\documentclass[reqno]{amsart}%{birkjour}%{article}% {amsart}

\usepackage{amsmath,amssymb,amsfonts,mathtools}
\usepackage{kbordermatrix}
\usepackage[english]{babel}
\usepackage{bbold}
\usepackage{bigints}
\usepackage{mathrsfs}
\usepackage{amsthm}

\newtheorem{theorem}{Theorem}[section]
\newtheorem{proposition}[theorem]{Proposition}
\newtheorem{lemma}[theorem]{Lemma}

\newtheorem{corollary}[theorem]{Corollary}
\theoremstyle{definition}
\newtheorem{definition}[theorem]{Definition}
\theoremstyle{definition}
\newtheorem{remark}[theorem]{Remark}
\theoremstyle{remark}

\theoremstyle{remark}

\theoremstyle{remark}

%Kostya'a commands

\newcommand{\OO}{\textrm{O}}
\newcommand{\oo}{\mathbf{0}}
\newcommand{\ii}{\mathbf{1}}
\newcommand{\I}{\textrm{I}}

\newcommand{\be}{\beta}
\newcommand{\bd}{\beta_\Diamond}
\newcommand{\ba}{\beta_\ast}

\newcommand{\V}{\mathbb V}

\newcommand{\cbb}{\mathbb c}
\newcommand{\sbb}{\mathbb s}

% Irina's commands

\newcommand{\mce}{\mathcal E}
\newcommand{\mcd}{\mathcal D}

\newcommand{\mco}{\mathcal O}

\newcommand{\beq}{\begin{equation}}
\newcommand{\eeq}{\end{equation}}
\newcommand{\bdima}{\begin{displaymath}}
\newcommand{\edima}{\end{displaymath}}

\def\R{\mathbb R}

%%%%%%%%%%%%%%%%%%%%%%%%%
%

\begin{document}

\title[On the stability of swarming systems]{On the Stability of Rotating States for Second-Order Self-Propelled Swarms} %Multi-Particle Systems

\author[C. Kolon]{Carl Kolon}
\email{carl@kolon.org}
\address{United States Navy}

\author[C. Medynets]{Constantine Medynets}
\email{medynets@usna.edu}
\address{Mathematics Department, United States Naval Academy, Annapolis, MD 21402}

\author[I. Popovici]{Irina Popovici}
\email{popovici@usna.edu}
\address{Mathematics Department, United States Naval Academy, Annapolis, MD 21402}

\thanks{The research of C.M. and I.P. was supported by Office of Naval Research Grant \# N0001421WX00045}

\keywords{Mills, Oscillators,  Stability, Center Manifold Approximations}
\subjclass[2010]{37B25, 37C75, 34D06, 34D35}

%%%%%%%%%%%%%%%%%%%%%%%%%%%%

%\begin{center}
%Stability of Rotating States for Self-Propelled Swarms with Quadratic Potentials

%C. Kolon, C. Medynets, I. Popovici
%\end{center}

\begin{abstract} %The study of emergent behavior of swarms is of great interest for applied sciences.
In this paper, we study the dynamics of a system of $n$ coupled, self-propelled particles: $\ddot r_k = (\alpha-\beta |\dot r_k|^2)\dot r_k - \frac{\gamma}{n}\sum_{m=1}^n(r_k-r_m)$, $r_k\in \mathbb R^2.$ Numerical experiments indicate that, for a large set of initial conditions, after an initial drift, the center of mass  converges to a stationary point, with each particle eventually rotating around it  with constant angular velocity. The distribution of particles on the circle need not be uniform. These limit configurations, where all particles rotate in the same direction, are termed {\it rotating states} . We prove that rotating states are stable and that every solution that starts sufficiently close, asymptotically approaches a rotating state, exponentially fast if $n$ is odd, or at a rate
that may be exponential or $\frac{1}{\sqrt t} $ if $n$ is even. The proof uses a new approximation
technique for the flow on the center manifold in the presence of non-isolated fixed points.
\end{abstract}

\maketitle

\section{Introduction}  The onset of synchronization or, more generally, emergence of collective patterns in multi-agent systems is a well-documented phenomenon spanning many applied and theoretical disciplines \cite{Kuramoto, KeefleStrogatzNatureCommunication, Pikovsky:Book}.
This note is devoted to the study of second-order systems of $n$ identical, self propelled particles that swarm in the plane, interacting through rotationally symmetric, quadratic  potentials $U(x)=\gamma|x|^2/2$. The dynamics is governed by
$\displaystyle
\ddot r_k=(\alpha -\beta |\dot r_k|^2)\dot{r}_k - \frac{1}{n}\sum_{m\neq k }\nabla U(r_k-r_m),$
where $\alpha, \beta, \gamma $ are positive constants. By choosing appropriate units of time and distance,  the equations can be normalized to:
\begin{equation}\label{eqnMainModel}
\begin{split}
\ddot{r}_k&=(1-|\dot r_k|^2)\dot{r}_k-\frac{ \mathrm{A}^2}{n}\sum_{m=1}^n (r_k-r_m),
\end{split}
\end{equation}
where  $\mathrm{A}^2 $ is the coupling strength. Let $\mathrm a= \frac{1}{A}.$ Equivalently,
\begin{equation}\label{eqnaMainModel}
\begin{split}
\ddot{r}_k&=(1-|\dot r_k|^2)\dot{r}_k-\frac{1}{\mathrm{a}^2} (r_k-R)
\end{split}
\end{equation}
where $R = \frac{1}{n}\sum_{m=1}^nr_m $ is the center of mass of the system. Here, $r_k=(x_k,y_k)\in \R^2,$  or interchangeably $r_k=x_k+iy_k \in \mathbb C$, represents the two-dimensional position vector of the $k$-th particle and $\dot r_k$ denotes the time derivative $\frac{dr_k}{dt}$. We refer the reader to \cite{Mach:2006,Szwaykowska:2016,Ebeling:2001} for various applications.
% of System (\ref{eqnMainModel}).

Mathematical models of multi-agents whose dynamics is very simple if acting in isolation, but develop complex patterns or structures when coupled, go as far back as 1951  \cite{Turing:1952}. %The earlier models stem from the field of cellular biology; their coupling is linear, quantifying the diffusion of various morphogens or enzymes past the membranes of neighboring cells \cite{Turing:1952,Smale:1976}.
These models highlight a paradoxical aspect in the evolution of multi-agent dynamical systems: otherwise ``dead'' cells (i.e. convergent to a stable fixed point, \cite{Smale:1976}) turn alive (pulsating or oscillating) due to their interactions.

If  the agents in Equation (\ref{eqnMainModel}) are isolated from each other ($\mathrm{A}=0$), they maintain their initial directions of motions, along $\dot r_k(0)$ , while their speeds converge to one, leading to agents dispersing.
Coupling  fundamentally changes the dynamics:
agents remain within  bounded distance from the center of mass
( \cite{MedynetsPopovici:2021}), with  the center of mass itself having  bounded range or escaping to infinity.
Figure \ref{FigureRingState} illustrates oscillatory  patterns in swarms whose center of mass has bounded motion.

\begin{figure}[h!]
\centering
\includegraphics[width=.6\textwidth]{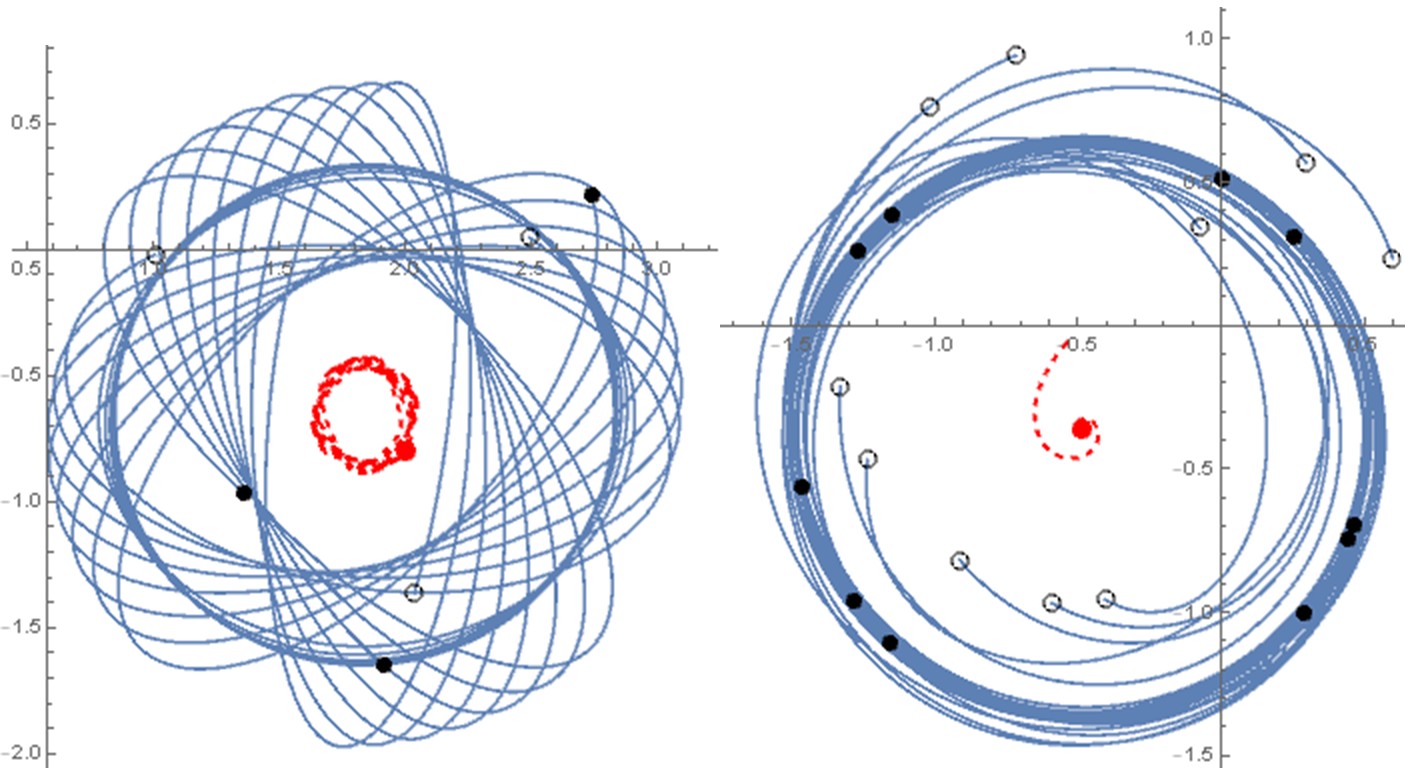}%{ringState.png}% {media/ringState}
\caption{\label{FigureRingState}
Limit patterns for (1), with $\mathrm A=1,  n=3$ (left) and $n=10$ (right).
Left: All agents and the center of mass exhibit sustained oscillations.
Agent positions at $t = 0$ (hollow dots) and at $t = 50$ (solid dots) are shown, with the center of mass marked by a red dot.
%The initial positions are $\bar r(0) = [(2.04, -1.36), (2.49, 0.05), (1, -0.02)]$, and the initial velocities are $\dot{\bar r}(0) = [(0.34, 1.16), (0.77, -0.51), (-0.76, -0.34)]$.
Right: The center of mass $R(t)$ (dashed) becomes nearly stationary. Agents stabilize into uniform circular motion
centered at $R(t)$ with relative phases ${\theta_k}$ satisfying $\sum_{k=1}^{n} e^{i\theta_k} = 0$. }
\end{figure}

Numerical simulations have captured two seemingly stable limit patterns for (\ref{eqnMainModel}): circular configurations in which the agents eventually rotate around a stationary center of mass,  $\mathrm r_k(t) =\frac{1}{\mathrm A} e^{ \pm i \mathrm A t} e^{ i\theta_{0,k}}+\mathrm R_0$    (Figure \ref{FigureRingState}, Right),  and rectilinear  motion with unit speed, $r_k(t)=v_0 t+d$, %$v_0, d\in \mathbb R^2$,
the latter proven stable in \cite{Popovici}.

\begin{definition}
%\label{def_rotating}
A solution %$\displaystyle \left ( \mathbb r_k (t) \right )_{k=1..n}$
for (\ref{eqnMainModel}) is called a rotating state if there exist angles $\theta_{0,k}$  with $\displaystyle \sum_{k=1}^n e^{i\theta_{0,k}}=0$ and a constant $ R_0 \in \mathbb C$ such that
$ \mathbb r_k (t) = \frac{1}{\mathrm A} e^{i\mathrm A t} e^{i\theta_{0,k}} +R_0$ for all $t$ and all $k.$
We call a rotating state {\it degenerate}, possible only when $n$ is even, if there exist  $\theta_*\in [0,2\pi)$ and $\phi_j \in \{ 0,\pi\}$ such that $\theta_j = \theta_* + \phi_j$; that is, the particles form two equal-sized diametrically opposite groups rotating about $R_0$. %Degenerate rotating states can  arise if $n$ is even.

A solution
$ \mathbb r_k (t) = \frac{1}{\mathrm A}e^{-it\mathrm A }e^{i\theta_{0,k}} + R_0 $ for all $t$ and all $k$ is called a clockwise-rotating state.
\end{definition}

Observe that the set of solutions of (\ref{eqnMainModel}) is
 translation invariant, rotation invariant, and
  invariant with respect to the reflections about the coordinate axes, that is, with respect to the transformations $y\mapsto -y$ and $x\mapsto -x$.
  Thus, if $(\mathrm r_1, \dots \mathrm r_n)$  is a solution of the system, then $(e^{i\theta}\mathrm r_1, \dots e^{i\theta}\mathrm r_n) $ and $(\mathrm r_1+c, \dots \mathrm r_n+c)$ are also solutions for any $\theta\in\mathbb R$ and any $c\in \mathbb C $.
In this paper we only discuss the case of counterclockwise motion;
%$\mathrm r_k(t) =\frac{1}{\mathrm A}  e^{ i \mathrm A t +i\theta_{0,k}}+\mathrm R_0$;
clockwise motion follows from the system's symmetry with respect to reflection over the $x$-axis.

Some authors, but not all, use the term mill or ring for a rotating state; most authors define a mill as a rotating state whose angles $\theta_{0,k}$ are uniformly distributed in $[0, 2\pi]$, a.k.a having rotational symmetry.

 We emphasize that we make no symmetry or uniform-distribution assumptions. A
 substantial part of the analysis addresses configurations where agents split into two clusters modeled by systems with non-isolated fixed points. A contribution of our paper is a novel approximation technique for center manifolds  in this setting.

The paper gives a  rigorous proof of the stability of rotating states of (\ref{eqnMainModel}), and of the rates of convergence: exponential for $n$ odd, and exponential or of order $\frac{1}{\sqrt t}$ for $n$ even.

\begin{theorem}\label{TheoremMainResultIntro} Every rotating state solution of Equation (\ref{eqnMainModel}) is stable:
for any $\epsilon >0$ there exists $\delta >0$ such that given any initial center of mass location $\mathrm R_0$ and polar angles $\theta_{0,k}$ with
 $\sum _{k=1}^n e^{i\theta_{0,k}}=0$, if a solution $\{\mathrm r_k(t)\}$ of  (\ref{eqnMainModel}) satisfies
 $|\mathrm r_k (0)-\frac{1}{\mathrm A} e^{i\theta_{0,k}}|< \delta $ and  $|\dot{\mathrm r}_k (0)-i e^{i\theta_{0,k}}|< \delta $,
 then $$ |\mathrm r_k(t)-(\mathrm R_0+\frac{1}{\mathrm A}e^{i\theta_{0,k}}e^{i\mathrm A t})|<\epsilon\mbox{ and }  | \dot r_k(t)-ie^{i\theta_{0,k}}e^{i\mathrm A t}|<\epsilon\mbox{ for all }t\geq 0.$$
Furthermore, every solution that starts near a rotating state converges to a nearby rotating state:
 there exist $\mathrm R_{\infty}$ and  $\theta_{\infty , k}$ with $\sum _{k=1}^n e^{i\theta_{\infty,k}}=0$ such that
 $$\lim _{t \to \infty} \left [  \mathrm r_k(t)-(\mathrm R_{\infty} + \frac{1}{\mathrm A} e^{i\theta_{\infty,k}}e^{i\mathrm A t}) ) \right ] =0 \mbox{ and }
\lim _{t \to \infty} \left [ \dot{\mathrm r}_k(t)-ie^{i\theta_{\infty,k}}e^{i \mathrm A t} \right ] =0. $$
\end{theorem}

System (\ref{eqnMainModel})  belongs to the class of second-order gradient systems $\ddot r = f(\dot r) - \nabla U(r)$ (\cite[Ch. 7]{Haraux:book} and \cite{Haraux:1998}).  In those works it is shown that under some conditions on $f$ and the potential function $U$, most notably $f(\dot r)\cdot \dot r<0$ if $\dot r\neq 0$, every bounded solution converges to a configuration that solves $\nabla U(r)=0$.  Since the term $(1-|\dot r_k|^2)\dot{r}_k\cdot \dot{r}_k$ in (\ref{eqnMainModel}) is not negative-definite,  these results do not apply.

An in-depth discussion  of the existence of a global attractor and the synchronization
of  dissipative systems with {\it strong} coupling can be found in \cite{Hale:1997}. In Section 3 of \cite{Hale:1997}, it is proven that  1-dimensional Duffing oscillators with
sufficiently strong, nearest-neighbor, diffusive coupling synchronize.% exceeding a large coupling  threshold for the coupling is essential in \cite{Hale:1997}.

If the motion in (\ref{eqnMainModel}) is constrained to a line, switching to velocity-acceleration coordinates, yields
 a system of coupled Van der Pol equations.  Such systems have been shown to admit nontrivial periodic cycles \cite{Rybicki:2003}.

If the agents in (\ref{eqnMainModel})  maintain a stationary  center of mass (referred to as a {\it decoupled system}), their dynamics reduces to $\ddot r_k = (1-|\dot r_k|^2)\dot r_k-\mathrm A^2 r_k$, a generalized Li\'enard system.  If the motion is further restricted to the $x$-axis (or to any other line through the origin), the equation $\ddot x- \dot x+(\dot x )^3 +\mathrm A^2 x =0$  admits a unique attracting cycle (\cite{Lienard:1928}, \cite[Theorem 4.6]{Verhulst:Book}). However, planar configurations in which agents  pulsate along individual lines through  a fixed center are unstable;
perturbations of velocity off the initial line of motion destabilize the configuration.

Oscillator models are often compared  to the Kuramoto's. Unlike Kuramoto oscillators, the  spatial angles $\tan ^{-1}(y_k/x_k)$ in (\ref{eqnMainModel}) are coupled not only through mutual interaction, but  also via spatial variables $r_k$. This places the model  (\ref{eqnMainModel}) in the class of swarmalators,  \cite{KeefleStrogatzNatureCommunication, Pikovsky:Book}. Studies of swarmalators with uniform phase distribution and specific fast-decaying coupling can be found in \cite{KeefleStrogatzNatureCommunication, PhysRevE.98.022203,  Pikovsky:Book}.

The limit behavior of swarms  whose particles are {\it equally distributed} on a circle (i.e. ring mills) have been numerically analyzed extensively, using both individual-based and continuum models, see, for example, \cite{DorsognaChuangBertozziChayes:2006}, \cite{CHUANG200730}, \cite{AlbiBalagueCarrilloBrecht:2014} for self-propelled particles, and \cite{Kolokolnikov:2011} for the steady state pattern formation. There are very few results that relax the equally-distributed  assumption: \cite{CarrilloSmallKer} addresses the stability of flocks satisfying certain hyperbolicity assumptions,  but for System (\ref{eqnMainModel}) those assumptions are only satisfied for $n=2$ agents: more specifically, if $n\geq 3,$  the translating states and the rotating states of (\ref{eqnMainModel})  have more neutral directions than what Condition H3 of   \cite{CarrilloSmallKer} requires; \cite{HaHaKim, Lear, Popovici} investigate flocking (rather than rotating) states.

\subsection{Configurations with stationary center of mass.}

This section analyzes the motion of agents if $R(t)=R(0)$ for all $t$, for $\mathrm A=1$, as a step toward   understanding swarms whose asymptotic configurations have a stationary center of mass. By translation invariance of (\ref{eqnMainModel}), we set
$R(0)=0$.

For example, $R(t)=0$ for all $t$  can occur when the  $n$ agents can be partitioned into subsets with rotational symmetries. The agents $\{ k_1, k_2, \dots k_p \}$ have rotational symmetry if
$ \displaystyle \mathrm r_{k_m}(t)= e^{(m-1)\frac{2\pi}{p}i} \mathrm r_{k_1}(t)$   for $ m=1,\ldots, p, \; t\geq 0. $

 \begin{figure}[h!]
\centering
\includegraphics[width=.6\textwidth]{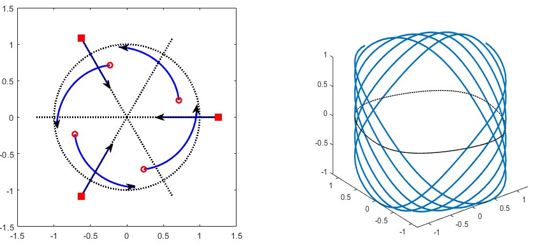}
%{media/SevenParticlesWProjection}
\caption{\label{FigureSevenParticlesPartition} An illustration of a 7-agent, 28-dimensional system partitioned into a set of three 1-dimensional Van der Pol  oscillators (the square markers) and four rotating agents (the hollow markers), having  $R(t)=(0,0)$ for all $t$.  Left:  Agent trajectories from $t = 0$ to $t = 1.5$ are shown as solid curves; their corresponding limit cycles are indicated by dashed curves.
Right:  projection of the curve $(x_1(t), x_2(t), \dots \dot y_7(t))\subset \mathbb R^{28}$ onto
%a 2-dimensional subspace (the dotted line) and on a 3-dimensional subspace (solid line). The subspaces used are $(x_1, \dot x_1) $ and
the 3-dimensional subspace $(x_1, x_4, \dot x_1).$  }
\end{figure}

When $R=0,$ agents' motions decouple, each satisfying:
\begin{equation}\label{EqnDecoupledSystem}\ddot{r}=(1-|\dot{r}|^2)\dot{r}-r\end{equation}

Consider a nonzero solution $r$ of (\ref{EqnDecoupledSystem}). If the vectors $r$ and $\dot{r}$  are parallel at some instant,  then they remain parallel and  can be written as $r(t)=c(t)\hat{r}$  and $\dot r(t)=\dot c(t)\hat{r}$ for some constant unit vector $\hat{r}$ and a scalar function $c$.  Then $c(t)$ satisfies  $\ddot c = \dot c-\dot c ^3-c$,
 a Li\'{e}nard equation (\cite{Lienard:1928}, \cite{Lins:1977}), with  a unique, stable limit cycle.
 %The zero solution of (\ref{EqnDecoupledSystem}) is unstable.
Next, we examine trajectories of (\ref{EqnDecoupledSystem}) where position and velocity vectors are initially non-parallel.

\begin{theorem}\label{TheoremDecoupledCase}
 Let $r$ be a solution of (\ref{EqnDecoupledSystem}) and let  $\phi$ denote  the angle from $r$ and $\dot r$, measured counterclockwise.  If $0<\phi<\pi$  at some instant, then  $r(t)$ converges to a limit cycle ${\mathrm r}_+(t) = e^{i\theta_*}e^{it}$ for some phase angle $\theta_*$. If  $-\pi<\phi <0$, then $r(t)$ converges to a cycle  $\mathrm{r}_-(t) = e^{i\theta_*}e^{-it}.$  The limit cycles ${\mathrm r}_+$ and ${\mathrm r}_-$ are stable.
\end{theorem}
\begin{proof} Setting $r=(x,y)$, $u = \dot x$, and $v = \dot y$, we can rewrite (\ref{EqnDecoupledSystem}) as
\begin{equation}\label{eq:4dsys}
\begin{array}{ll}
%\dot{x}&=u,\\
%\dot{y}&=v,\\
\dot{u}&=(1-u^2-v^2)u-x,\\
\dot{v}&=(1-u^2-v^2)v-y.
\end{array}
\end{equation}
Along trajectories of (\ref{eq:4dsys}), the derivative of $vx-uy$  is equal to $\left(1-u^2-v^2\right) (v x-u y)$. Thus, if $vx=uy$ at any time,
 then the vectors $r$ and $\dot{r}$ remain parallel (or zero),  and the system reduces to the one-dimensional case. We therefore focus on the region %$vx\neq uy$, which we call $\Omega$:
$\Omega=\{(u,v,x,y)\in\mathbb{R}^4:vx\neq uy\}.$
Note that the subsets $\Omega_+$ and $\Omega_-$ of $\Omega$ satisfying $vx-uy >0$ and $vx-uy <0$, respectively, are invariant.

Define the function $L:\Omega\rightarrow\mathbb{R}$ as follows:
\begin{equation*}L=x^2+y^2+u^2+v^2-\log((vx-uy)^2)-2\end{equation*}
From $  (vx-uy)^2 = (x^2+y^2)(u^2+v^2)-(xu+yv)^2) \leq (x^2+y^2)(u^2+v^2) $ we get
\begin{equation*}
L\geq  (x^2+y^2-\log(x^2+y^2)-1)+(u^2+v^2-\log(u^2+v^2)-1).
\end{equation*}
Since $t-\log(t)\geq 1$ for all $t>0$, the minimum of $L$ is 0,  attained on $\Omega_{\min}=\{(u,v,x,y)\in\Omega:(u^2+v^2=1),(x^2+y^2=1),(xu+yv=0)\}$.
Moreover, if $(x^2+y^2)\rightarrow\infty$ or $(u^2+v^2)\rightarrow\infty$, we have $L\rightarrow\infty$, so $L$ is radially unbounded.

Differentiating $L$ along trajectories of (\ref{eq:4dsys}) yields
$\dot{L} = -2(1-u^2-v^2)^2\leq 0,$
 thus $L$ is decreasing.
By LaSalle's Invariance Principle, every trajectory approaches the largest invariant set inside $\Omega_{\min}$. The trajectories fully contained in $\Omega_{\min}$ satisfy $\ddot x+x=0$ and correspond to solutions $c e^{it}$ and $ce^{-it}$, with $c\in \mathbb C$, $|c|=1$.

Thus, if $0<\phi<\pi$, then the solution $\{r,\dot r\}$ lies in $\Omega_+$ and converges to the limit cycle   $\mathrm{r}_+$. Similarly, solutions for which $-\pi<\phi<0$  approach a limit cycle $\mathrm{r}_-$.
\end{proof}

\subsection{A brief outline of the paper.}

Sections \ref{SectionRingStateChangeOfCoordinates}, \ref{SectionNonDegenerateRingState},  and \ref{SectionDegenerateRingState} are dedicated to the proof of Theorem \ref{TheoremMainResultIntro}.

In Section \ref{SectionRingStateChangeOfCoordinates} we introduce  new coordinate systems based on  rotating frames of reference. We define $\{X_k,Y_k\}_{k=1, \dots n} $ by setting $\mathrm r_k(t)=\frac{1}{\mathrm A}e^{i \mathrm A t}(X_k(\mathrm At)+i Y_k(\mathrm At))$, and establish that the set of equilibrium points in the $\{X_k,Y_k\}$ coordinates corresponds to the set of rotating states {\it centered at the origin} in the original coordinates.

 In Section \ref{SectionNonDegenerateRingState} we present the proof of Theorem \ref{TheoremMainResultIntro} for the case of a non-degenerate rotating state.
We provide a  semi-explicit description of the center manifold as  the  product of a $(n-2)$-dimensional sub-manifold of fixed points (given implicitly) and a plane parametrized by elliptical trajectories.

In Section \ref{SectionNotTaylor} we highlight a key limitation of Taylor approximations of the center manifold in systems with non-isolated fixed points: truncating the vector field can destroy the degeneracies that cause the nullclines to intersect, thereby  eliminating  equilibrium points. We present a new approximation scheme  that  turns having a large set of equilibrium points into a computational advantage: the fixed points are used to anchor the approximation of the center manifold.

 Section \ref{SectionDegenerateRingState} is devoted to the perturbations of degenerate rotating states.
 Using the technique developed in Section \ref{SectionNotTaylor}, we determine the map of the center manifold, now of dimension $n+1$, and prove that the flow on it is stable, with all the limiting dynamics corresponding to rotating states. We show that the convergence to a degenerate configuration has a rate of order $\frac{1}{\sqrt t} $, in contrast to the exponential convergence for non-degenerate limit cycles.

The Appendix includes Mathematica commands for  calculating the determinants, characteristic polynomials, and inverse matrices from Lemma \ref{LemmaNonDEigenspace} and (\ref{NewBlocks}).

%%%%%%%%%%%%%%%%%%%%%%%%%%%%%%%%%%%%%%%%%%%%%%%

% Section: CHANGE of COORDINATES
\section{Change of Coordinates about a Given Rotating State}
\label{SectionRingStateChangeOfCoordinates}

In this section, we introduce  two-step coordinate changes in which the original rotating states of (\ref{eqnMainModel}) {\it centered at the origin}  become fixed points.

 First we define the {\it rotating frame}  $\{X_k,Y_k\}$ as
 \beq
 \label{defineXY}
 \mathrm r_k(t)=\frac{1}{\mathrm A}e^{i\mathrm A t}(X_k(\mathrm A t)+i Y_k(\mathrm A t)).
\eeq
Next,
we fix a rotating state $\left(  \frac{1}{\mathrm A}e^{i \mathrm A t}e^{i\theta_{0 k}} \right )_{k=1}^n$ and
we apply rotations that {\it depend on the parameters} $\theta_{0 k}$
and translations, so that the state $\left(  \frac{1}{\mathrm A}e^{i \mathrm A t}e^{i\theta_{0 k}} \right )_{k=1}^n$
is mapped  to the origin.

Recall the notation $ \mathrm a= \frac{1}{\mathrm A}$. From (\ref{defineXY}) and (\ref{eqnaMainModel}) we get that $X_k+iY_k$ satisfy
 \footnote{The derivation of (\ref{eqnXYEquilibriumSystem}) follows step by step that in (\ref{eqnChangeCoordinates_ab}) as if all $\theta_{0k}$ were all zero.}
\begin{equation}\label{eqnXYEquilibriumSystem}
\begin{array}{l}
X_k(t)+iY_k(t) = \mathrm A e^{-i t } r_k(\mathrm a t ),
\\
\ddot X_k  = 2 \dot Y_k  + \mathrm a\left (1-(\dot X_k-Y_k)^2-(X_k +\dot Y_k)^2 \right )(\dot X_k - Y_k) +  X_{mean},
\\
\ddot Y_k  =    -2 \dot X_k + \mathrm a \left (1-(\dot X_k-Y_k)^2-(X_k +\dot Y_k)^2 \right )(X_k+ \dot Y_k) + Y_{mean},
\end{array}
\end{equation}
where $ X_{mean} = \frac{1}{n}(X_1+\cdots + X_n)$ and $ Y_{mean} = \frac{1}{n}(Y_1+\cdots + Y_n)$.
Our next result describes the set of equilibrium points for (\ref{eqnXYEquilibriumSystem}).
 \begin{lemma}\label{LemmaEquilibriumSolutions} A point $(X_{0k},Y_{0k},\dot X_{0k}, \dot Y_{0k})$, $k=1,\ldots, n$, is an equilibrium point of (\ref{eqnXYEquilibriumSystem})  if and only if (i) $\dot X_{0k} = 0$, $\dot Y_{0k}=0$ for all $k$,  (ii) $ \frac{1}{n} \sum \limits _{k=1}^n X_{0k} =\frac{1}{n} \sum \limits_{k=1}^n Y_{0k} = 0$ and (iii) for all $k$, either $X_{0k}=Y_{0k} = 0$ or $X_{0k} = \cos(\delta_k)$, $Y_{0k} = \sin(\delta_k)$ for some constants $\delta_k$.
 \end{lemma}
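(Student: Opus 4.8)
The plan is to treat (\ref{eqnXYEquilibriumSystem}) as a first-order system on $\R^{4n}$ in the phase variables $(X_k,Y_k,\dot X_k,\dot Y_k)$ and to write down what it means for its vector field to vanish. The equations expressing $\dot X_k$ and $\dot Y_k$ as velocities force $\dot X_k=\dot Y_k=0$ immediately, which is (i). Substituting this into the right-hand sides of (\ref{eqnXYEquilibriumSystem}), noting that $\dot X_k-Y_k$ collapses to $-Y_k$ and $X_k+\dot Y_k$ to $X_k$, and setting $\ddot X_k=\ddot Y_k=0$, the problem reduces to the finite algebraic system
\[
\bar X = \lambda_k Y_k, \qquad \bar Y = -\lambda_k X_k, \qquad k=1,\dots,n,
\]
where $\lambda_k := 1 - X_k^2 - Y_k^2$. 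Everything then follows from analyzing these $2n$ scalar equations together with the defining relations $\bar X = \tfrac1n\sum_k X_k$ and $\bar Y = \tfrac1n\sum_k Y_k$.

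The key observation is that one can eliminate the unknown factors $\lambda_k$ by pairing each equation with the matching component of $(X_k,Y_k)$: multiplying the first equation by $X_k$, the second by $Y_k$, and adding yields
\[
\bar X\, X_k + \bar Y\, Y_k = 0 \qquad \text{for every } k,
\]
i.e. the fixed vector $(\bar X,\bar Y)$ is orthogonal to each $(X_k,Y_k)$. Averaging this identity over $k$ and using the definitions of $\bar X$ and $\bar Y$ converts the right-hand side $0$ into $\bar X^2+\bar Y^2$, so $\bar X=\bar Y=0$; this is (ii), obtained without any case distinction. Feeding $\bar X=\bar Y=0$ back into the two displayed equations leaves $\lambda_k X_k = \lambda_k Y_k = 0$ for each $k$: if $\lambda_k\neq 0$ then $X_k=Y_k=0$ (and then $\lambda_k=1$, consistently), whereas if $\lambda_k=0$ then $X_k^2+Y_k^2=1$, so $(X_k,Y_k)=(\cos\delta_k,\sin\delta_k)$ for some $\delta_k$. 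This is exactly the dichotomy in (iii).

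For the converse I would simply verify that any configuration satisfying (i)--(iii) is a zero of the vector field: the first-order equations hold because $\dot X_k=\dot Y_k=0$; in the acceleration equations every term carrying the factor $1-(\dot X_k-Y_k)^2-(X_k+\dot Y_k)^2=\lambda_k$ vanishes because either $\lambda_k=0$ or $X_k=Y_k=0$, and the mean-field terms $\bar X,\bar Y$ vanish by (ii).

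I do not expect a genuine obstacle here, since the statement is purely algebraic once the reduction is made. The only points needing a little care are fixing the convention that an ``equilibrium of (\ref{eqnXYEquilibriumSystem})'' means a zero of the induced first-order field on $\R^{4n}$ (so that the velocities are truly forced to be zero), and keeping the signs straight when the velocities are set to zero. The elimination-by-inner-product step is the one I would highlight in the write-up: it is what makes the mean-field constraint $\bar X=\bar Y=0$ fall out cleanly, rather than through a more laborious analysis of which of the $\lambda_k$ vanish.
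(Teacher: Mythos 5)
Your proof is correct, and while the overall framework matches the paper (set velocities to zero, reduce to the algebraic relations $\bar X=\lambda_k Y_k$, $\bar Y=-\lambda_k X_k$ with $\lambda_k=1-X_k^2-Y_k^2$, establish $\bar X=\bar Y=0$, then extract the dichotomy), your method for forcing $\bar X=\bar Y=0$ is cleaner than the paper's. The paper squares and adds to get $(\bar X)^2+(\bar Y)^2=(1-s_k)^2 s_k$ with $s_k=X_k^2+Y_k^2$, assumes $s:=(\bar X)^2+(\bar Y)^2>0$ for contradiction, rules out $s_k=1$, divides by $(1-s_k)$ to express $X_k,Y_k$ in terms of $\bar X,\bar Y$, sums over $k$, and derives $s=0$. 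Your inner-product elimination $\bar X X_k+\bar Y Y_k=\lambda_k X_kY_k-\lambda_k X_kY_k=0$, followed by averaging in $k$ to land directly on $\bar X^2+\bar Y^2=0$, dispenses with the division by $(1-s_k)$, the side case $s_k=1$, and the contradiction scaffolding entirely. What the paper's route buys is an explicit closed-form expression of $X_k,Y_k$ in terms of $\bar X,\bar Y$ that they never subsequently use; what yours buys is brevity and no case splits. Both then finish identically with the dichotomy $\lambda_k=0$ versus $X_k=Y_k=0$ and a direct verification of the converse. (Incidentally, your signs in $\bar X=\lambda_k Y_k$, $\bar Y=-\lambda_k X_k$ are the correct ones; the paper's intermediate display of these relations has the signs flipped, a harmless typo since they only use the squared identity.)
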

 Note that the fixed points of (\ref{eqnXYEquilibriumSystem}), seen in the original coordinates, correspond to having a juxtaposition between $p$ particles placed at the origin and $N-p$ particles rotating about the origin.

\begin{proof} For convenience, we are dropping the zero subscripts.
In (\ref{eqnXYEquilibriumSystem}), setting $\dot X_k =\ddot X_k= 0$, $\dot Y_k=\ddot Y_k=0$, and solving for $ X_{mean} $ and $ Y_{mean} $, we obtain that for all $k$
\begin{equation}\label{EqnXYAux}
 X_{mean}  = -\mathrm a(1-X_k^2-Y_k^2)Y_k   \mbox{ and }  Y_{mean}  = \mathrm a(1-X_k^2-Y_k^2)X_k .
\end{equation}

It follows that
 \begin{equation}\label{EqnXYCenterMass}(X_{mean})^2 + (Y_{mean})^2 = \mathrm a^2  (1-(X_k^2+Y_k^2))^2(X_k^2+Y_k^2),\quad k=1,\ldots,n.
\end{equation}

Set $s = (X_{mean})^2+(Y_{mean})^2$.  We claim that $s=0$. Indeed, assume towards contradiction that $s>0$. Consider the function $f(t) = \mathrm a ^2 (1-t)^2t$ for $t\geq 0$. Then each value $s_k = X_k^2+Y_k^2$ is a solution of the equation $f(s_k) =s$.  Note that $s_k\neq 1$ for otherwise $s=f(s_k)=0$. Solving (\ref{EqnXYAux}) for $X_k$ and $Y_k$, we get that
\bdima
X_k = \frac{Y_{mean}}{\mathrm a (1-s_k)}\mbox{ and } Y_k = -\frac{X_{mean}}{\mathrm a(1-s_k)}.
\edima
It follows that
\bdima
X_{mean} = \frac{1}{n}\sum_k\frac{Y_{mean}}{\mathrm a (1-s_k)}\mbox{ and } Y_{mean} = -\frac{1}{n}\sum_k\frac{X_{mean}}{\mathrm a(1-s_k)}.
\edima
Multiplying both sides of the first equation by $X_{mean}$ and the second equation by $Y_{mean}$ and then adding them together, we obtain that $s=(X_{mean})^2+(Y_{mean})^2 = 0$, which is a contradiction.

Thus $s=0$. It follows from  Equation (\ref{EqnXYCenterMass}) that either $X_k^2+Y_k^2=0$ or $X_k^2+Y_k^2=1$. Denote by $I$ the set of indices $k$ such that $X_k^2+Y_k^2=1$. For $k\in I$, choose $\delta_k\in [0, 2\pi]$ such that $(X_k,Y_k) =  (\cos(\delta_k),\sin(\delta_k)). $ Note that $\sum_{k\in I} e^{i\delta_k}=n(X_{mean}+ iY_{mean})=0$. Conversely, we notice that any family $(X_k,Y_k)$ of constant functions such that $X_k=Y_k=0$ or $X_k=\cos(\delta_k)$, $Y_k = \sin(\delta_k)$ with $X_{mean} =  Y_{mean} = 0$ is an equilibrium solution for (\ref{eqnXYEquilibriumSystem}).  The result now follows.

\end{proof}

The rotating state $\left ( \frac{1}{\mathrm A}e^{i \mathrm A t}e^{i\theta_{0 k}}\right )_{k=1}^n$ corresponds to the fixed point
\newline $\left ( X_{0k}=\cos\theta_{0 k}, Y_{0k}=\sin \theta_{0 k}\right ) _{k=1}^n$ in the new frame.
Together, the rotating states about the origin of (\ref{eqnMainModel}) correspond to the fixed points of (\ref{eqnXYEquilibriumSystem}) that belong to the  set $ \displaystyle \mathcal F=\bigcap _{k=1}^n \{X_k^2+Y_k^2=1\}$. The $\epsilon$-$\delta$ formulation of Theorem \ref{TheoremMainResultIntro} when $R_0=0$ is equivalent the  stability of the fixed points of (\ref{eqnXYEquilibriumSystem})  from $\mathcal F.$

The second step of the coordinate change is dependent on the parameters of the rotating state.

Fix a  rotating state {\it $\mathrm{r}_k =\frac{1}{A} e^{iA t}e^{i\theta_{0 k}}$}.  Notice that $\sum_{k=1}^ne^{i\theta_{0 k}} = 0$. Shifting the phases by the same angle $\theta^*$ gives a new rotating state $\mathrm r_k = \frac{1}{A}e^{iAt}e^{i(\theta_{0 k}+\theta^*)}$ with the same stability characteristics.
We can always find $\theta^*$ such that $\displaystyle \sum_k \sin(\theta_{0 k}+\theta^*)\cos(\theta_{0 k}+\theta^*)=0$ since $\sum_k \sin(\theta_{0 k}+\theta^*)\cos(\theta_{0 k}+\theta^*)  =c_1\cos(2\theta^*)+ c_2\sin(2\theta^*) $ for
$c_1=\frac{1}{2}\sum_k \sin(2\theta_{0 k})$
 and $c_2 =\frac{1}{2}\sum_k \cos(2\theta_{0 k})$,
and for any real numbers $c_1$ and $c_2$, the equation $c_1\cos(2\theta^*)+c_2\sin(2\theta^*)=0$ always has a solution $\theta^*$.
 Thus,  working with phase-shifted rotating states if necessary,  we can additionally assume that the polar angles $\{\theta_{0 k}\}$ of the rotating state  satisfy:
\begin{equation}\label{EqOrthogonality}
  \sum_{k=1}^n\cos(\theta_{0 k})\sin(\theta_{0 k}) = 0.
\end{equation}
%%%%%%%%%%%%%%%%%%%%%%%%%%%
%
%
% Change of coordinates

% {\bf Change of Coordinates.}
\begin{definition}
 Consider the following {\it change of coordinates}
\begin{equation}
\label{DefineChangeCoordinates_ab}
\mathrm r_k(t)=\frac{1}{A}e^{iA t}e^{i \theta_{0 k}}\big [(a_k(At)+1) + i b_k(At) \big ],\mbox{ where }a_k(t), b_k(t)\in \mathbb R. \end{equation}
Equivalently, the functions $a_k, b_k $ can be defined as
\beq
\label{affine}
\begin{array}{l}
a_k +i\;  b_k=  e^{-i\theta_{0 k }}( X_k+i Y_k) -(1+0i)\; \mbox{ or as}\\
\mathrm r_k(\mathrm a t)=\mathrm a e^{i t}e^{i \theta_{0 k}}\big [(a_k(t)+1) + i b_k(t) \big ].
\end{array}
\eeq
Let  $u_k = \dot a_k$  and $w_k = \dot b_k.$ Denote by $\bar a$ the column vector $(a_1,\ldots, a_n)^T$. The vectors $\bar b$, $\bar u$, $\bar w$ are defined analogously.
\end{definition}

\begin{proposition}
The conversion of the swarm equations (\ref{eqnMainModel}) to  the $(\bar a, \bar b, \bar u, \bar w)$ coordinate system is:
 \begin{equation}\label{eqnNewSystem_abpq_coord}
 \begin{split}
 \dot a_k &= u_k \\
 \dot b_k &= w_k \\
\dot u_k &= \begin{multlined}[t]
 2w_k
  +\frac{1}{n}\sum_{m=1}^n \cos(\theta_{0 m}-\theta_{0 k})a_m-\frac{1}{n}\sum_{m=1}^n \sin(\theta_{0 m}-\theta_{0 k})b_m\\
  -\mathrm a \big[ (u_k-b_k)^2+2(a_k+w_k)+(a_k+w_k)^2 \big](u_k-b_k)
\end{multlined} \\
\dot w_k &= \begin{multlined}[t]
-2u_k+\frac{1}{n}\sum_{m=1}^n \sin(\theta_{0 m}-\theta_{0 k})a_m+\frac{1}{n}\sum_{m=1}^n \cos(\theta_{0 m}-\theta_{0 k})b_m\\
-2\mathrm  a\;  a_k-2 \mathrm a \; w_k- a \big[(u_k-b_k)^2+(a_k+w_k)^2 \big] \\
 - \mathrm a \big[(u_k-b_k)^2+2(a_k+w_k)+ (a_k+w_k)^2 \big](a_k+w_k).
\end{multlined}
 \end{split}
 \end{equation}
\end{proposition}

\begin{proof}
Substituting time $(\mathrm a t)$ and multiplying by $\mathrm a $ in  (\ref{eqnMainModel}) gives
 \beq
 \label{parameterabuw}
  \mathrm a \; \ddot r_k(\mathrm  a t)+\frac{1}{\mathrm a}r_k(\mathrm  a t)=
\mathrm a (1-|\dot r_k(\mathrm a t)|^2)\; \dot{r}_k(\mathrm a t)+  \frac{1}{n}\sum_{m=1}^n \frac{1}{\mathrm  a}r_m(\mathrm a t ). \eeq
We express the left and right-hand-sides of this equation in terms  of $a_k(t)$ and $ b_k(t)$, starting from
$\displaystyle r_k(\mathrm  a t)=\mathrm  a\;  e^{it} e^{i\theta_{0k}} \left[(a_k+1)+ib_k\right ]$:
\begin{equation}\label{eqnChangeCoordinates_ab}
\begin{array}{l}
\dot { r}_k(\mathrm a t) = e^{i t}e^{i \theta_{0 k}} \big[ i\left ((a_k+1) +i b_k \right )  +(\dot{a}_k+i\dot b_k) \big] \\
\mathrm a \; \ddot { r}_k(\mathrm a t) = e^{i t}e^{i \theta_{0 k}}\big[-\left ((a_k+1)+ib_k \right )+ 2( i \; \dot a_k -\dot b_k)+
(\ddot{a}_k+i\ddot b_k)  \big]\\
\mathrm a \; \ddot r_k(\mathrm  a t)+\frac{1}{\mathrm a}r_k(\mathrm  a t)=e^{i t}e^{i \theta_{0 k}}\big[ 2( i \; \dot a_k -\dot b_k)+(\ddot{a}_k+i\ddot b_k)  \big].
\end{array}
\end{equation}
 Note that $|\dot {r}_k(\mathrm a t)|= |(\dot a_k-b_k)+i((1+a_k+ \dot b_k)|$ and
 $$1-|\dot {r}_k(\mathrm a t)|^2 = 1- (\dot{a}_k - b_k)^2 -(a_k+1+\dot b_k)^2  = -(\dot a_k-b_k)^2-2(a_k+\dot b_k)-(a_k+\dot b_k)^2.$$
 Substituting the latter and equations (\ref{eqnChangeCoordinates_ab}) into (\ref{parameterabuw}), and after dividing both sides by $ \mathrm e^{i t}e^{i \theta_{0 k}}$, we get
  \begin{multline*}
 2( i \; \dot a_k -\dot b_k)+(\ddot{a}_k+i\ddot b_k)= \frac{1}{n}\sum_{m=1}^n e^{i(\theta_{0 m}-\theta_{0 k})} (a_m+1+ib_m)\\
 +\mathrm a \; \left(1-(\dot a_k-b_k)^2-((1+a_k+ \dot b_k)^2\right ) \left ((\dot{a}_k - b_k) +i((a_k+1)+\dot b_k) \right ).
 \end{multline*}
 Note that $\sum_{m=1}^n e^{i(\theta_{0 m}-\theta_{0 k})} = e^{-i\theta_{0 k}}\left ( \sum_{m=1}^n e^{i\theta_{0 m}} \right )=0$. We get
 \begin{multline*}
 \ddot{a}_k+i\ddot b_k=  -2\dot b_k+ 2 i\;  \dot a_k +  \frac{1}{n}\sum_{m=1}^n e^{i(\theta_{0 m}-\theta_{0 k})} (a_m+ib_m)\\
 -\mathrm a \; i \left [ (\dot a_k-b_k)^2+2(a_k+\dot b_k)+(a_k+\dot b_k)^2 \right ]\\
 -\mathrm a \left [ (\dot a_k-b_k)^2+2(a_k+\dot b_k)+(a_k+\dot b_k)^2 \right ]\left [(\dot{a}_k - b_k) +i(a_k+\dot b_k) \right ].
 \end{multline*}
  Separating the real and imaginary parts and using $u_k = \dot a_k, \; w_k = \dot b_k$ completes the proof.
 \end{proof}

The rotating frame coordinates $\{X_k,Y_k\}$ and $\{a_k,b_k\}$ are related  via an affine transformation (\ref{affine}), therefore
there is a one-to-one correspondence between their equilibrium points.
The starting rotating state solution $\mathrm r_k = \frac{1}{A}e^{iAt}e^{i\theta_{0 k}}$, associated with the fixed point $X_k+iY_k=e^{i \theta_0k}$,
corresponds to the fixed point $a_k=0$, $b_k=0$, $u_k = 0$, $w_k =0$, $k=1,\ldots, n$,  in the new coordinates.
Using  Lemma \ref{LemmaEquilibriumSolutions} we get the description of all equilibrium points for the  $(\bar a, \bar b, \bar u, \bar w)$ system (\ref{eqnNewSystem_abpq_coord}):

\begin{lemma}\label{LemmaEquilibriumSolutions_abuw} Let $\Delta = \{(\delta_1,\ldots,\delta_n) \in \mathbb R^n : \sum \limits_{k=1}^n e^{i(\theta_{0k}+\delta_k)}=0\}.$ Let $\mathcal R \subset \mathbb R^{4n}$ be
 $$\mathcal R = \left \{
a_k=cos( \delta_k)-1,b_k = \sin( \delta_k),u_k=0,w_k = 0 :  \{\delta_k\}\in \Delta \right \} .$$
Then $\mathcal R$ consists of  all equilibrium solutions of (\ref{eqnNewSystem_abpq_coord}) near the origin.
\end{lemma}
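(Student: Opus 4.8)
The plan is to transport Lemma \ref{LemmaEquilibriumSolutions} through the affine change of coordinates recorded just above the statement, namely $(X_k + i Y_k) = e^{i\theta_{0k}}(a_k + 1 + i b_k)$. Since each $\theta_{0k}$ is a constant, the $e^{it}$ factors cancel and this relation (together with its companion $\dot X_k + i \dot Y_k = e^{i\theta_{0k}}(u_k + i w_k)$, obtained by differentiating the defining identity for $r_k$) is a time-independent affine diffeomorphism of $\mathbb R^{4n}$ that conjugates the flow of (\ref{eqnXYEquilibriumSystem}) with the flow of (\ref{eqnNewSystem_abpq_coord}). In particular it maps the equilibrium set of one onto the equilibrium set of the other, so all that remains is to re-read the description of Lemma \ref{LemmaEquilibriumSolutions} in the $\{a,b,u,w\}$ variables.

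First I would invoke Lemma \ref{LemmaEquilibriumSolutions}: at an equilibrium of (\ref{eqnNewSystem_abpq_coord}) one necessarily has $u_k = w_k = 0$ for all $k$, and for each $k$ either $X_k = Y_k = 0$ or $(X_k, Y_k) = (\cos \delta_k', \sin \delta_k')$ for some $\delta_k'$, subject to $\bar X = \bar Y = 0$. Writing $a_k + 1 + i b_k = e^{-i\theta_{0k}}(X_k + i Y_k)$, the first alternative becomes $(a_k, b_k) = (-1, 0)$, a point at distance at least $1$ from the origin in the $\{a,b,u,w\}$ coordinates, hence irrelevant to a statement about equilibria near the origin. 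The second alternative becomes $a_k + 1 + i b_k = e^{i(\delta_k' - \theta_{0k})}$; renaming $\delta_k = \delta_k' - \theta_{0k}$ gives $a_k = \cos\delta_k - 1$, $b_k = \sin\delta_k$, and proximity to the origin forces $\cos\delta_k$ near $1$ and $\sin\delta_k$ near $0$, i.e.\ $\delta_k$ near $0$ modulo $2\pi$. Finally, the center-of-mass constraint $\sum_k (X_k + i Y_k) = 0$ reads $\sum_k e^{i\theta_{0k}}(a_k+1+ib_k) = \sum_k e^{i(\theta_{0k}+\delta_k)} = 0$, which is exactly the condition $\{\delta_k\} \in \Delta$. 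This shows every equilibrium of (\ref{eqnNewSystem_abpq_coord}) near the origin lies in $\mathcal R$.

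For the converse inclusion I would check that every point of $\mathcal R$ is an equilibrium of (\ref{eqnNewSystem_abpq_coord}); this is immediate either from the converse half of Lemma \ref{LemmaEquilibriumSolutions} via the same correspondence, or by direct substitution of $a_k = \cos\delta_k - 1$, $b_k = \sin\delta_k$, $u_k = w_k = 0$ into the right-hand sides of (\ref{eqnNewSystem_abpq_coord}), using $\sum_m e^{i(\theta_{0m}+\delta_m)} = 0$ to kill the coupling terms and $(u_k-b_k)^2 + 2(a_k+w_k) + (a_k+w_k)^2 = -1 + (1-2a_k-a_k^2) + \ldots$ to see the nonlinear factor vanish on the unit circle. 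There is no genuine obstacle here: the one point that needs care is the phrase \emph{near the origin}, which serves precisely to discard the branch $(a_k,b_k) = (-1,0)$ arising from particles sitting at the unstable fixed point at the origin and to restrict the angles $\delta_k$ to a neighborhood of $0$, so that $\mathcal R$ (with $\{\delta_k\}$ small) describes the local equilibrium set exactly.
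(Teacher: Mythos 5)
Your proposal matches the paper's own argument: the paper derives Lemma \ref{LemmaEquilibriumSolutions_abuw} precisely by transporting Lemma \ref{LemmaEquilibriumSolutions} through the affine correspondence $(X_k + iY_k) = e^{i\theta_{0k}}(a_k + 1 + ib_k)$, renaming $\delta_k' - \theta_{0k}$ as $\delta_k$, and noting that the center-of-mass constraint becomes $\{\delta_k\} \in \Delta$. Your added remark that the qualifier ``near the origin'' serves to discard the branch $(a_k,b_k)=(-1,0)$ is a correct and helpful clarification that the paper leaves implicit.
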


%%%%%%%%%%%%%%%%
%%%%%%%%%%%%%%
\vskip2mm

\begin{definition} For a rotating state $\mathrm r_k = \frac{1}{A}e^{iAt}e^{i\theta_{0 k}}$ satisfying (\ref{EqOrthogonality}), denote by $\bar c$ and  $\bar s$  the mutually orthogonal column vectors $$\bar c = (\cos(\theta_{0 1}),\ldots,\cos(\theta_{0 n}))^T\mbox{ and }\bar s = (\sin(\theta_{0 1}),\ldots,\sin(\theta_{0 n}))^T.$$
\end{definition}

Note that equation (\ref{EqOrthogonality})  ensures that  $\bar c \perp \bar s$. Unless one of them is the zero vector, $\bar c$ and $\bar s$ span a two-dimensional linear space.
The equality $\bar c=0$  implies that $\sin \theta_{0k}=\pm 1, $ and given that  $\sum_{k=1}^n e^{i\theta_{0k}}=0,$ that can only be satisfied if $n$ is even, with half of the angles $\theta_{0k}$ equal to $\pi/2$ and the other half equal to $-\pi/2.$ The equation $\bar s=0$  can only be satisfied if $n$ is even, with half of the angles $\theta_{0k}$ equal to $0$ and the other half equal to $\pi.$ Either way, the span of  $\bar c$ and $\bar s$ is two dimensional except  for the degenerate rotating states.

%%%%%%%%%%%%%%%%%%%
% Jacobian
%.
{\bf Jacobian}  We   calculate the Jacobian $J$ of (\ref{eqnNewSystem_abpq_coord}) at the origin, and the nonlinear part of (\ref{eqnNewSystem_abpq_coord}) next. The rows and the columns of the Jacobian are indexed by the variables $\{\{a_k\}, \{b_k\},\{u_k\},\{w_k\}\}$ and their derivatives, respectively.  Then

\renewcommand{\kbldelim}{(}% Left delimiter
\renewcommand{\kbrdelim}{)}% Right delimiter
\begin{equation}\label{eqJacobianGeneral}
  J = \kbordermatrix{
    & \partial a & \partial b & \partial u & \partial w  \\
    \partial \dot a & \OO_n & \OO_n & \I_n & \OO_n  \\
    \partial \dot b & \OO_n & \OO_n & \OO_n & \I_n  \\
    \partial \dot u & C & -S & \OO_n & 2\I_n  \\
    \partial \dot w & S-2\mathrm a \I & C & -2\I_n & -2\mathrm a\I_n
  }
\end{equation}
where $\OO_n$ is the $n\times n$ zero matrix, $\I_n$ is the $n\times n$ identity matrix,  $$S = \left\{\frac{1}{n}\sin(\theta_{0 m}-\theta_{0 k})\right\}_{k,m=1}^n\mbox{ and }C = \left\{\frac{1}{n}\cos(\theta_{0 m}-\theta_{0 k})\right\}_{k,m=1}^n.$$
The non-linear parts  of $\dot u_k$ and $\dot w_k$ in (\ref{eqnNewSystem_abpq_coord}) can be represented as $(\mathrm a \; \mathcal U_k)$ and $(\mathrm a\; \mathcal W_k)$ where
\begin{equation}
\label{eqn_abuw_sys_nonlinearities0}
\begin{array}{l}
\mathcal U_k =  -((u_k-b_k)^2+2(a_k+w_k)+(a_k+w_k)^2)(u_k-b_k) \\
\mathcal{W}_k=  -(u_k-b_k)^2-(a_k+w_k)^2-((u_k-b_k)^2+2(a_k+w_k)+(a_k+w_k)^2)(a_k+w_k).
\end{array}
\end{equation}
Denote $\mathcal U=(\mathcal U_1, \mathcal U_2, \dots \mathcal U_n)^T$ and $\mathcal W=(\mathcal W_1, \mathcal W_2, \dots \mathcal W_n)^T.$ The system (\ref{eqnNewSystem_abpq_coord}) can be expressed as:

\beq \label{compactabuw}
col (\dot{\bar a}, \dot{\bar b}, \dot{\bar u}, \dot{\bar w})= J col(\bar a, \bar b, \bar u, \bar w) + col(\oo_n, \oo_n, \mathrm a  \; \mathcal U, \mathrm a \; \mathcal W).
\eeq

%%%%%%%%%%%%%%%%%%
% SECTION: NON-DEGENERATE CASE

\section{Stability of Non-Degenerate Rotating States}\label{SectionNonDegenerateRingState}

In this section, we assume that the rotating state $\mathrm r_k = e^{it}e^{i\theta_{0 k}}$ is non-degenerate and satisfies (\ref{EqOrthogonality}).
We block diagonalize the  Jacobian (\ref{eqJacobianGeneral}) according to the stable and  center linear subspaces (there are no unstable directions)
and establish the stability of the dynamics on the center manifold.

 \begin{lemma}
 \label{LemmaSCaction}
 Let $S$ and $C$ be the $n\times n$ blocks used to define the Jacobian  $J$, in (\ref{eqJacobianGeneral}).  For $x \in \R^n$
 \bdima
 \begin{split}
 Sx= \frac{1}{n} ( x^T \bar s )\;  \bar c- \frac{1}{n}( x^T \bar c )\;  \bar s, \\
 Cx= \frac{1}{n}(  x^T \bar c )\;  \bar c+ \frac{1}{n} ( x^T \bar s )\;  \bar s.
 \end{split}
 \edima
Moreover,
 denoting  $f= \frac{1}{n}\sum_{k=1}^n \cos^2(\theta_{0k})$ and $d= \frac{1}{n}\sum_{k=1}^n \sin^2(\theta_{0k})$, we get %using Equation (\ref{EqOrthogonality}), one can check that
\begin{equation}
\label{EqCSatcs}
%\begin{split}
C\bar c= f\bar c,\; \;   S\bar c = -f \bar s, \qquad
C\bar s = d\bar s, \; \;  S\bar s=d\bar c,
%\end{split}
\end{equation}
and $\mathrm{ker}(S) = \mathrm{ker}(C) = \{\bar s,\bar c\}^\perp$.
 \end{lemma}

\begin{proof}
Let $x\in \R^n. $ The $k$-th entry of the vector $ n( S x) $ is
 \begin{equation*}\begin{split}\sum_{m=1}^n \sin(\theta_{0 m}-\theta_{0 k})x_m & = \cos(\theta_{0 k})\sum_{m=1}^n\sin(\theta_{ 0 m})x_m -\sin(\theta_{0 k})\sum_{m=1}^n\cos(\theta_{ 0 m})x_m \\ & = \cos(\theta_{ 0 k}) (\bar s^T x)-\sin(\theta_{0 k}) (\bar c^T x),
 \end{split}\end{equation*}
therefore
$ Sx= \frac{1}{n}( x^T \bar s )\;  \bar c- \frac{1}{n} ( x^T \bar c )\;  \bar s.$ A similar argument  applies to $Cx.$

Substitute $x=\bar c$ (or $\bar s$) and  $ \bar c ^T\bar s =0$
(per (\ref{EqOrthogonality}))  to obtain
(\ref{EqCSatcs}).

Finally, we use the linear independence of $\bar c$ and $\bar s$ to describe $\mathrm{ker}(S)$  and  $ \mathrm{ker}(C) $.  We get $x\in \mathrm{ker}(S)$  (or $ \mathrm{ker}(C), $ respectively ) if and only if both coefficients
$\frac{1}{n} x^T \bar s $ and $ \frac{1}{n} x^T \bar c $  equal zero. This proves
$\mathrm{ker}(S) = \mathrm{ker}(C) = \{\bar s,\bar c\}^\perp$.

\end{proof}

 \begin{lemma}
 \label{LemmaNonDEigenspace}
 The spectrum of the Jacobian matrix $J$ (introduced in (\ref{eqJacobianGeneral})) consists of $(n-2)$ zero eigenvalues,  $\pm i$, and $3n$ eigenvalues from the half-plane $\mbox{Re} \lambda <0.$
 \end{lemma}

 \begin{proof}
 Let $\V$ be an $n\times (n-2)$ dimensional matrix whose column vectors form an orthonormal basis for $\mathrm{ker}(C) = \mathrm{ker}(S)$.  Consider the linear subspaces $L_1$ and $L_2$ of $\mathbb R^{4n}$ spanned by the columns of the matrices $B_1$ and $B_2$, respectively, where

  \begin{equation*}
  B_1 = \left[
          \begin{array}{cccc}
            \V & \mathbb O & \mathbb O & \mathbb O\\
            \OO_{n,n-2} & \V & \mathbb O & \mathbb O \\
            \OO_{n,n-2} & \mathbb O & \V & \mathbb O \\
            \OO_{n,n-2} & \mathbb O & \mathbb O & \V \\
          \end{array}
        \right],
        \mbox{ and }
         B_2 = \left[
          \begin{array}{cccccccc}
           \bar c & \bar s & 0 & 0 & 0 & 0 & 0 & 0 \\
            0 & 0 & \bar c & \bar s & 0 & 0 & 0 & 0 \\
            0 & 0 & 0 & 0 & \bar c & \bar s & 0 & 0 \\
            0 & 0 & 0 & 0 & 0 & 0 & \bar c & \bar s \\
          \end{array}
        \right],
  \end{equation*}
where $\mathbb O=\OO_{n,n-2}$ is the zero matrix.
Equivalently, $B_1=I_4\otimes\V, B_2= I_4\otimes [\bar c \; \  \bar s]. $
Note that $\mathbb R^{4n} = L_1\oplus L_2$, $L_1\perp L_2$, $\mathrm{dim}(L_1) = 4n-8$, and $\mathrm{dim}(L_2)=8$.
We show next that in the basis given by $[B_1 \; B_2]$ the Jacobian $J$ has a block-diagonal form, with blocks $J_1$ and $J_2$ of dimensions $(4n-8)\times(4n-8)$ and $8\times 8$ respectively, where $J_1$ is given in (\ref{EqJacobianJ1}) and $J_2$ is given in (\ref{EqJacobianRestrictionB2}).
Equivalently, we need to show that $ J [B_1 \;  B_2]= [B_1\;  B_2] \mbox{diag}\{J_1, J_2\},$ i.e. that
$J\; B_1=B_1\; J_1 \; $  and $ \; J\; B_2 = B_2 \; J_2.$

From (\ref{eqJacobianGeneral}), using $C \V= \mathbb O=\OO_{n,n-2}$  we get:
\begin{equation}\label{EqJacobianRestrictionB1}
JB_1 = \left[
          \begin{array}{cccc}
            \OO_{n,n-2} & \mathbb O & \V & \mathbb O \\
            \OO_{n,n-2} & \mathbb O & \mathbb O & \V \\
            \OO_{n,n-2} & \mathbb O & \mathbb O & 2\V \\
            -2a\V & \mathbb O & -2\V & -2 a \V \\
          \end{array}
        \right]=  \left[
\begin{array}{cccc}
 0 & 0 & 1 & 0 \\
 0 & 0 & 0 & 1 \\
 0 & 0 & 0 & 2 \\
 -2 a & 0 & -2 & -2 a \\
\end{array}
\right]\otimes\V.
       \end{equation}
The action of $J$ on the subspace $L_1$ mimics that of a $4\times 4$ matrix.
Denote:
 \begin{equation}\label{EqJacobianJ1}
J_a=\left[
\begin{array}{cccc}
 0 & 0 & 1 & 0 \\ 0 & 0 & 0 & 1 \\ 0 & 0 & 0 & 2 \\ -2 a & 0 & -2 & -2 a \\
\end{array} \right], \
 J_1 = \left[
          \begin{array}{cccc}
            \OO_{n-2} & \OO_{n-2} & \I_{n-2} & \OO_{n-2} \\
            \OO_{n-2} & \OO_{n-2} & \OO_{n-2} & \I_{n-2} \\
            \OO_{n-2} & \OO_{n-2} & \OO_{n-2} & 2\I_{n-2} \\
            -2\mathrm a\I_{n-2} & \OO_{n-2} & -2\I_{n-2} & -2\mathrm a\I_{n-2} \\
          \end{array}
        \right].
     \end{equation}
Note that $J_1=J_a\otimes I_{n-2}$  and   $JB_1=  J_a \otimes\V$, per (\ref{EqJacobianRestrictionB1}).
By direct calculation, or using tensor notation
\footnote{
$ B_1J_1= (I_4\otimes \V)(J_a \otimes I_{n-2})= (I_4J_a)\otimes (\V I_{n-2})= J_a\otimes \V =J B_1$},
we  get that $B_1J_1$ and $JB_1 $ are equal.

The characteristic polynomial of $J_a$ is $\lambda  (\lambda ^3+2 a \lambda ^2+4 \lambda +4 a)$.
We conclude
$$\det(J_1-\lambda \I) = \lambda^{n-2}f_{\mathrm a}(\lambda)^{n-2}, \;
\mbox{ where  }
f_{\mathrm a}(\lambda)=\lambda^3+2a \lambda^2+4\lambda +4a .$$
The cubic polynomial $f_{\mathrm a}$ has roots with negative real parts since its positive coefficients, $1, 2a, 4, 4a$  satisfy Hurwitz' stability condition $(4)(2a)> 1(4a)$.

To understand the action of $J$  on the subspace $L_2$ spanned by $B_2$, calculate $JB_2$ using (\ref{eqJacobianGeneral}) and
$ C\bar c= f\bar c,\;   S\bar c = -f \bar s $,
$ C\bar s = d\bar s, \; \;  S\bar s=d\bar c$ from (\ref{EqCSatcs}). It follows that
\begin{equation}\label{EqJacobianRestrictionB2}
          JB_2 = \left[
          \begin{array}{cccc|cccc}
            \bar 0 & 0 & 0 & 0 & \bar c & \bar s & 0 & 0 \\
            0 & 0 & 0 & 0 & 0 & 0 & \bar c & \bar s \\
            f \bar c & d \bar s & f\bar s & -d \bar c & 0 & 0 & 2\bar c & 2\bar s \\
            -2a\bar c -f \bar s & d\bar c-2a \bar s & f \bar c  & d\bar s & -2\bar c & -2\bar s & -2a\bar c & -2a \bar s \\
          \end{array}
        \right].
\end{equation}
Each of the eight columns of $J B_2$ is a linear combinations of the eight columns of $B_2$; collecting the coefficients yields the  $8\times 8$ matrix $J_2$,
\bdima
 J_2 = \left(
\begin{array}{c|c}
  \OO_4
  & \I_4 \\
\hline
  \begin{array}{cccc}
                  f & 0 & 0 & -d \\
                  0 & d & f & 0 \\
                  -2a & d & f & 0 \\
                  -f & -2a & 0 & d \\
                \end{array}
   &
   \begin{array}{cccc}
   0 & 0 & 2 & 0 \\
   0 & 0 & 0 &2 \\
   -2 & 0 & -2a & 0 \\
   0 & -2 & 0 & -2a
   \end{array}
\end{array}\right).
\edima
A direct calculation of $B_2 J_2$ shows that it equals $JB_2$.

Recall  that $d=\frac{1}{n}\sum_{k=1}^n \sin^2(\theta_{0k})$ and $ f=\frac{1}{n}\sum_{k=1}^n \cos^2(\theta_{0k})$,  thus $d+f=1$. Set $\omega$ such that $f = 1/2 +\omega$ and $d = 1/2 -\omega$. Then $-\frac12<\omega <\frac12 $ and
\bdima % \label{EqJacobianRestrictionB2}
J_2 =
\left(
\begin{array}{c|c}
  \OO_4
  & \I_4 \\
\hline
  \begin{array}{cccc}
                  \frac12 +\omega & 0 & 0 & -\frac12 +\omega \\
                  0 & \frac12 -\omega & \frac12 +\omega & 0 \\
                  -2a & \frac12 -\omega & \frac12 +\omega & 0 \\
                  -\frac12 -\omega & -2a & 0 & \frac12 -\omega \\
                \end{array}
   &
   \begin{array}{cccc}
   0 & 0 & 2 & 0 \\
   0 & 0 & 0 &2 \\
   -2 & 0 & -2a & 0 \\
   0 & -2 & 0 & -2a
   \end{array}
\end{array}\right).
\edima % 
The characteristic polynomial of $J_2$, expanded using Mathematica, is equal to $$\det(J_2-\lambda I)=(1 + \lambda^2)( \lambda^6+ 4 a\lambda^5 + (5+4a^2) \lambda^4 + 14 a \lambda^3 + (4+8a^2) \lambda^2 + 4 a\lambda   + a^2(1 - 4 \omega^2)).$$
Therefore, the spectrum of $J_2$ consists of $\pm i$ and zeros of the polynomial
$$g_{\omega}(\lambda) =   \lambda^6+ 4 a\lambda^5 + (5+4a^2) \lambda^4 + 14 a \lambda^3 + (4+8a^2) \lambda^2 + 4 a\lambda   + a^2(1 - 4 \omega^2) .$$
We employ Hurwitz stability criterium \cite[Ch. XV, Section 6]{GantmacherMatrices} for the polynomial $g_\omega(\lambda)$;
 the Hurwitz matrix $H$ is:
\[H =
\left(
\begin{array}{cccccc}
 4a & 14a & 4a & 0 & 0 & 0 \\
 1 & 5+4a^2 & 4+8a^2 & a^2(1-4 \omega^2) & 0 & 0 \\
 0 & 4a & 14a & 4a & 0 & 0 \\
 0 & 1 & 5+4a^2 & 4+8a^2 & a^2(1-4 \omega^2) & 0 \\
 0 & 0 & 4a & 14a & 4a & 0 \\
 0 & 0 & 1 & 5+4a^2 & 4+8a^2 & a^2(1-4 \omega^2) \\
\end{array}
\right).
\]
The leading principal minors of $H$ are $\Delta_1 = 4a$, $\Delta_2= 6a+16a^3$, and
\begin{equation*}
\begin{array}{l}
\Delta_3 = 96 a^4+36 a^2, \\
\Delta_4 = 72 a^2 + a^6 (576 - 256 \omega^2) + a^4 (408 - 96 \omega^2), \\
\Delta_5 = 8 a^7 \left(144-128 \omega ^4+480 \omega ^2\right)+8 a^5 \left(156 \omega ^2+153\right)+288 a^3,\\
 \Delta_6 = 8(1-4\omega^2)\left[ 16 a^9 \left(9-8 \omega ^4+30 \omega ^2\right)+3 a^7 \left(52 \omega ^2+51\right)+36 a^5\right ],
 \end{array}
 \end{equation*}
all strictly positive for $-1/2<\omega <1/2$. By the Routh-Hurwitz criterion, the roots of the polynomial $g_\omega(\lambda)$ are in the left half-plane.

Overall $\det(J-\lambda I)=\det(J_1-\lambda I)\det(J_2-\lambda I)$, therefore
$$ \det(J-\lambda I)= \lambda ^{n-2} (1 + \lambda^2) f_{\mathrm a}(\lambda)^{n-2}g_\omega(\lambda)$$
where the polynomial $f_{\mathrm a}(\lambda)^{n-2} g_\omega(\lambda)$ has $3n$ roots in the half-plane  $\mbox{Re}\lambda <0.$
\end{proof}

\subsection*{Notation}  
To keep notations compact, we will adopt the following convention: for a vector $\bar \delta = (\delta_1,\ldots,\delta_n)^T$ and a real-variable function $f$, we write $f(\bar \delta)$ to denote the vector $(f(\delta_1),\ldots, f(\delta_n))^T$. Let $\mathbb 1_n=(1, 1, \dots , 1)^T\in \R^n .$

\begin{remark} Let $F:\R^n \to \R^2$ be the function
$\displaystyle F(\bar \delta)= \left [ \begin{array}{l}\bar c ^{\scriptscriptstyle T}  \cos (\bar \delta) -\bar s ^{\scriptscriptstyle T}  \sin (\bar \delta) \\
\bar c ^{\scriptscriptstyle T}    \sin (\bar \delta)  + \bar s ^{\scriptscriptstyle T} \cos (\bar \delta)
\end{array} \right ]$. Then $\Delta=\{ \bar \delta \in \R^n |\; F(\bar \delta )= (0, 0)^{\scriptscriptstyle T}\} $
is a smooth $n-2$ dimensional surface  near the origin in $\R^n.$  Similarly,  the set $\mathcal R$ of fixed points for (\ref{eqnNewSystem_abpq_coord}) is an $(n-2)$-dimensional manifold near  the origin in $\mathbb R^{4n}$.
\end{remark}

\begin{proof}
To study $\Delta$, note that $\displaystyle F(0)=\left [ \begin{array}{l} \sum _{k=1}^n\cos \theta _{0k}\\
 \sum _{k=1}^n\sin \theta _{0k} \end{array} \right ]  =\left [ \begin{array}{l} 0\\0
\end{array} \right ]. $
The linear approximation of $F$ centered at $\bar \delta _0 =0$ is a map of rank 2, since
$$ F(\bar \delta) \simeq \left [ \begin{array}{l} -\bar s ^{\scriptscriptstyle T} \bar \delta \\
\bar c ^{\scriptscriptstyle T} \bar \delta \end{array} \right ], \;  \nabla F= \left [ \begin{array}{l} -\bar s  \\
\bar c  \end{array} \right ],
$$
with linearly independent vectors $\bar s$ and $\bar c$.
We conclude that  $\Delta =F^{-1}((0, 0)^{\scriptscriptstyle T})$
 is a smooth $(n-2)$-dimensional surface for  $\bar \delta$ near the origin in $\R^n.$

Consider the map $G:\R^n\to \R^{4n}$ defined by $G(\bar \delta)=[\cos (\bar \delta ) - \mathbb 1_n, \sin (\bar \delta ), 0_n, 0_n].$ The linearization of $G$  has full rank near the origin due to the block $\sin (\bar \delta ) \simeq \bar \delta$.  Locally, $\mathcal R$ is the image of the smooth $(n-2)$-dimensional surface $\Delta $  under $G$, and thus  $\mathcal R$ is also a smooth  $(n-2)$-dimensional surface.
\end{proof}

  \begin{lemma}
 \label{LemmaNonDManifold}
 The center manifold  of (\ref{eqnNewSystem_abpq_coord}) coincides  near the origin with the surface
 \begin{equation}
 \label{eqManifoldM}\mathcal M = \mathcal R+\mathrm{Span} \left\{ \left(
 \begin{array}{c}
  \bar c \\  -\bar s \\ -\bar s \\ -\bar c \\
  \end{array} \right),
   \left(
    \begin{array}{c}
    \bar s \\ \bar c \\ \bar c \\ -\bar s \\
 \end{array} \right) \right \},
\end{equation}
where the set $\mathcal R $, defined in Lemma \ref{LemmaEquilibriumSolutions_abuw},
consists of the equilibrium points of (\ref{eqnNewSystem_abpq_coord}).
On $\mathcal M$, the dynamics of (\ref{eqnNewSystem_abpq_coord})  is stable near the origin.
\end{lemma}

\begin{proof}

Lemma \ref{LemmaNonDEigenspace} shows that the center manifold $M$ of $J$ at the origin is $n$-dimensional.
Let $M_\epsilon$ denote the part of $M \cap B_\epsilon,$ where $B_\epsilon$ is a small ball centered at the origin in $\R^{4n}$.

The  $\pm i $ eigenvalues of $J$ suggest that the center manifold may contain nested periodic orbits. We will identify such periodic solutions in the  $\{\bar a, \bar b, \bar u, \bar w\}$-system, by matching them to rotating states with a nonzero center of mass, $R_0=|R_0|e^{i\rho}$, from the  coordinates  $(x, y, \dot x, \dot y)$. The parameter $|R_0|$ governs the nesting.

Consider a nearby rotating state
   $\mathrm r_k(t) =\frac{1}{A} e^{i(\theta_{0k}+\delta_k)}e^{iAt}+R_0$, $k=1,\ldots,n$,
    viewed as a perturbation of the  rotating state $\{\frac{1}{A}e^{i\theta_{0k}}e^{iAt}\}$.
   Necessarily, $\bar \delta \in \Delta.$
   Using  $A=\frac{1}{a}$, we get $\mathrm r_k(at) = \frac{1}{A} e^{i(\theta_{0k}+\delta_k)}e^{it}+R_0$. Substituting into
   $(a_k+1) + i b_k  = A\mathrm r_k(at) \; e^{-i t}e^{-i \theta_{0 k}}$,
   per Equation (\ref{DefineChangeCoordinates_ab}),  gives $a_k + i b_k  =e^{i\delta_k}-1+ A R_0 e^{-it}e^{-i\theta_{0k}}$.
     We get $\dot a_k+i \dot b_k=-iA|R_0|e^{i\rho}e^{-it}e^{-i\theta_{0k}}$, or $\bar u +i \bar w= A|R_0|e^{(\rho-t)i}(-\bar s-i\bar c).$

   In $\{\bar a, \bar b, \bar u, \bar w\}$-coordinates, the nearby rotating state centered at   $R_0=|R_0|e^{i\rho} $ is:
\begin{equation}\label{eqnRingStateInABPQ_Coord} \left[
                   \begin{array}{c}
                     \bar a \\
                     \bar b \\
                     \bar u \\
                     \bar w \\
                   \end{array}
                 \right] =
                 \left[
                   \begin{array}{c}
                     \cos(\bar \delta)-\mathbb 1_n \\
                     \sin(\bar \delta) \\
                     0 \\
                     0 \\
                   \end{array}
                 \right]
                 + A|R_0|\cos(\rho-t)
                 \left[
                   \begin{array}{c}
                     \bar c \\
                     -\bar s \\
                     -\bar s \\
                     -\bar c \\
                   \end{array}
                 \right]
                  + A|R_0|\sin(\rho-t)
                 \left[
                   \begin{array}{c}
                     \bar s \\
                     \bar c \\
                     \bar c \\
                     -\bar s \\
                   \end{array}
                 \right]
\end{equation}
The first term of (\ref{eqnRingStateInABPQ_Coord}) is a point in $\mathcal R$, thus a fixed point of (\ref{eqnNewSystem_abpq_coord}).
 Allowing $\bar \delta$ to be arbitrary in $\Delta$,
$|R_0|$ to vary in $[0, \infty)$ and $\rho $ in $[0, 2\pi]$ leads to the first term attaining range $\mathcal R$ and the last two terms spanning a plane. Thus the union of these periodic orbits is the set $\mathcal M $ given in (\ref{eqManifoldM}). Near the origin, $\mathcal M $ is an $n$-dimensional, smooth manifold. Figure \ref{FigureCenterManifold} illustrates this construction.

 \begin{figure}[h!]
\centering
\includegraphics[width=.9\textwidth]{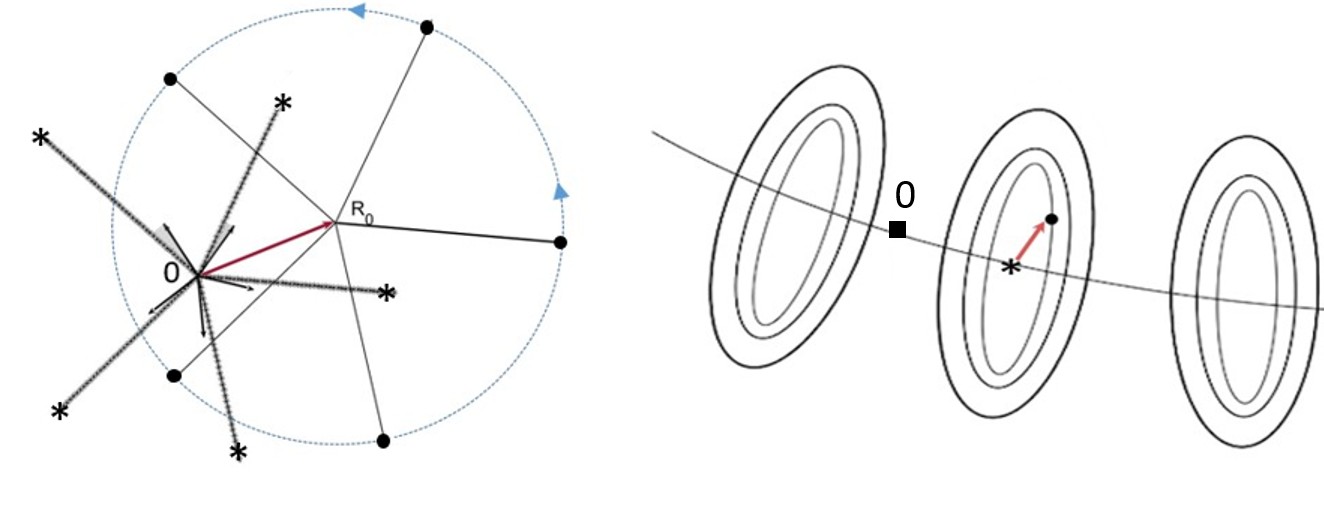}% {CenterManifold}%{media/CenterManifold}
\caption{\label{FigureCenterManifold}
Depiction of  the manifold $\mathcal M$ for $ K=1, n=5.$
Left: Initial locations for two perturbations of a rotating state, shown in physical coordinates. The short vectors from the origin indicate the angles $\theta_{k,0}$.
Star markers represent $e^{i( \delta_k+\theta_{k,0})}$, and round markers indicate $R_0+e^{i( \delta_k+\theta_{k,0})}$. Shaded angles illustrate phase perturbations $\delta_1$ and $\delta_2.$
Right: The manifold  $\mathcal M$ in $(\bar a, \bar b, \bar u, \bar w)$-coordinates.
The unperturbed rotating state corresponds to the origin (square marker).
 The submanifold $\mathcal R$ (shown as a line)  consists of fixed points  of (\ref{eqnNewSystem_abpq_coord}) representing rotating states about the origin
(star marker configurations).
 Through each point in $\mathcal R$  passes a plane  composed of closed orbits (concentric ellipses) corresponding to rotating states with nonzero center of mass $R_0$ (round markers). In physical coordinates $R_0$ is stationary; in the counterclockwise co-rotating frame, it moves clockwise along an elliptical path.}
\end{figure}

If $|\bar \delta |$ and $|R_0|$ are small,
 the  periodic orbits in (\ref{eqnRingStateInABPQ_Coord})  remain near the origin in $\R^{4n}$ for all $t\in \R,$ and therefore they must belong to the center manifold $M_\epsilon$.

  We get that the manifold $\mathcal M \cap B_\epsilon $ is a subset of $ M_\epsilon $ and is therefore tangent to the center eigenspace of $J$;  all these manifolds are $n$-dimensional. Moreover $\mathcal M \cap B_\epsilon $ consists of trajectories of (\ref{eqnNewSystem_abpq_coord}), and as a consequence, it is invariant under its flow. This implies that $\mathcal M \cap B_\epsilon $ is a center manifold.

The dynamics on $\mathcal M \cap B_\epsilon $ is given explicitly in (\ref{eqnRingStateInABPQ_Coord}), demonstrating stability.
\end{proof}

In view of center manifold theory \cite[Theorem 2 in Section 2.4]{Carr:CentreManifold},  Lemma \ref{LemmaNonDManifold} establishes the stability of the zero solution (\ref{eqnNewSystem_abpq_coord}).
Furthermore,  any solution of (\ref{eqnNewSystem_abpq_coord}) that starts in a small neighborhood of the origin in $\mathbb R^{4n}$  approaches the center manifold exponentially fast. In other words, in the {\it original} coordinate system,  all solutions with initial conditions near those of a non-degenerate rotating state will converge to a rotating state configuration (possibly {\it another} nearby rotating state).

\vskip2mm
Note that for $n$ even, as the phases $\theta_{0k}$ approach a degenerate configuration, i.e. $\omega^2 $ approaches $\frac14,$ one of the roots of $g_\omega$ approaches zero. This increases the multiplicity of the zero eigenvalue for the characteristic polynomial of $J$. For degenerate configurations, the center manifold has dimension $n+1$, with the set of fixed points $\mathcal R$ having codimension three. The manifold of periodic orbits is a codimension-one submanifold of the central manifold, and no longer accounts for the whole dynamics on the central manifold. The latter renders the geometric argument developed in this section inapplicable, and motivates Section \ref{SectionNotTaylor}.

For fixed $a$, if $n$ is odd, considering all possible rotating states generates values of $\omega$ that form a compact subset of $(-1/2, 1/2), $ therefore the roots of the polynomials $g_\omega$ lie in a compact subset of the left-half plane.  Thus there exits $\eta >0$ such for {\em all} the rotating states, the roots of $f_{\mathrm a}g_\omega$ satisfy $\mbox{Re} \lambda \leq -2 \eta, $ implying
trajectories approach their limit cycle at a rate faster than  $e^{-\eta t }$.

%%%%%%%%%%%%%%%%%%
% 

\section{A Pseudo-Orbit Based Approximation of Center Manifolds }
\label{SectionNotTaylor}

Non-isolated fixed points are common in dynamical systems with symmetries or identical agents, yet rigorous analytical tools for such settings are lacking. Traditional Taylor expansions of the center manifold may not accurately capture the system’s dynamics, as truncated flows can differ qualitatively from the true flow, regardless of polynomial order. We propose a framework for approximating the center manifold that captures the true dynamics near non-isolated fixed points, outperforming Taylor-based methods for stable dynamics (if orbits escape away from the non-isolated fixed
points, the scale of the flow get bigger, diminishing the impact of our method). The method is applied in Section \ref{SectionDegenerateRingState}.

Our construction uses  contraction operators acting on function spaces and their fixed points. To avoid confusion, we use {\em equilibrium} to refer to points that are left unchanged by the flow of a dynamical system, and {\em fixed} to describe functions that are left unchanged by these operators, with some flexibility in how the terms are applied provided the context is clear.

\subsection*{Notation}: We use the standard big-O notation  $\mathcal O(f(x))$ to convey a vector that has magnitude at most a positive multiple of $|f(x)| $ for $x$ near zero. For vectors $x\in \R^{d_1}, y\in \R^{d_2}$ we denote $col(x, y)$ the column vector $(x_1, \dots x_{d_1}, y_1, \dots, y_{d_2})^{\scriptscriptstyle T}.$

%%%%%%

\subsection*{Example}
We illustrate the breakdown of Taylor approximations for the center manifold near non-isolated equilibria and present our construction, which accurately captures the dynamics, using the  three-dimensional system below.
\beq
\begin{array}{ll}
\dot{x}=& x(y-x\sin x) \\
\dot{y}=& -y^2(y-z)\\
\dot{z}=&-z+ x(y-x\sin x)(\sin x + x\cos x) + x \sin x.
\end{array}
\label{Taylorexample}
\eeq
The set of equilibrium points  is $\{ (x, x\sin x, x \sin x ), \; x \in \mathbb R\} \ni (0,0,0)$. The origin has the  Jacobian matrix $\textrm{diag} \{0,0,-1\}.$

The line $x=0, y=0$ is invariant, and has the direction of the $(-1)$ eigenvector, thus the stable manifold is the $z$-axis.
The surface parametrized  by $(x, y, x \sin x)$ is invariant under the flow
of  $(\ref{Taylorexample} )$ since
$\displaystyle d(z-x\sin x)/dt= (-1) (z-x\sin x).$ Moreover, the surface is tangent to the $(x, y)$-plane at the origin, therefore it is the center manifold.
Using the center manifold function $z=h(x,y)=x \sin x$,  the study of stability of $(\ref{Taylorexample} )$ can be reduced to the study of
\beq
\begin{array}{ll}
\dot{x}=& x(y-x\sin x) \\
\dot{y}=& -y^2(y-x\sin x)\\
\end{array}.
\label{TaylorexampleTrue}
\eeq
The points with $y= x \sin x$, $x \in \mathbb R$, are equilibrium points for the System $(\ref{TaylorexampleTrue})$. The flow on the center manifold is illustrated in Figure  \ref{SineTaylor}, Left.
 One can show that the origin is  stable (but not asymptotically stable) and that the $\omega$-limit points for (\ref{TaylorexampleTrue}) are precisely the equilibrium points $\{(x, x \sin x)\}.$

If Taylor polynomials are used to approximate $h$, we get a truncated dynamical system
\beq
\begin{array}{ll}
\dot{x}=& x(y-x\sin x) \\
\dot{y}=& -y^2(y-xT_{2k-1}(x)),\\
\end{array}
\label{TaylorexampleTruncated}
\eeq
where  $T_{2k-1}(x)$ denotes the Taylor polynomial of degree $2k-1$ for  $\sin x .$
Depending on whether $k$ itself is even or odd, $xT_{2k-1}(x)$ is an under- or over-estimate of $h$.

We only focus on the case when $k$ is even. 
Then $ x T(x) \leq x \sin x $ (we dropped the subscript $2k-1$ from the Taylor polynomial).

The truncated system (\ref{TaylorexampleTruncated}) splits the curve of fixed points $\{(x, x\sin x) \}$ into two separate nullclines. 
In (\ref{TaylorexampleTruncated}) the  flow breaches the nullcline $y=x \sin x$ creating transport across what is supposed to be a set of fixed points for the original system. Once a trajectory of (\ref{TaylorexampleTruncated}) enters the region between the nullclines  $y=x T(x)$ and $y=x \sin x$ it is trapped there, and it approaches the origin  \footnote{One can verify that the region $ xT(x)\leq y\leq x\sin x$ below $y=1/6$ is forward-invariant and the function $L=x^2+y^2$ is decreasing in this region. Applying LaSalle's invariance principle and using the fact that the only points where $L$ has zero derivative satisfy
two of the conditions $x=0, y=0, y=x \sin x, y=x T(x),$ we obtain that the origin is the only $\omega$-limit point for (\ref{TaylorexampleTruncated}).}. Thus, unlike the original system,  (\ref{TaylorexampleTruncated})  has the origin as an {\it asymptotically stable} fixed point. The truncated flow is illustrated in Figure  \ref{SineTaylor}, Right.

 \begin{figure}[h!]
 \centering
\includegraphics[scale=0.7]{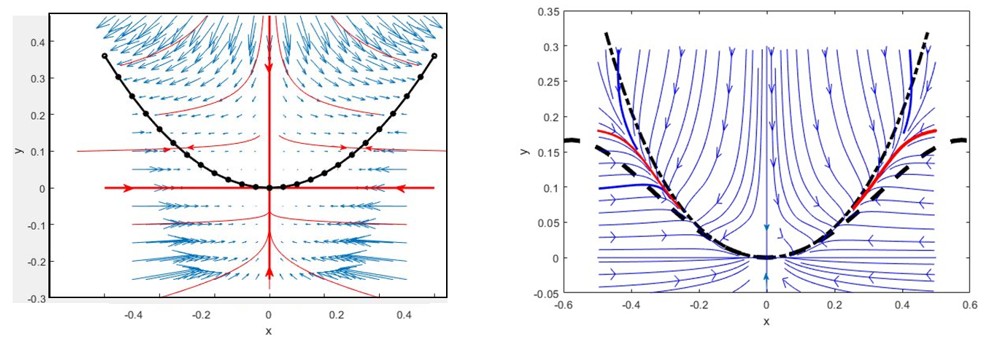}
\caption{\label{SineTaylor}
Flow near the origin for system $(\ref{TaylorexampleTrue})$  and its truncated system (\ref{TaylorexampleTruncated}). Left: The accurate phase portrait.  Fixed points along $y = x\sin x$ are shown as black dots. Blue arrows indicate the vector field; red curves show trajectories converging to fixed points. The origin is stable but not asymptotically stable; in the upper half-plane, only points on the $y$-axis converge to it.
   Right: Phase portrait of the truncated system (\ref{TaylorexampleTruncated}) with nullclines $y = x\sin x$ and $y = xT(x)$ (dashed). Trajectories in the first quadrant are eventually trapped between the two nullclines and converge to the origin, which is asymptotically stable, unlike the original  system (\ref{Taylorexample})  from which the approximation is derived.}
\end{figure}

\subsection*{A New Approximation of the Central Manifold}
\label{SectionANewApproach}

We present our center manifold technique in the general setting, i.e for a system having the origin as an  equilibrium point with stable linearization, expressed in coordinates $(x_s, x_c)$ that  block-diagonalize the system:

\beq
\begin{array}{ll}
\dot x_s= & A_s x_s + f(x_s, x_c) \\
\dot x_c= & B x_c + g(x, x_c)\\
\end{array}
\label{standardcentersetup}
\eeq
Here $A_s$ is an $s\times s$ matrix whose eigenvalues have negative real part, $B$ is a $c\times c$  with zero eigenvalues. The functions $f$ and $g$ are  $C^r$ functions ($r\geq2 $)  that equal zero and have zero gradients at the origin. Denote by $E$ the set of equilibrium points for (\ref{standardcentersetup}), and by $E_c$ its projection onto the zero eigenspace. We  assume the origin to be non-isolated in $E$.

 The center manifold theorem guarantees the existence of a map $h=h(x_c) $ defined
  in a neighborhood $\mathcal N _c$ of the origin in $\R^c$ with values in $\R^s$ with $h(0)=0, \nabla h(0)=0$ and such that its graph $\{(h(x_c),x_c), \; x_c\in \mathcal N _c \} $ is
 locally invariant under $(\ref{standardcentersetup})$. Moreover, the local stability of $(\ref{standardcentersetup})$   is equivalent to the local stability of the $c$-dimensional system
 \beq
 \dot x_c=   B x_c + g(h(x_c), x_c).
 \label{standardcentersetupreduced}
 \eeq

 Our method of approximating the function $h$  references its construction via fixed points for operators defined on the space of slow-growing functions,  \cite{Bressan:2007}.
Let  $\eta>0 $ be less than the spectral gap of $A_s;$  define the linear space
$$X_\eta=\left\{ w:(-\infty, \infty ) \to \R ^{s+c}, w \mbox{ continuous },\sup _{t\in (-\infty, \infty )} |w(t)e^{-\eta |t|}|
 < \infty \right\}.$$%
 $X_\eta$, with the norm $||w||_\eta=\sup _{t\in (-\infty, \infty )} |w(t)e^{-\eta |t|}|$, is a Banach space.
 For example, the constant function $w(t)= w_0$ is in $X_{\eta}$ , where
 its norm coincides with the norm $|w_0|$ of $w_0$ in $\mathbb R^{s+c} $; the linear function $t w_0$ is in $ X_\eta$ and has norm  $\displaystyle \frac{1}{e \eta} |w_0|$ .% since the supremum of $te^{-\eta |t|}$ is $\frac{1}{e \eta}.$

Define $\Gamma : \mathcal N_c \times X_\eta \to X_\eta $, where $\mathcal N_c $ is a neighborhood of $0$ in $ \R^c$, as follows \footnote{Technically the functions $f, g$  are modified outside of a neighborhood of the origin to have compact support, with their support contained within a region where the nonlinearities are small. Different modifications lead to different operators, and consequently, to different possible maps $h$.}:
\beq
\label{standardGamma}
\Gamma_{x_c}(w)(t)=\left [
\begin{array}{c}
\bigintss _{-\infty }^t\; e^{A_{s}(t-\tau)} f(w(\tau)) d\tau
\\
\\
e^{Bt}\; x_c +\bigintss _0^t\; e^{B (t-\tau)} g(w(\tau)) d \tau
\end{array}
\right ], \; \mbox{ for } x_c \in \mathcal N _c, w\in X_\eta.
\eeq
%%%% %%
Each operator $\Gamma _{x_c}$ is a contraction.
  Let $F_{x_c}$ denote the function that is the fixed point for the operator $\Gamma _{x_c}$. Then the set $\mathcal M = \{F_{x_c}(0) : x_c\in \mathcal N_c\}$ is the center manifold of the system (\ref{standardcentersetup}).
The projection of $F_{x_c}(0)$ onto the subspace $\mathbb R^s $ defines the  center manifold map, meaning that
 $h: \mathcal N_c\rightarrow \mathbb R^s $ satisfies $(h(x_c),x_c)=F_{x_c}(0)$.

 Moreover, for any function $w$ in $ X_\eta$ the sequence of iterates $\Gamma _{x_c} ^n(w) $ converges to the fixed point function $F_{x_c}(t)$ in $ X_\eta$ and $\Gamma _{x_c} ^n(w)(0) $ converges to $(h(x_c),x_c)$ in $\mathbb R^{s+c}.$
 If $\mathcal N_c$ is sufficiently small, the contraction factors of $\Gamma _{x_c}$ are  below $\frac12, $ thus for any $w\in X_\eta$ we have $ \|F_{x_c}-w\|_\eta \leq 2 \| \Gamma _{x_c}(w)-w\|_\eta .$

Although $h$ is not unique, its value at equilibrium points is uniquely determined:  if $(x_{s,0}, x_{c,0})$ is an equilibrium point for (\ref{standardcentersetup}) ,
then $h(x_{c,0})=x_{s,0}.$

Our technique is as follows: to approximate $h(x_c),$ we shadow the orbit $\{\Gamma _ {x_c} ^n(w)\} _n ,$ and track the ensuing errors, with the goal of having errors below the magnitude of the vector field $g(h(x_c),x_c).$ We use the discrepancy $\| \Gamma _{x_c}(w)-w\|_\eta $ as a proxy for the error $(h(x_c), x_c)-w$.
To initiate the iterates of $\Gamma _{x_c}(w)$ we use a function $w(t)$ that is a constant $w_0$ (to be specified). We then shadow the orbit with functions that are constant.
For the swarm systems studied in the present paper, zero eigenvalues have no associated Jordan blocks, so we focus the shadowing and  error-tracking in the case where $B$ is the zero matrix.

Let $x_c\in \mathcal N_c$. We evaluate  $\Gamma _{x_c}$ when applied to a constant function
 $w(t)= col (w_{0s}, w_ {0c}) \in X_\eta $.
 We get that
  \beq
  \label{constantGammaG1}
\Gamma_{x_c}(w)(t)=\left [
\begin{array}{c}
\left ( \bigintss _{-\infty }^t\; e^{A_{s}(t-\tau)} d\tau \right ) f(w_{0s}, w_ {0c})
\\
\\
 x_c +  \left ( \bigintss _0^t\; e^{B (t-\tau)} d \tau \right ) g(w_{0s}, w_ {0c})
\end{array}
\right ] =
\left [
\begin{array}{c}
-A_{s}^{-1}f(w_{0s}, w_ {0c})
\\
\\
 x_c+ t g(w_{0s}, w_ {0c})
\end{array}
\right ].
  \eeq
  Once the term linear in $t$ is removed, the last $c$ components of $\Gamma$ are always $x_c$, implying we should work in the family of constant functions $\displaystyle w(t)= col( w_{0s}, x_c ). $
We get a discrepancy
\beq
\Gamma_{x_c}\left ( \begin{array}{c} w_{0s}\\ x_c \end{array} \right )(t) - \left [
\begin{array}{c}w_{0s}\\
x_c \end{array}
\right ]
 =\left [
\begin{array}{c}
-A_{s}^{-1}(Aw_{0s}+f(w_{0s}, x_c)
\\
 t g(w_{0s},x_c)
\end{array}
\right ].
\label{constantGammaGeneral}
  \eeq

We conclude that the error made by using one iteration of $col (w_{0s}, x_c)$ in approximating $h(x_c)$ is comparable (or smaller) to the larger of $|g(w_{0s}, x_c)|$ and $| Aw_{0s}+f(w_{0s}, x_c)|$, which  reflect of how close the current iterate is from the nullclines of the system: there exists a constant $C$ such that
\beq
\label{eqHerrorGeneral}
|h(x_c)-w_{0s}|\leq C \max \{|g(w_{0s}, x_c)|, | A w_{0s}+f(w_{0s}, x_c)|\}.
\eeq

If the current error is not yet satisfactory, on the basis of (\ref{constantGammaG1}), the next approximation is produced by evaluating the operator at
$\displaystyle \left [
\begin{array}{c}
-A_{s}^{-1}f(w_{0s}, x_c)
\\
 x_c
\end{array}
\right ].$

Given $x_c$ in $\mathcal N_c,$ we aim to initiate or shadow the approximation with a point $(w_{0s}, x_c)$  close to the nullclines of the system. (For $x_c$ in $\mathcal N_c$, the nullcline equation $ A_s x+f(x, x_c) =0$ has a unique solution $x$, since  $A_s$ is non-singular. However, in high dimensional systems it may be impossible to solve explicitly.)

 \vskip5mm We apply the technique to the three-dimensional example (\ref{Taylorexample}),  using the notations of  (\ref{standardcentersetup}).
The $z$-axis is the stable subspace, thus $x_s=z$, with  matrix $A_s=-1$.
The zero eigenspace is the $(x,y)$-plane, thus $x_c=(x,y)$, with matrix $B=0.$

Use (\ref{standardGamma}):  evaluated at a {\it constant}  function $[ w_{0s}, (x,y)]^T$, $\Gamma_{(x,y)}$ equals
$$ \Gamma_{(x,y)}\left [ \begin{array}{l} w_{0s} \\x\\ y \end{array} \right ](t) =
 \left [ \begin{array}{l}
 x(y-x\sin x)(\sin x + x\cos x) + x \sin x\\ x+ t\;  x( y-x \sin x) \\ y-t\; y^2(y-w_{0s})
 \end{array} \right ].$$
 We use as $w_{0s}$ the value obtained from the equation $\dot z= 0$, namely
 $w_{0s}=\alpha (x,y)= x(y-x\sin x)(\sin x + x\cos x) + x \sin x. $ This simplifies the top component of $\Gamma_{(x,y)}$ to just $\alpha .$
 Note that the factor $(y-w_{0s})$ appearing in the 3rd component becomes $(y-x \sin x) (1-x(\sin x + x\cos x)).$
 \beq
 \label{etanormexample}
  \Gamma_{(x,y)}\left [ \begin{array}{c}
\alpha \\x\\ y \end{array} \right ] -
\left [ \begin{array}{l}
 \alpha\\ x\\ y \end{array} \right ] =t (y-x\sin x) \left [ \begin{array}{l}0 \\ x \\ -y^2(1-x\sin x-x\cos x) \end{array} \right ].
 \eeq
As a point in  $X_\eta$, the linear function of $t$ appearing in the right side of (\ref{etanormexample}) has norm  below $|y-x\sin x | \mathcal O (|x,y|), $ implying
 that the fixed point $F_{x,y}$  of $\Gamma_{(x,y)}$ is within $(y-x\sin x) \mathcal O (|x,y|)$ from $\alpha(x,y).$
 Since $|\alpha(x,y)-x\sin x|= (y-x\sin x) \mathcal O (|x,y|)$,
 we get that the center manifold map is
$h(x,y)= x\sin x + (y-x\sin x) \mathcal O (|x,y|), $ and the dynamics on  center manifold is described by:

\beq
\begin{array}{l}
\dot{x}= x(y-x\sin x) \\
\dot{y}= -y^2(y-x \sin x )\left ( 1+ \mathcal O (|(x,y)|) \right) .
\end{array}
\label{TaylorexampleReduced}
\eeq
We note that System $(\ref{TaylorexampleReduced})$ faithfully captures the locations of equilibrium  points,  $(x, x\sin x),$ and the behavior of the trajectories as converging to the origin or to an off-origin equilibrium point, depending on whether the initial condition is from $\{ (x_0, y_0)\; |\;  x_0=0 \; \mbox{ or } \; y_0< x_0 \sin x_0\}$ or not.
\footnote{
The phase plane of $(\ref{TaylorexampleReduced})$ splits into three invariant regions: the lower half plane,  the region with $0<y<x \sin x , $ and the region above the fixed point curve (i.e $y> x \sin x$).
One can 
verify that within the region $y<0$ the (Lyapunov) function $L_1=x^2+|y|=x^2-y$ decreases
%as $dL_1/dt= 2(y-x \sin x )(x^2+y^2)[ 1+ \mathcal O (y) ]$,
forcing orbits to accumulate to the origin. Within the region $0<y<x \sin x $, the function $L_2=x^2$ decreases (which bounds $y(t)$ by $x_0\sin x_0$) and all the $\omega$-limit points satisfy $y= x \sin x$. In the region
$y> x \sin x$ the function $L_3=y$ decreases (this also bounds the $x$ variable), with the $\omega$-limit points satisfying $y= x \sin x.$}

\vskip5mm
%%%%%%%%%%%%%%%%%%%%%%%%

 We conclude this section  by bounding the error introduced by applying our technique to system (\ref{standardcentersetup}) in regions close to the set of equilibrium points, its intended scope. In a practical setting, for a small $x_c$,  we begin the iteration with an initial guess (or a shadow from a prior iteration step) $x_s=\alpha (x_c)$ where $|\alpha (x_c)|\leq |x_c|$, to be consistent with $\alpha (x_c)$ approximating the zero-gradient function $h$ (a priori $h(x_c)=\mathcal O(|x_c|^2)).$ For (\ref{Taylorexample}) we used the nullcline $\dot z=0$  to construct the guess $\alpha $;  in Section \ref{SectionDegenerateRingState}, we will use the explicit formula of the map $h$ for equilibrium points and extend its domain from the set $E_c$ to the neighborhood $\mathcal N_c$. Both techniques lead to $\alpha(x_c)$ satisfying
  \beq
  \label{eqnearE}
   A_s\alpha(x_c)+f(\alpha(x_c), x_c) =\mbox{dist}(x_c,E_c) \mco(|x_c|),
  \eeq
where $\mbox{dist}(x_c, E_c)$ denotes the distance from a point $x_c\in \mathcal N _ c$ to the set of projected equilibriums $E_c.$ Note that  $\mbox{dist}(x_c, E_c)\leq |x_c|,$ since $0\in E_c$, but for points very close to $E $ we could have $\mbox{dist}(x_c, E_c)\ll |x_c|.$
Necessarily, guesses $\alpha $ satisfying (\ref{eqnearE}) are exact at equilibrium points, meaning $\alpha (x_c)=h(x_c)$ if $x_c\in E_c$, otherwise both
 satisfy the equation $A_sx_s+f(x_s, x_c)=0$, which has a unique solution $x_s$.

\begin{remark}
\label{RemarkApproxNullcline}
 If $\alpha :\mathcal N_c \to \R^s $ satisfies (\ref{eqnearE}) and $\alpha (x_c)\leq |x_c|$ for all $x_c\ \in \mathcal N_c$,
then
\beq
\label{equationherror}
 |h(x_c)-\alpha (x_c)|=\mbox{dist}(x_c,E) \mco(|x_c|)
  \eeq
  and the error in the approximation of the vector field $g$ of (\ref{standardcentersetupreduced}) is
$$ | g(h(x_c), x_c)-g(\alpha(x_c),x_c)|= \mbox{dist}(x_c,E) \mco(|x_c|^2). $$
\end{remark}

\begin{proof}

Let $x_c\in \mathcal N_c$ be fixed. Recall that $h(x_c)$ has the $s$-components of the fixed point $F_{x_c}$ of at $\Gamma_{x_c}$. We use $  \|F_{x_c}- (\alpha (x_c), x_c)\|_{\eta} \leq 2  \|\Gamma_{x_c}(\alpha (x_c), x_c)- (\alpha (x_c), x_c)\|_{\eta}.$

Use $(\ref{constantGammaGeneral})$  to simplify the latter.  We obtain that
\beq\label{eqnGammaAppendixAux1}
\Gamma_{x_c}(\alpha (x_c), x_c)(t) -(\alpha (x_c), x_c)=\left [
\begin{array}{c}
-A_{s}^{-1}(A_s \alpha (x_c)+f(\alpha (x_c), x_c)\\
t g(\alpha (x_c), x_c)
\end{array} \right ],
  \eeq
 having an $X_\eta$-norm that
 is comparable (or smaller) to the larger of $|g(\alpha (x_c), x_c)|$ and $| A_s \alpha (x_c)+f(\alpha (x_c), x_c)|$.
To complete the proof of (\ref{equationherror}), we need to show that  $|g(\alpha (x_c), x_c)|$ and $| A_s \alpha (x_c)+f(\alpha (x_c), x_c)|$ are $\mbox{dist}(x_c,E) \mco(|x_c|);$ the latter is, by assumption (\ref{eqnearE}).

For $x_c$, denote by $x_E$ (one of) the point(s) in $E_c$ that is nearest to $x_c $. Note that $x_E $ lies  in the closed ball  $ B(x_c ,|x_c|)$   and satisfies  $g(\alpha (x_E), x_E), x_E)=0.$
Since $|\alpha (x_E)|\leq | x_E|\leq 2|x_c|$ we conclude that the segment $S$
from $(\alpha (x_c), x_c)$ to $(\alpha (x_E), x_E)$ is contained in a ball centered at the origin of $R^{s+c}$ of radius $\mco (|x_c|)$, where the gradient of $g$ (which is linear or smaller near the origin) is also $\mco(|x_c|).$

For points along the segment $S$, for some constant $C$, we have  $|\nabla g|\leq C|x_c|$  and
$$ |g(\alpha (x_c),x_c)|= |g(\alpha (x_c), x_c)-g(\alpha (x_E), x_E)|\leq C|x_c| |x_c-x_E|= C|x_c| \mbox{dist}(x_c, E_c). $$
This proves the inequality (\ref{equationherror}), which we use next to  refine the error estimate for the vector field $g$. As above, for points on the segment from $(\alpha (x_c),x_c)$ to $(h(x_c),x_c)$ the gradient of $g$ is bounded by $C' |x_c|$ and
$$ | g(h(x_c),x_c)-g(\alpha (x_c),x_c)|\leq C'|x_c| |h(x_c)-\alpha(x_c)|= |x_c| \mbox{dist}(x_c, E_c)\mco (|x_c|).  $$
\end{proof}

%%%%%%%%%%%%%%%%%%%%%%%%%%%%%%%%%%%%%%%
%%%%%%%%%%%%%%%%%%%%%%%%%%%%%%%%%%%%%%%
%
% SECTION: DEGENERATE ROTATING STATES

\section{Stability of Degenerate Rotating States}\label{SectionDegenerateRingState}

In this section, we prove the Stability Theorem \ref{TheoremMainResultIntro} for the case of degenerate rotating states,  $\mathrm r_k(t) = \frac{1}{A}e^{iAt} e^{i  \theta_{0 k}}$, where $\theta_k \in \{ 0, \pi\}$. Necessarily, the number of particles $n$ must be even.   Set $$n =2N.$$

%WLOG
Assume that $\theta_{01}=\cdots = \theta_{0N} = 0$ and $\theta_{0 N+1}=\cdots = \theta_{0n} = \pi$. We begin in the $(\bar a, \bar b, \bar u, \bar w)$-coordinates around $\{\mathrm r_k \}$ as outlined in Section \ref{SectionRingStateChangeOfCoordinates}. The proof of stability consists of the following main steps, numbered to  match the corresponding subsections.

\begin{enumerate}
\item We change the $(\bar a, \bar b, \bar u,\bar w)$-coordinates to explicitly separate neutral and stable directions. We denote the neutral variables associated with eigenvalue $\lambda =0$ by $Z\in \mathbb R^{n-1}$ and with $\lambda = \pm i$ by  $\delta_1,\delta_2\in \mathbb R$. The stable variables will be denoted by $\be,\bd,\ba\in \mathbb R^{n-1}$ and $\gamma_1,\gamma_2\in \mathbb R$. We determine the differential equations governing their dynamics, which decouple the first $4n-2$ components from  the variables $\delta_1$ and $\delta_2$.
\item We  approximate the center manifold for the system in $(Z,\be,\bd,\ba,\gamma_1,\gamma_2)$, a.k.a the reduced  system, and find an approximation for the flow on the center manifold. %Theorem \ref{ThmApproximationTheorem}
\item We establish stability for the reduced system about the origin.
\item We reintroduce $\delta_1$ and $\delta_2$ and establish stability for the full system. % by treating them  as solutions to a linear system  driven by the output of the decoupled $(Z,\be,\bd,\ba,\gamma_1,\gamma_2)$-system.
\end{enumerate}

%%%%%%%%%%%%%%%%%%%%%
%
%.    Change of Coordinates
%

\subsection{The change of basis from $(\bar a,\bar b, \bar u, \bar w)$ to $(Z,\be,\bd,\ba,\gamma_1,\gamma_2, \delta_1, \delta_2)$}

\vskip1mm

\subsection*{Notation}
If $p$ is a positive integer, let $\mathbb 1_p \in \R^p  $ and $\mathbb 1_{p,p}\in \R^{p\times p}$ denote the vector and square matrix of dimension $p$ with all entries equal to 1,  that is
$$ \mathbb 1_p=(1, 1, \dots , 1)^{\scriptscriptstyle T}, \; \;
\mathbb 1_{p, p}=\mathbb 1_p \mathbb 1_p^{\scriptscriptstyle T}.$$
Let  $X=(1, \dots 1, -1, \dots -1)^{\scriptscriptstyle T}=col(\mathbb 1_N, -\mathbb 1_N). $  Note that $X$ has the same role as that of the vector $\bar c$ with components $\cos \theta _{0,k}$ in the non-degenerate setting.

 Given vectors $\bar x = (x_1, x_2, \ldots,x_p)^{\scriptscriptstyle T}$ and $\bar y = (y_1, y_2, \ldots, y_p)^{\scriptscriptstyle T}$, we use ``$\odot$''  to denote the element-wise product:  $\bar x \odot \bar y $ is the vector in $\R^p$ of components $x_ky_k.$
 Given and
 a real-variable function $f$,  denote by $f(\bar x)$  the vector $(f(x_1),\ldots, f(x_n))^{\scriptscriptstyle T}$ .

\vskip2mm

We proceed from the definition of $(\bar a, \bar b, \bar u, \bar w)$ in (\ref{DefineChangeCoordinates_ab}) applied to
$\theta_{01}=\cdots = \theta_{0N} = 0$ and $\theta_{0 N+1}=\cdots = \theta_{0n} = \pi$.
For a more compact expression of the nonlinear functions $\mathcal U, \mathcal W$
define $\mathbb s(t)\in \R^n$ and $\mathbb c(t)\in \R^n$ as \footnote{Note that $\displaystyle \left ((1+\mathbb c_k) +i \; \mathbb s_k \right) i e^{it}$ represents the velocity of particle $k$ in the original coordinates at time $(\mathrm a t)$ for $k$ in $1 \dots N$ and it is the opposite of the velocity for $k$  in $N+1 \dots 2N.$}
$$\mathbb s_k = u_k-b_k\mbox{ and }\mathbb c_k=a_k+w_k.$$
We get
\bdima%\label{eqn_abuw_sys_nonlinearities}
\begin{array}{l}
\mathcal U_k =  -(\sbb_k^2+2\cbb_k+\cbb_k^2)\sbb_k = -\mathbb{s}_k \left ((\mathbb{c}_k+1)^2 +\mathbb{s}_k^2 -1)\right ) \\
\mathcal{W}_k =  -(\sbb_k^2+\cbb_k^2)-(\sbb_k^2+2\cbb_k+\cbb_k^2)\cbb_k =
(-1)(\mathbb{c}_k +1)\left ( (\mathbb{c}_k+1)^2 +\mathbb{s}_k^2 -1\right )+2\mathbb{c}_k.
\end{array}
\edima
The nonlinear part of the vector field (\ref{eqnNewSystem_abpq_coord}) has block components $0, 0, \mathrm a\; \mathcal U, \mathrm a \; \mathcal W$ with
\begin{equation}
\label{eqn_abuw_sys_nonlinearities}
\begin{array}{l}
\mathcal U =  -\mathbb{s}\odot \left ((\mathbb{c}+\mathbb 1_n)^2 +\mathbb{s}^2 - \mathbb 1_n \right ) \\
\mathcal{W} =
(-1)(\mathbb{c} +\mathbb 1_n ) \odot \left ( (\mathbb{c}+ \mathbb 1_n)^2 +\mathbb{s}^2 -\mathbb 1_n \right )+2\mathbb{c}
\end{array}
\end{equation}

This subsection is dedicated to identifying a  basis of $\R^{4n}$ that separates the stable and center directions of (\ref{eqnNewSystem_abpq_coord}), and to establishing the equation of motion in the new coordinates, (\ref{eqnNewCoordsDegFull}) and (\ref{eqnNewCoordsJustDelta}).

The Jacobian for (\ref{eqnNewSystem_abpq_coord}) is given in   (\ref{eqJacobianGeneral}).  Using  $\sin(\theta_{0m}-\theta_{0k})=0$, we get:
\begin{equation}\label{eqJacobianDegenerate}
  J = \kbordermatrix{
    & \partial a & \partial b & \partial u & \partial w  \\
    \partial \dot a & \OO_n & \OO_n & \I_n & \OO_n  \\
    \partial \dot b & \OO_n & \OO_n & \OO_n & \I_n  \\
    \partial \dot u & C & \OO_n & \OO_n & 2\I_n  \\
    \partial \dot w & -2a\I_n & C & -2\I_n & -2a\I_n
  }
\end{equation}
where $C$ is the $n\times n$ matrix
  $$C =\frac{1}{n}\left[\begin{array}{cc} \mathbb 1_{\scriptscriptstyle N,N}& -\mathbb 1_{\scriptscriptstyle N,N}\\-\mathbb 1_{\scriptscriptstyle N,N}& \mathbb 1_{ \scriptscriptstyle N,N}   \end{array}\right ] .$$ % \frac{1}{n}\left[X,\ldots,X,-X,\ldots,-X\right].$$
To block-diagonalize the Jacobian matrix $J$, we mirror the construction from Section \ref{SectionNonDegenerateRingState}, relying on vectors in the kernel of the matrix $C.$ In the degenerate case,
we construct a basis for the kernel of $C$ in $\R^{2N}$ that reflects the symmetry of $C$, associated with
  the splitting of agents into polar opposite groups.

We begin with a  basis for  $ \ii_{\scriptscriptstyle N} ^ {\scriptscriptstyle \bot}= \{v\in \mathbb R^N: v^{\scriptscriptstyle T} \ii_{\scriptscriptstyle N} = 0\}$.
Arrange these basis vectors as columns of a matrix; denote the resulting $N\times (N-1)$-matrix by $V$.
Set
\begin{equation}\label{eqnMatrixV}
\mathbb V = \left [
\begin{array}{ccc}
 V &   \OO_{\scriptscriptstyle N,N-1} & \ii_{\scriptscriptstyle N} \\
    \OO_{\scriptscriptstyle N,N-1} & V& \ii_{\scriptscriptstyle N}
\end{array}
\right ]
\end{equation}

Notice that $\mathbb V$ is an $n\times (n-1)$-matrix satisfying
\begin{equation}\label{eq_CVequalsZero}
X^{\scriptscriptstyle T} \mathbb V=0 \; \mbox{ and } \;  C  \mathbb V = \OO_{n,n-1}, \; \mbox{ since } \ii_{\scriptscriptstyle N} ^{\scriptscriptstyle T} V= 0_{\scriptscriptstyle N-1}.
\end{equation}

\begin{remark}\label{RemarkOrthogonalBasis} The columns of the matrix $\mathbb V$ are orthogonal to the vector $X$ and together with $X$ they form a basis of $\mathbb R^{2N}$. Similarly, the matrix $[V \; \; \ii_N]$ is invertible; necessarily the last row of its inverse is $\frac{1}{N} \ii_{\scriptscriptstyle N}^{\scriptscriptstyle T}$. Denote by $T$ the $N-1$ by $N$ matrix whose rows are rows $1$ through $N-1$ of $[V \; \; \ii_{\scriptscriptstyle N}]^{-1}.$ Then
\beq
\label{eqnMatrixT}
[\mathbb V \;\;  X]^{-1}= \left [
\begin{array}{c} \mathbb T \\ \frac{1}{n}X^{\scriptscriptstyle T} \end{array} \right ], \; \mbox{ where } \;
\mathbb{T}= \left [
\begin{array}{cc} T & \OO_{\scriptscriptstyle N-1,N}\\ \OO_{\scriptscriptstyle N-1,N} & T \\ & \\\frac{1}{n}\ii_{\scriptscriptstyle N}^{\scriptscriptstyle T} & \frac{1}{n}   \ii_{\scriptscriptstyle N} ^{\scriptscriptstyle T}
\end{array} \right ].
\eeq
\end{remark}

The first step towards the diagonalization of  $J$ repeats the construction from Lemma \ref{LemmaNonDEigenspace}.
Let $ B_1=I_4\otimes\V, B_2= I_4\otimes X.$ Then, as in calculations (\ref{EqJacobianRestrictionB1}) and (\ref{EqJacobianRestrictionB2}), using $CX=X$, we get
\bdima
J B_1=\left[
\begin{array}{cccc}
 0 & 0 & 1 & 0 \\
 0 & 0 & 0 & 1 \\
 0 & 0 & 0 & 2 \\
 -2 a & 0 & -2 & -2 a \\
\end{array}
\right ] \otimes \V \; \mbox{ and }
J B_2= \left[ \begin{array}{cccc}
 0 & 0 & 1 & 0 \\
 0 & 0 & 0 & 1 \\
 1 & 0 & 0 & 2 \\
 -2 a & 1 & -2 & -2 a \\
\end{array}
\right]\otimes X
\edima
The  $4\times 4$ matrices, denoted $J_1$ (left) and $J_2$ (right)  can be further block diagonalized to separate the eigenspaces $\lambda =0$ and $\lambda =\pm i$ from those with $\mbox{Re} \lambda<0$:
\begin{flushleft}
\begin{equation}\label{NewBlocks}
\mbox{If } \; P_1=\left[
\begin{array}{cccc}
 0 & 1 & 0 & 0 \\
 1 & 0 & \frac{1}{2} & 0 \\
 0 & 0 & 1 & 0 \\
 0 & 0 & 0 & 1 \\
\end{array}
\right], \mbox{ then } P_1^{-1}\; J_1 \; P_1= \left[
\begin{array}{c|ccc}
 0 & 0 & 0 & 0 \\
 \hline
 0 & 0 & 1 & 0 \\
 0 & 0 & 0 & 2 \\
 0 & -2 a & -2 & -2 a \\
\end{array}
\right],
\end{equation}
\end{flushleft}
\begin{flushleft}
\begin{displaymath}% {equation}
\mbox{If }  P_2=\left [
\begin{array}{cccc}
 -a & -1 & 0 & -1 \\
 1-a^2 & -a & 1 & 0 \\
 1 & 0 & 1 & 0 \\
 0 & 1 & 0 & 1 \\
\end{array}
\right], \mbox{ then } P_2^{-1}\; J_2 \; P_2= \left[
\begin{array}{cc|cc}
 -a & 1 & 0 & 0 \\
 a^2-1 & -a & 0 & 0 \\
 \hline
 0 & 0 & 0 & 1 \\
 0 & 0 & -1 & 0 \\
\end{array}
\right].
\end{displaymath}%{equation}
\end{flushleft}
The $2\times 2$ upper block in the diagonalization  of $P_2$ will appear later in the equation for $\dot \gamma$, so we  denote it by $J_\gamma$. Denote by $B_3$ the $3\times 3$ block in (\ref{NewBlocks}). We get:
\beq
B_3=\left[
\begin{array}{ccc}
 0 & 1 & 0 \\
 0 & 0 & 2 \\
 -2 a & -2 & -2 a \\
\end{array}
\right], \; \; B_3^{-1}=\left[
\begin{array}{ccc}
 -\frac{1}{a} & -\frac{1}{2} & -\frac{1}{2 a} \\
 1 & 0 & 0 \\
 0 & \frac{1}{2} & 0 \\
\end{array}
\right], \mbox{ and }
\eeq
\beq \label{eq_Jginv}
 J_\gamma =\left[
\begin{array}{cc}
 -a & 1 \\
 a^2-1 & -a \\
\end{array}
\right], \; \; J_\gamma ^{-1}=\left[
\begin{array}{cc}
 -a & -1 \\
 1-a^2 & -a \\
\end{array}
\right].
\eeq

We introduce the change of coordinates  $Z,\be,\bd,\ba \in \mathbb R^{n-1}$, and $\gamma_1,\gamma_2,\delta_1,\delta_2\in \mathbb R$ to match the $1+3$ and $2+2$ sub-block structures of  (\ref{NewBlocks}):
  $$ col(\bar a,\bar b, \bar u,\bar w) = P \; col \left( Z,(\be,\bd,\ba),(\gamma_1,\gamma_2),(\delta_1,\delta_2) \right ),$$
where the  transformation matrix $P$ is $[ P_1 \otimes \V \; \; \; P_2 \otimes X].$ Equivalently:
\begin{equation}\label{eqTransformationMatrix}
  P = \kbordermatrix{
    & Z & \be & \bd & \ba & \gamma_1 & \gamma_2 & \delta_1 & \delta_2  \\
     \bar a & \OO_{n,n-1} &  \mathbb V & \OO & \OO  & -aX & -X & \oo_n & - X   \\
    \bar b & \mathbb V  & \OO & \frac{1}{2}\mathbb V &  \OO & (1-a^2)X & -a X & X & \oo_n  \\
     \bar u & \OO & \OO & \mathbb V & \OO  & X & \oo_n & X & \oo_n\\
    \bar w & \OO & \OO & \OO & \mathbb V & \oo_n & X & \oo_n & X
  },
\end{equation}
where the zero blocks of $P$ with unstated dimensions are $n \times (n-1),$ just like $  \mathbb V.$

\begin{proposition}\label{PropZcoord}
 In the coordinates introduced by the matrix  $P$, the  system of differential equations (\ref{eqnNewSystem_abpq_coord}), also given in (\ref{compactabuw}), becomes:
\begin{equation}\label{eqnNewCoordsDegFull}
 \begin{array}{cl}
\dot Z & = \;  \OO_{n-1, n-1} \;  Z\; + \;  (-\mathrm a) \; \frac{1}{2}\; \mathbb  T\; \mathcal{U} \\[.1cm]
%\hline
\left ( \begin{array}{l}
 \dot \be \\ \dot \bd \\  \dot\ba \end{array} \right) &=
\left [ \begin{array}{ccc} 0 & I_{n-1} & 0\\ 0&0& 2I_{n-1} \\-2 \mathrm  a I_{n-1} & -2I_{n-1} &-2\mathrm  a I_{n-1} \end{array} \right ]\left ( \begin{array}{l} \be \\  \bd \\ \ba \end{array} \right) +
\left ( \begin{array} {l}0\\ \mathrm a \; \mathbb T\mathcal{U}\\  \mathrm a \; \mathbb T\mathcal{W}\end{array} \right ) \\[.1cm]
%\hline
\left ( \begin{array}{l}\dot \gamma_1\\ \dot\gamma_2\\ \end{array} \right ) & =
\left [ \begin{array}{cr}-a& 1\\a^2-1& -a \end{array} \right ]
\left ( \begin{array}{l} \gamma_1\\ \gamma_2\\ \end{array} \right )
+
\left ( \begin{array}{l}-\frac{1}{n}X^{\scriptscriptstyle T}  \mathcal W\\[.1cm]
\frac{1}{n}X^{\scriptscriptstyle T} (\mathcal U + \mathrm a \; \mathcal W) \\
\end{array} \right )
\end{array}
\end{equation}
%
%. deltas
%
\begin{equation}\label{eqnNewCoordsJustDelta}
 \left ( \begin{array}{l}
  \dot\delta_1 \\  \dot\delta_2 \end{array} \right ) =
\left[\begin{array}{cc}0 & 1 \\ -1 & 0\end{array} \right]
\left ( \begin{array}{l}
\delta_1 \\  \delta_2 \end{array} \right )%
+
\left ( \begin{array}{c}
\frac{1}{n}X^{\scriptscriptstyle T} (\mathrm a \; \mathcal U + \mathcal W)  \\
- \frac{1}{n}X^{\scriptscriptstyle T} \mathcal U
\end{array} \right ), \; \mbox{ where }
\end{equation}
the nonlinear functions $\mathcal U =\mathcal U(\mathbb c, \mathbb s)$  and $\mathcal W =\mathcal W (\mathbb c,\mathbb s)$  were defined in (\ref{eqn_abuw_sys_nonlinearities}) as
$\displaystyle
\mathcal U =  -\mathbb{s}\odot \left ((\mathbb{c}+\mathbb 1_n)^2 +\mathbb{s}^2 - \mathbb 1_n \right ) $ and
$\mathcal{W} =
(-1)(\mathbb{c} +\mathbb 1_n ) \odot \left ( (\mathbb{c}+ \mathbb 1_n)^2 +\mathbb{s}^2 -\mathbb 1_n \right )+2\mathbb{c}, $ with
\begin{equation}\label{eqn_cs_in_newcoordinates}
\begin{array}{l}
\mathbb c = \mathbb V \be+\mathbb V \ba -\mathrm a \; \gamma_1 X, \\
\mathbb s  = -\mathbb V Z+\frac{1}{2}\mathbb V \bd + (\mathrm a^2 \gamma_1+\mathrm a\; \gamma_2 )X.
\end{array}
\end{equation}
%
%%%%%%%%%%
\end{proposition}

\begin{proof}
We use the connection between $\mathbb V $ and $\mathbb T$ from (\ref{eqnMatrixT}) to find the inverse of the transformation $P=[ P_1 \otimes \V \; \; \; P_2 \otimes X]$. The matrix $P^{-1}$ consists of the blocks $P_1^{-1} \otimes \mathbb T$ and $ P_2^{-1}\otimes \frac{1}{n} X^{\scriptscriptstyle T}$ :

\begin{equation}\label{eqnPInverse}
\left[ \begin{array}{l}Z\\ \be\\ \bd \\ \ba \\ \gamma_1 \\ \gamma_2 \\ \delta_1 \\ \delta 2 \end{array} \right] =
P^{-1} \left [\begin{array}{l} \bar a \\ \bar b \\ \bar u  \\ \bar w \end{array} \right],  \;
 P^{-1} =
 \left[ \begin{array}{l}
 \kbordermatrix{
 & \bar a & & \bar b & & \bar u & \bar w \\
  Z & 0 & & 1 & & -\frac{1}{2} & 0 \\
\be & 1 && 0 & & 0 & 0 \\
 \bd & 0 && 0 & & 1 & 0  \\
 \ba& 0 &&  0 & & 0 & 1} \otimes \mathbb T
 \\
  \kbordermatrix{
&  &  &  &  \\% & \bar a & \bar b & \bar u & \bar w \\
\gamma_1& -\frac{1}{a} & 0 & 0  & -\frac{1}{a} \\
 \gamma_2 & 1 & -\frac{1}{a} & \frac{1}{a} & 1 \\
\delta_1 &  \frac{1}{a} &0 & 1 & \frac{1}{a} \\
\delta_2 &  - 1 & \frac{1}{a} & -\frac{1}{a} & 0
} \otimes \frac{1}{n}X^{\scriptscriptstyle T} \end{array}
 \right ]
  \end{equation}
 The linear part of the system (\ref{compactabuw}) has matrix $P^{-1}JP$ , with blocks $\OO, B_3\otimes I_{n-1}, J_\gamma$ associated with
 the coordinates $Z, col(\be, \bd, \ba), (\gamma_1, \gamma_2)$, consistent with (\ref{eqnNewCoordsDegFull}), and the block $\displaystyle \left [ \begin{array}{ll} 0 &1\\-1&0 \end{array}\right ]$ associated with $\delta _1, \delta_2,$ consistent with (\ref{eqnNewCoordsJustDelta}).

 The nonlinearities of the system, obtained by applying $P^{-1}$ to the $col(0, 0, \mathrm a\; \mathcal U,  \mathrm a \; \mathcal W)$ system in (\ref{compactabuw}), are listed in the table below: the top row is new  coordinate, the bottom row is the corresponding nonlinearity:
 \bdima
 \begin{array}{c|ccc|cc | cc}
 \dot Z& \dot \be & \dot \bd & \dot \ba & \dot \gamma_1& \dot \gamma_2& \dot  \delta_1 & \dot \delta_2 \\
&&&& &&& \\
\frac{- \mathrm a}{2}\mathbb  T\mathcal{U} & 0 & \mathrm a   \mathbb T\mathcal{U} &  \mathrm a\mathbb T\mathcal{W}&
-\frac{1}{n}X^{\scriptscriptstyle T} \mathcal W&
\frac{1}{n}X^{\scriptscriptstyle T}(\mathcal U + \mathrm a  \mathcal W) &
\frac{1}{n}X^{\scriptscriptstyle T} (\mathrm a  \mathcal U +\mathcal W) &
-\frac{1}{n}X^{\scriptscriptstyle T} \mathcal U.
 \end{array}
\edima
This proves the validity of (\ref{eqnNewCoordsDegFull}) and (\ref{eqnNewCoordsJustDelta}).

Finally, we calculate $\mathbb c=\bar a+\bar w $ and $\mathbb s=\bar u-\bar b$. Based on (\ref{eqTransformationMatrix})
the  change of coordinates defines %
\bdima % \label{eqn_abuw_cs_in_newcoordinates}
\begin{array}{l}
\bar a  =\mathbb V \be - \mathrm a \; \gamma_1 X-\gamma_2 X-\delta_2 X  \\
\bar b  = \mathbb V Z+\frac{1}{2} \mathbb V \bd +(1-\mathrm a^2)\gamma_1 X - \mathrm a\; \gamma_2 X + \delta_1 X\\
\bar u  = \mathbb V \bd + \gamma_1 X +\delta_1 X \\
\bar w  = \mathbb V \ba +\gamma_2 X + \delta_2 X, \; \mbox{ thus }
\end{array}
\edima
$ \mathbb c =\bar a+\bar w = \mathbb V \be+\mathbb V \ba -\mathrm a\gamma_1 X $ and
$\mathbb s =\bar u-\bar b = -\mathbb V Z+\frac{1}{2}\mathbb V \bd +(\mathrm a ^2 \gamma_1+ \mathrm a \gamma_2 )X .$%

\end{proof}
The characteristic polynomials of the blocks $ B_3\otimes I_{n-1}$ and $J_\gamma$, given by
 $(\lambda^3+2a \lambda^2+4\lambda +4a)^{n-1}$, and   $\lambda^2 +2a \lambda +1$, respectively,
 have all roots in the negative half plane.
  Thus, the variables $\be$, $\bd$, $\ba$, $\gamma_1$, and $\gamma_2$ capture the $3n-1$ stable directions of the system consisting of  (\ref{eqnNewCoordsDegFull}) and (\ref{eqnNewCoordsJustDelta}). The variables $Z$, $\delta_1$, $\delta_2$ represent the neutral directions of the system.

The nonlinear functions of  (\ref{eqnNewCoordsDegFull}) depend only on $\mathbb c$ and $\mathbb s$, which are independent of $\delta_1, \delta_2, $ thus:

\begin{lemma} The vector field governing the variables $\{Z,\be,\bd,\ba,\gamma_1,\gamma_2\}$ in (\ref{eqnNewCoordsDegFull}) is independent of $\delta_1$ and $\delta_2$; that is, it decouples from $\delta_1$ and $\delta_2$.
\end{lemma}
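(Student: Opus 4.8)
The plan is to read off the claim directly from the structure of the equations of motion already written down. The key observation is that the nonlinear terms $\mathcal U_k$ and $\mathcal W_k$ of the original $(a,b,u,w)$-system, given explicitly in (\ref{eqn_abuw_sys_nonlinearities}), are functions of the combinations $\mathbb s_k = u_k - b_k$ and $\mathbb c_k = a_k + w_k$ alone. So it suffices to show that $\mathbb s_k$ and $\mathbb c_k$ do not depend on $\delta_1$ or $\delta_2$ once we pass to the new coordinates via the transformation matrix $P$ in (\ref{eqTransformationMatrix}). This is exactly the content of the last two lines of (\ref{eqn_abuw_cs_in_newcoordinates}): $\mathbb c = \mathbb V\be + \mathbb V\ba - \gamma_1 X$ and $\mathbb s = -\mathbb V Z + \tfrac12 \mathbb V\bd + (\gamma_1+\gamma_2)X$, and neither expression contains $\delta_1$ or $\delta_2$.

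First I would verify those two identities, which is a one-line computation from the rows of $P$ in (\ref{eqTransformationMatrix}): using $\mathbb s = u - b$ one gets $\mathbb s = (\mathbb V\bd + \gamma_1 X + \delta_1 X) - (\mathbb V Z + \tfrac12\mathbb V\bd - \gamma_2 X + \delta_1 X)$, and the two $\delta_1 X$ terms cancel, leaving $\mathbb s = -\mathbb V Z + \tfrac12 \mathbb V\bd + (\gamma_1+\gamma_2)X$; similarly $\mathbb c = a + w = (\mathbb V\be - \gamma_1 X - \gamma_2 X - \delta_2 X) + (\mathbb V\ba + \gamma_2 X + \delta_2 X) = \mathbb V\be + \mathbb V\ba - \gamma_1 X$, with the $\gamma_2 X$ and $\delta_2 X$ terms cancelling. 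So $\mathbb s$ and $\mathbb c$, hence $\mathcal U$ and $\mathcal W$, are functions of $(Z,\be,\bd,\ba,\gamma_1,\gamma_2)$ only.

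Next I would combine this with the explicit form of the right-hand side of (\ref{eqnNewCoordsDegFull}). The linear part there is the block matrix acting on $(Z,\be,\bd,\ba,\gamma_1,\gamma_2)^T$, which plainly does not involve $\delta_1,\delta_2$ (the $\delta$-block of $J_{deg}$ in (\ref{eqnJacobianDegNewCoords}) is decoupled). The nonlinear part is a fixed linear image (under $-\tfrac12\mathbb T$, $\mathbb T$, $-\tfrac1n X^T$, $\tfrac1n X^T$) of $\mathcal U$ and $\mathcal W$, which by the previous paragraph depend only on $(Z,\be,\bd,\ba,\gamma_1,\gamma_2)$. Therefore the entire vector field governing $(\dot Z,\dot\be,\dot\bd,\dot\ba,\dot\gamma_1,\dot\gamma_2)$ is a function of $(Z,\be,\bd,\ba,\gamma_1,\gamma_2)$ alone, which is the assertion of the lemma.

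There is essentially no obstacle here: the proof is a bookkeeping argument that amounts to checking cancellation of the $\delta_1 X$ and $\delta_2 X$ terms in $u-b$ and $a+w$, and then invoking the already-derived form (\ref{eqn_abuw_sys_nonlinearities}) of the nonlinearities. The only thing one must be careful about is to cite the right equations — (\ref{eqn_abuw_sys_nonlinearities}) for the dependence of $\mathcal U,\mathcal W$ on $\mathbb s,\mathbb c$, (\ref{eqn_abuw_cs_in_newcoordinates}) for the dependence of $\mathbb s,\mathbb c$ on the new coordinates, and (\ref{eqnNewCoordsDegFull}) for the structure of the reduced system — and to note explicitly that the linear block of (\ref{eqnNewCoordsDegFull}) is $\delta$-free by construction of $P$.
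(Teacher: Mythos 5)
Your argument is exactly the paper's: note from (\ref{eqn_abuw_sys_nonlinearities}) that $\mathcal U_k,\mathcal W_k$ depend only on $\mathbb s_k=u_k-b_k$ and $\mathbb c_k=a_k+w_k$, observe from (\ref{eqn_abuw_cs_in_newcoordinates}) that $\mathbb s$ and $\mathbb c$ are free of $\delta_1,\delta_2$, and combine this with the block structure of (\ref{eqnNewCoordsDegFull}). The only difference is that you spell out the $\delta_1 X$, $\delta_2 X$ cancellations, which the paper leaves implicit in (\ref{eqn_abuw_cs_in_newcoordinates}); the content is identical and correct.
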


For particles in a  rotating state $R_x+iR_y+a e^{i( \theta_{0k}+At)}$ near the degenerate state with angles $0$ and $\pi$, the components $(\delta _1,\delta _2)$ are equal to the center of mass $(R_x, R_y)$. Thus, decoupling $\delta _1$ and $\delta _2$ suppresses the location of the center of rotation of the limit cycle (once we prove that said limit cycle exists).

We study the stability of the reduced $\{Z,\be,\bd,\ba,\gamma_1,\gamma_2\}$ system first, and then show that  incorporating the variables $\delta_1$ and $\delta_2$  maintains the stability of the origin.
We think of the $(\delta_1, \delta_2)$-components as being solutions to a linear system  driven by the output of the decoupled $(Z,\be,\bd,\ba,\gamma_1,\gamma_2)$-system.

\subsection{Approximating the center manifold for the reduced system.}\label{SubsectionApproxCenterManifold}

In this  section we approximate the center manifold map $h=h(Z)$ of the system (\ref{eqnNewCoordsDegFull}) near the origin using the technique from Section \ref{SectionNotTaylor}.
For brevity, we denote by $Y$ the $(3n-1)$-dimensional vector whose components are those of $\be, \bd,\ba$, and $\gamma $,  of the reduced $(Z, Y)$ system.
All our arguments apply while trajectories remain in a small neighborhood of the origin in $\mathbb R^{4n-2}$, therefore assuming that
 $Z$ itself is in a small enough neighborhood of the origin in $\mathbb R^{n-1}$.

 The change of coordinates (\ref{eqnPInverse}) defines $Z=\mathbb T (\bar b -\frac12 \bar u)$. In keeping with the three-block vertical structure of the matrix $\mathbb T$ (see (\ref{eqnMatrixT})),
 and the two-block structure of $\mathbb V$, we distinguish between upper and lower components of our variables:

\begin{definition}\label{DefEnergyFunction}
Let $Z=(z_1,\ldots,z_{n-1})^T$ be in a neighborhood of $0\in  \mathbb R^{n-1}$.
\begin{enumerate}

\item  Denote by $Z_U$ (upper) and $Z_L$  (lower) the $(N-1)$-dimensional vectors $Z_U=(z_1, \dots z_{N-1})^{\scriptscriptstyle T}$,
$Z_L=(z_N, \dots z_{2N-2})^{\scriptscriptstyle T}$ to  get
$$ Z = \left ( \begin{array}{c} Z_U \\ Z_L \\  z_{n-1} \end{array}\right).$$

\item Denote by $ |Z| = (\sum_{i=1}^{n-1} |z_i|^2) ^{1/2}$, the
       Euclidean norm of $Z$. Denote by $|Z|_r$ the {\it reduced norm}, actually a seminorm, $|Z|_r^2 = |Z_U|^2+|Z_L|^2=|Z|^2-z_{n-1}^2.$

\item
 Define the variables $\Theta= \Theta (Z) = \{\theta_k\}_{k=1}^n$ implicitly as
$\sin(\theta_i) = (\mathbb V Z)_i$. Denote by $\Theta_U=(\theta_1, \dots \theta_N)^{\scriptscriptstyle T}$ and $\Theta_L=(\theta_{N+1}, \dots \theta_{2N})^{\scriptscriptstyle T}$.

 Using the earlier function shorthand, $\displaystyle
\sin(\Theta) = (\sin(\theta_1),\ldots, \sin(\theta_n))^{\scriptscriptstyle T}, $ and $\displaystyle  \cos(\Theta) = (\cos(\theta_1),\ldots, \cos(\theta_n))^{\scriptscriptstyle T}, $ we obtain
 \beq \label{VandTheta}
  \mathbb V Z= \sin(\Theta) = \left(\begin{array}{c} \sin(\Theta_U)  \\ \sin(\Theta_L) \end{array}\right)
 =\left(\begin{array}{c} V\;  Z_U+ z_{n-1}\mathbb 1_{\scriptscriptstyle N} \\ V\; Z_L+ z_{n-1}\mathbb 1_{\scriptscriptstyle N}\end{array}\right).
 \eeq

\item
 In view of Remark \ref{RemarkOrthogonalBasis}, the vector $X$  and  the columns of the matrix $\mathbb V$ form a basis  in $\mathbb R^{n}$.  Define the scalar $\mathcal E = \mathcal E(Z)$ and vector $\alpha=(\alpha_i(Z))_{i=1,\ldots,n-1}$ as the coefficients of $\cos(\Theta)-\ii_n$ in this basis.
 %\{$\mathbb V, X \}$.
  Analogous to the splitting of $Z$  we denote by $\alpha_{\scriptscriptstyle U}, \alpha _{\scriptscriptstyle L}$ the components of $\alpha $ from $1$ to $N-1$, and from $N$ to $2N-2$, respectively, to represent
\begin{equation} \label{eqnEnergyFunctionDef}
 \cos (\Theta) - \ii_{n}= \mathbb V  \alpha + \mathcal E X =
  \left(\begin{array}{c} V\;  \alpha_{\scriptscriptstyle U} \\ V\;  \alpha_{\scriptscriptstyle L} \end{array}\right) + \alpha _{n-1} \mathbb 1_n + \mathcal E X.
\end{equation}
\end{enumerate}
\end{definition}

% Energy Function Definition

\begin{proposition} \label{remarkPropertiesOfEnergy} %was Remark
The scalars $ z_{n-1}, \alpha _{n-1}$, and $ \mathcal E(Z)$
%capture the mean values of the sine and cosine of $\Theta _U$ and $\Theta_L$:
satisfy:

\begin{equation}\label{eq_CosMean}
\begin{array}{ll}
z_{n-1} =& \frac{1}{N}\sum \limits_{k=1}^N \sin \theta _k = \frac{1}{N} \sum \limits_{l=1+N}^{2N} \sin \theta_l , \\
\alpha _{n-1}= &\frac{1}{n} \sum \limits_{k=1}^n \cos \theta _k \; -1, \\
\mathcal E(Z)=& \frac{1}{n} X^T (\cos ( \Theta) - \ii_{n})%=\frac{1}{n} X^T \cos ( \Theta)
= \frac{1}{n}(\sum \limits_{k=1}^N \cos \theta _k -  \sum \limits_{l=1+N}^{2N} \cos \theta_l ).
\end{array}
\end{equation}
\end{proposition}
\begin{proof}
We use orthogonality of $X$ to the columns of $\mathbb V$ (which include  $\mathbb 1_n$), and the orthogonality of $\mathbb 1_N$  to the  columns of $V$.
The $z_{n-1}$  equation  follows from applying dot product with $\mathbb 1_N$  to   $\displaystyle \sin(\Theta_U) = VZ_U+z_{n-1}\ii_N$ and  $\displaystyle \sin(\Theta_L) = VZ_L+z_{n-1}\ii_N.$
For $\alpha _{n-1}$, use  $\displaystyle \cos ( \Theta) - \ii_{n}= \left[\begin{array}{c} V\;  \alpha_U \\ V\;  \alpha_L \end{array}\right] + \alpha _{n-1} \mathbb 1_n + \mathcal E X$, and the dot product with $\mathbb 1_n$. To find $\mathcal E(Z), $ take dot product with $X$ in (\ref{eqnEnergyFunctionDef}).
\end{proof}

%%%%%%%%%%%%%%
% Graphics

\begin{remark}\label{RemarkGeometricMeaning}
To illustrate the geometrical meaning of $  \Theta$ and $z_{n-1}$,   we consider a perturbation given by $( e^{i\omega _k}\frac{1}{A} e^{itA})_{k=1}^N$ and $  (e^{i\omega _k}\frac{1}{A} e^{i\pi}e^{iAt})_{k=1+N}^n$ of the state
$(\frac{1}{A} e^{iAt})_{k=1}^N$, $(\frac{1}{A} e^{i\pi}e^{iAt})_{k=N+1}^n$, that is itself a rotating state. Let $\bar \omega =(\omega_1, \dots \omega _n)^{\scriptscriptstyle T}.$
We will show in the calculation below that $\Theta =\bar \omega$, meaning that the variables $\{\theta_i\}$ capture the perturbation angles relative to those of the degenerate rotating state, as illustrated in Figure (\ref{figureGeometryTheta}).

From (\ref{DefineChangeCoordinates_ab}), in the  $(\bar a, \bar b, \bar u, \bar w)$-coordinate system  $\bar a = \cos(\bar \omega)-\mathbb 1$, $\bar b=\sin(\bar \omega)$, and $\bar u=\bar w=0$. Using the transformation matrix $P^{-1}$ given in (\ref{eqnPInverse}), we obtain $Z = \mathbb T \sin(\bar \omega)$. Since the last row of the matrix $\mathbb T$ is $\frac{1}{n} \ii_n^{\scriptscriptstyle T}$, we have that $z_{n-1}= \frac{1}{n}\ii_n ^{\scriptscriptstyle T} \sin(\bar \omega).$

The perturbed configuration under consideration is a rotating state, thus the polar angles $\{\omega_k\}$ satisfy
$\sum_{k=1}^N\left ( e^{i\omega_k} + e^{i(\pi+\omega_{k+N})} \right )= 0$. We conclude that
$$\frac{1}{N}\sum_{k=1}^N \sin \omega_k = \frac{1}{N}\sum_{k=1}^N  \sin \omega_{k+N}= z_{n-1}, \; \mbox{ and } X^{\scriptscriptstyle T} \sin(\bar \omega)=0.$$
Using block notation,
$ \displaystyle \left [\begin{array}{l} Z\\0\end{array}\right ]=
\left [\begin{array}{l} \mathbb T \sin(\bar \omega ) \\ \frac{1}{n}X^{\scriptscriptstyle T} \sin(\bar \omega) \end{array}\right ]=
\left [\begin{array}{l} \mathbb T  \\ \frac{1}{n}X^{\scriptscriptstyle T}  \end{array}\right ]\sin(\bar \omega ).$
Multiplying by the inverse of $\displaystyle \left [\begin{array}{l} \mathbb T  \\ \frac{1}{n}X^{\scriptscriptstyle T}  \end{array}\right ]$, which is $[\mathbb V, X]$ , per (\ref{eqnMatrixT}), we get
$ \sin(\bar \omega)= \mathbb V Z+0.$

From part (3) of Definition \ref{DefEnergyFunction},  $\sin(\Theta) = \mathbb V Z =\sin(\bar \omega)$, implying $\Theta =\bar  \omega$.

 Moreover, if the perturbed state is degenerate, with agents on a slanted diameter of polar angle $\omega_1$, then $Z_U= Z_L=0, z_{n-1}=\sin\omega_1$ and $\alpha_{n-1}=\cos(\omega_1)-1.$
\end{remark}

\begin{figure}[h!]
\centering
\includegraphics[width=.9\textwidth]{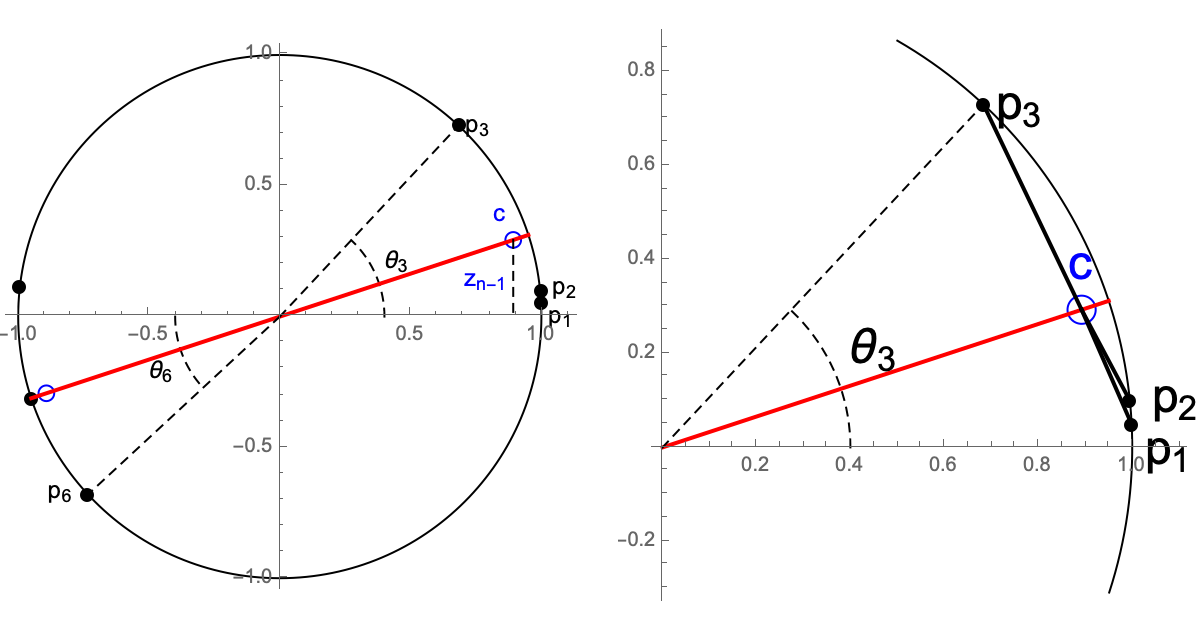}%
\caption{\label{Figure6Particles} A perturbation of the degenerate rotating state of six particles into a non-degenerate rotating state. The particles split into two groups: one appears only in the left image and the other  appears in both. The center of mass of each group is depicted as a hollow circle on the slanted diameter (red solid) line. These centers are opposites of each other, and have $y$-coordinates equal to $\pm z_{n-1}$.
The variables $\theta$'s represent the perturbation of the polar angles relative to the $x$-axis. The quantity $\mcd_U = |p_3-c|^2+|p_2-c|^2+|p_1-c|^2$ captures the dispersion of the right group and will be used as a Lyapunov function in the proof of stability.}
\label{figureGeometryTheta}
\end{figure}

%%%%
 %
 % Big-O estimates

\begin{lemma}
\label{LemmaBigOEstimates} For  $Z$ in a small neighborhood of $ \mathbb R^{n-1}$, the vector-valued   $\Theta(Z)$, $\alpha (Z) $,  and scalar  function $\mce(Z)$ are well defined and depend smoothly on $Z.$ Moreover
\begin{enumerate}
\item $\sin(\Theta) = \mathcal O(|Z|)$,
\item $\sin(\Theta)-z_{n-1}\ii_n= \mathcal O(|Z|_r), \; \mbox{ and } \; \sin^2(\Theta)-z^2_{n-1}\ii_n = \mathcal O(|Z|_r|Z|)$,
%\item $\sin^2(\Theta)-z^2_{n-1}\ii_n = \mathcal O(|Z|_r|Z|)$,
\item $\mce(Z) = \mathcal O(|Z|_r |Z|)$,
\item  $\cos(\Theta) - (1+\alpha_{n-1})\ii_n=\mathcal O(|Z|_r |Z|)$.
\end{enumerate}
\end{lemma}
\begin{proof} From $\sin(\Theta) = \mathbb V Z$, we get  $|\sin(\Theta)| = \mathcal O(|Z|)$. Additionally, $\sin(\Theta_U) = VZ_U+z_{n-1}\ii_N$  and $\sin(\Theta_L) = VZ_L+z_{n-1}\ii_N$. Hence, $\sin(\Theta)-z_{n-1}\ii_n= \mathcal O(|Z|_r)$, which, in particular, implies that  $\sin^2(\Theta)-z^2_{n-1}\ii_n = \mathcal O(|Z|_r|Z|)$.

Using the triangle inequality, we get that  $$|\sin(\theta_i)-\sin(\theta_j)| \leq |\sin(\theta_i)-z_{n-1}|+|\sin(\theta_j)-z_{n-1}|\leq |VZ_U| + |VZ_L| = \mathcal O(|Z|_r).$$
Now we need the following fact: $\frac{d}{dt}\sqrt{1-t^2} = \mathcal O(t)$. Applying the mean value theorem to the function $\sqrt{1-t^2}$, we obtain that
\begin{multline*}|\cos(\theta_i)-\cos(\theta_j)| = |\sqrt{1-\sin^2(\theta_i)} - \sqrt{1-\sin^2(\theta_j)}| \\ =  \mathcal O(|\sin(\Theta)|) |\sin(\theta_i)-\sin(\theta_j)|  = \mathcal O(|Z|_r |Z|).
\end{multline*}
It follows that $$\cos(\theta_k) - \alpha_{n-1} = \frac{1}{n}\sum_{m=1}^n(\cos(\theta_k)-\cos(\theta_m)) = \mathcal O(|Z|_r |Z|).$$
Similarly,
$\mathcal E(Z) = \frac{1}{n}\sum_{m=1}^{N}(\cos(\theta_m)-\cos(\theta_{N+m})) = \mathcal O(|Z|_r |Z|). $
\end{proof}

%%%%%%%%%%%%%%%%%%%%%%%%%%
%%%%%%%%%%%%%%%%%%%%%%%%%
%
%  Equilibrium points
%

Our subsequent calculations frequently rely on the relationship between $\mathbb V$ and $\mathbb T$, introduced in (\ref{eqnMatrixT}), particularly
\beq \label{EqSimplifyTV}
\mathbb T \;  \mathbb V= I_{n-1} \; \; \mbox{ and }  \mathbb T\; X =0.
\eeq

Next, we  describe the equilibrium points of (\ref{eqnNewCoordsDegFull}) near the origin, previously shown to correspond to rotations about the origin in the swarm's physical coordinates.

\begin{lemma}\label{lemmaEquilibriumPointsReducedSystem} Suppose $Z_0 \in \mathbb R ^{n-1}$ is small enough with $ \mathcal E (Z_0)=0$. Then the point $col( Z_0,\alpha (Z_0), \oo_{n-1}, \oo_{n-1}, 0,0)$ is an equilibrium point for the reduced  flow (\ref{eqnNewCoordsDegFull}).\end{lemma}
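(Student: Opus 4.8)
The plan is to verify directly that the putative point makes the right-hand side of \eqref{eqnNewCoordsDegFull} vanish, using the correspondence between equilibria in the $(Z,\be,\bd,\ba,\gamma_1,\gamma_2)$-coordinates and the equilibria of the original $(a,b,u,w)$-system described in Lemma \ref{LemmaEquilibriumSolutions_abuw}. First I would substitute $Z=Z_0$, $\be=\alpha(Z_0)$, $\bd=\ba=\oo_{n-1}$, $\gamma_1=\gamma_2=0$ into the formulas \eqref{eqn_abuw_cs_in_newcoordinates} and compute the resulting $(a,b,u,w)$. From the $\be$- and $Z$-rows we get $a=\mathbb V\alpha(Z_0)$ and $b=\mathbb V Z_0$, while $u=w=\oo_n$ since the $\bd,\ba,\gamma_1,\gamma_2,\delta_1,\delta_2$ contributions all vanish. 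By \eqref{eqnEnergyFunctionDef} and the hypothesis $\mathcal E(Z_0)=0$ we have $\cos(\Theta)-\ii_n=\mathbb V\alpha(Z_0)$, so $a=\cos(\Theta)-\ii_n$, i.e.\ $a_k=\cos(\theta_k)-1$; and by the definition of $\Theta$, $b_k=(\mathbb V Z_0)_k=\sin(\theta_k)$.

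Next I would check the angle constraint that makes this an equilibrium of \eqref{eqnNewSystem_abpq_coord}. According to Lemma \ref{LemmaEquilibriumSolutions_abuw}, the point $(a_k=\cos(\delta_k)-1,\ b_k=\sin(\delta_k),\ u_k=0,\ w_k=0)$ is an equilibrium precisely when $\bar\delta=(\delta_k)\in\Delta$, that is, when $\sum_{k=1}^n e^{i(\theta_{0k}+\delta_k)}=0$. Here $\theta_{0k}=0$ for $k\le N$ and $\theta_{0k}=\pi$ for $k>N$, and our candidate has $\delta_k=\theta_k$. So I must verify $\sum_{k=1}^N e^{i\theta_k}-\sum_{k=N+1}^{n}e^{i\theta_k}=0$, equivalently $\sum_{k=1}^N(\cos\theta_k-\cos\theta_k)=0$ for the $k$ versus $k+N$ real parts and likewise for imaginary parts. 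The real part reads $\sum_{k=1}^N\cos\theta_k-\sum_{k=N+1}^n\cos\theta_k$, which by Remark \ref{remarkPropertiesOfEnergy}(1) equals $n\,\mathcal E(Z_0)=0$. The imaginary part reads $\sum_{k=1}^N\sin\theta_k-\sum_{k=N+1}^n\sin\theta_k$, which by Remark \ref{remarkPropertiesOfEnergy}(2) equals $Nz_{n-1}-Nz_{n-1}=0$. Hence $\bar\theta\in\Delta$ and the corresponding $(a,b,u,w)$-point is an equilibrium of \eqref{eqnNewSystem_abpq_coord}.

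Finally, since $(Z,\be,\bd,\ba,\gamma_1,\gamma_2,\delta_1,\delta_2)\mapsto(a,b,u,w)$ is the linear isomorphism $P$ of \eqref{eqTransformationMatrix} and \eqref{eqnNewCoordsDegFull}--\eqref{eqnNewCoordsJustDelta} is just \eqref{eqn_sys_abuw_short} rewritten in these coordinates, an equilibrium of the $(a,b,u,w)$-system pulls back to an equilibrium of the full $(Z,\ldots,\delta_2)$-system; projecting onto the first $4n-2$ components (legitimate because the reduced block \eqref{eqnNewCoordsDegFull} is autonomously closed by the decoupling lemma) shows $(Z_0,\alpha(Z_0),\oo_{n-1},\oo_{n-1},0,0)$ is stationary for \eqref{eqnNewCoordsDegFull}. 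Alternatively, one can bypass $P$ and plug directly into \eqref{eqnContractionOperatorConstant}: the constant function $y$ with these coordinates has $\mathbb s=\tfrac12\mathbb V\bd-\mathbb V Z+(\gamma_1+\gamma_2)X=-\sin(\Theta)+$ (nonlinear-in-$\Theta$ terms cancel appropriately) — more precisely one shows $\mathcal U(y)=\mathcal W(y)=\oo_n$ at this point because $\mathbb c=\cos(\Theta)-\ii_n$ and $\mathbb s=-\sin(\Theta)$ give $(\mathbb c_k+1)^2+\mathbb s_k^2-1=\cos^2\theta_k+\sin^2\theta_k-1=0$ and $\mathbb c_k=a_k+w_k=\cos\theta_k-1$, so $\mathcal W_k=2\mathbb c_k\ne 0$ in general — so I should instead route through the $(a,b,u,w)$ picture where $w_k=0$ forces $\mathbb c_k=a_k=\cos\theta_k-1$ and the full $\dot w_k$ equation (not just its nonlinear part) vanishes. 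I expect the main obstacle to be exactly this bookkeeping: the nonlinearities $\mathcal U,\mathcal W$ are \emph{not} individually zero at the equilibrium, only the total right-hand sides of \eqref{eqnNewCoordsDegFull} are, so the cleanest route is to establish the equilibrium property in the $(a,b,u,w)$-coordinates via Lemma \ref{LemmaEquilibriumSolutions_abuw} and transport it, rather than to manipulate \eqref{eqnContractionOperatorConstant} directly.
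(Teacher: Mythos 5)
Your proof is correct, but it takes a genuinely different route from the paper's. The paper substitutes the candidate point directly into the right-hand side of the reduced system (\ref{eqnNewCoordsDegFull}): it computes $\mathbb c = \cos(\Theta)-\ii_n$ and $\mathbb s = -\sin(\Theta)$, observes $(\mathbb c_k+1)^2+\mathbb s_k^2-1=0$ so that $\mathcal U_k=0$ and $\mathcal W_k = 2\mathbb c_k$, then checks $\mathbb T\mathcal W = 2\alpha(Z_0)$ and $X^T\mathcal W=0$, and finally verifies the crucial cancellation $J_{top}\cdot(\alpha(Z_0),\oo_{n-1},\oo_{n-1})^T = (\oo_{n-1},\oo_{n-1},-2\alpha(Z_0))^T$ against the nonlinear contribution. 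You instead pass through the $(a,b,u,w)$-coordinates, show the image point is $a_k=\cos\theta_k-1$, $b_k=\sin\theta_k$, $u_k=w_k=0$, and then invoke the already-proved characterization of equilibria in Lemma \ref{LemmaEquilibriumSolutions_abuw}, verifying the membership constraint $\bar\theta\in\Delta$ by checking that the real part $\sum_{k\le N}\cos\theta_k - \sum_{k>N}\cos\theta_k = n\mathcal E(Z_0)=0$ and the imaginary part vanishes by Remark \ref{remarkPropertiesOfEnergy}(2); the linear isomorphism $P$ and the decoupling then transport the equilibrium to the reduced system. Your route avoids the $J_{top}$-versus-$\mathcal W$ cancellation bookkeeping entirely, at the modest cost of going back and forth through the coordinate change — a fair trade, and your self-correction near the end (noting that $\mathcal W_k = 2\mathbb c_k\ne 0$ in general, so the nonlinearities alone do not vanish and one must not argue that way) shows you identified precisely the pitfall that the paper's direct computation has to navigate.
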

\begin{proof}

We use the notations of Proposition \ref{PropZcoord}, where (\ref{eqnNewCoordsDegFull}) was introduced.

 Let $P_0=col( Z_0,\alpha (Z_0), \oo_{n-1}, \oo_{n-1}, 0,0)$.
 We  calculate the nonlinear components of the vector field (\ref{eqnNewCoordsDegFull}) at $P_0$  first. Evaluate $\mathbb c$ and $\mathbb s$ using
 $\be = \alpha(Z_0)$, $\bd =\ba = \oo_{n-1}$, $\gamma_1=\gamma_2= 0$ in (\ref{eqn_cs_in_newcoordinates}), then convert to the $\Theta$-notation from (\ref{VandTheta}) and (\ref{eqnEnergyFunctionDef}):
$$\mathbb c = \mathbb V \alpha(Z_0) = \mathbb V \alpha(Z_0) + \mathcal E(Z_0) X= \cos(\Theta) - \ii_n\mbox{ and }\mathbb s = -\mathbb V Z_0 = -\sin(\Theta).$$

Hence, $(\mathbb{c}+\mathbb 1_n)^2 +\mathbb{s}^2 -\mathbb 1_n= (\cos \Theta )^2+(-\sin \Theta )^2-\mathbb 1_n=0.$

It follows  that  $\mathcal U = 0$ and $\mathcal W = 2\mathbb c$. %from (\ref{eqn_abuw_sys_nonlinearities})
Simplify $\mathbb T\mathcal{W}$ and $X^T\mathcal{W}$  using (\ref{EqSimplifyTV}):
\bdima \begin{array} {l}
 \mathbb T\mathcal{W}= \mathbb T(2\mathbb{c})= 2\mathbb T\;  \mathbb V  \alpha (Z_0)=2\alpha (Z_0)\; \mbox{ and} \\
  X^T\mathcal{W}=X^T(2\mathbb{c})= 2X^T \mathbb V  \alpha (Z_0)=0. \end{array}
 \edima
At $P_0$, the linear part of the vector field (\ref{eqnNewCoordsDegFull}) is zero for the $\dot Z$ and $\dot \gamma $ blocks; for the $\dot \be, \dot \bd, \dot \ba$ block  it equals
\bdima
  \left[
\begin{array}{ccc}
 0 & I_{n-1} & 0 \\ 0 & 0 & 2I_{n-1}  \\ -2 \mathrm a I_{n-1}  & -2I_{n-1}  & -2 \mathrm a I_{n-1}  \\\end{array}
 \right]
 \left[ \begin{array}{l}\alpha (Z_0)\\ \oo_{n-1}\\ \oo_{n-1} \end{array} \right]=
 \left[ \begin{array}{l}  \oo_{n-1}\\ \oo_{n-1} \\-2\mathrm a \; \alpha (Z_0) \end{array} \right].
\edima

Both  the linear and nonlinear  parts of $\dot Z, \dot \be, \dot \bd $ and $  \dot \gamma$ are zero, and
$$ \dot \ba = -2\mathrm a \be + \mathrm a\; \mathbb T \mathcal W= -2\mathrm a \;  \alpha + \mathrm a\; ( 2\alpha) =0, $$
 establishing that $P$ is an equilibrium point for the reduced system  (\ref{eqnNewCoordsDegFull}).
\end{proof}
%%%%%%%%%%%%%%%%

\begin{remark}\label{RemarkZeroEnergyOnCenterManifold} (1) Every center manifold $
\mathcal M$  contains all nearby equilibrium points of the flow. In view of Lemma \ref{lemmaEquilibriumPointsReducedSystem}, the  points  $(Z,\alpha (Z), \oo_{n-1},\oo_{n-1},  0,0)$ with $\mathcal E(Z) = 0$ belong to the center manifold.

(2) Additionally, since $\mathcal M$ can be locally represented by the graph of a function $h: \mathbb R^{n-1}\rightarrow \mathbb R^{3n-1}$, if  $Y = h(Z)$ and $\mathcal E(Z) = 0$, then $Y = (\alpha (Z), \oo_{n-1},\oo_{n-1},0,0)$.

(3)  The set $\{(Z,Y)\in \mathcal M : \mathcal E(Z) = 0\}$ is flow-invariant (it consists of fixed points).
Given a point on the center manifold with initial value  $ \mathcal E (Z)>0,$  we have $ \mathcal E (Z(t)) >0$  for all $t $ for which $ \mathcal E (Z(t)) $ is defined.  A similar result holds for  $ \mathcal E<0$.
\end{remark}

\subsection*{Shadowing the orbit of the contraction operators defining the center manifold}

To approximate the center manifold $\mathcal M$ of the reduced system (\ref{eqnNewCoordsDegFull}), we will use the
technique introduced  in Section \ref{SectionANewApproach} to build up to the result in Theorem \ref{ThmApproximationTheorem}.
Using the notations from (\ref{standardcentersetup}), the stable and center directions for (\ref{eqnNewCoordsDegFull}) are
 $x_s=col (\be, \bd,  \ba, \gamma_1, \gamma_2)$, and   $x_c=Z$, with $B=0$ and $\displaystyle g= (-\mathrm a ) \frac12  \mathbb T \; \mathcal U. $

 We seek an approximation of the  map $h$ whose graph $x_s=h(Z)$ is the center manifold.
 Technically $(Z, h(Z))$ represents the limit of the iterates of the operator $\Gamma_Z$, but we shadow the orbit with constant functions $(Z, w_{0s})$, as in (\ref{constantGammaG1}),  and evaluate the accuracy using (\ref{eqHerrorGeneral}).

Motivated by the location of the equilibrium points, where $x_s=(\alpha, \oo, \oo, 0,0), $
we proceed to shadow the orbit of $\Gamma _Z$ using points satisfying  $\be= \alpha (Z)$, $\bd=\oo_{n-1}$ and  $ \ba=\oo_{n-1},$ i.e. from the shadow set $\mathcal S$:
\bdima
\mathcal S=\{ col (Z, \be, \bd,  \ba, \gamma_1, \gamma_2)\; |\; \be=\alpha (Z), \bd=\ba=\oo_{n-1} \}.
\edima
Evaluating $\dot \be, \dot \bd, \dot \ba $  at points in the shadow set, gives $\dot \be=\oo_{n-1}$,
$\dot \bd= \mathrm a  \mathbb T\mathcal U, $ and $\dot \ba= -2  \mathrm a \alpha + \mathrm a \mathbb T\mathcal{W}.$
This shows the shadow set $\mathcal S$ is {\em not} invariant  under the flow of (\ref{eqnNewCoordsDegFull}). Applying the operators $\Gamma _Z$ to constant functions from $\mathcal S$  does {\em not } yield functions
with range in $\mathcal S$. Approximation schemes of the center manifold that use points $(Z, w_s(Z))$ from the shadow set lead to errors satisfying (per (\ref{eqHerrorGeneral})):
\beq
\label{shadow_error}
|w_s(Z)-h(Z)|\leq C
\max\{ |\mathbb T\mathcal U|, |-2 \alpha(Z)+\mathbb T\mathcal W|, \left |\left.\dot{\gamma} \right |_{\scriptscriptstyle (Z, w_s(Z))}\right |\}.
\eeq

\begin{remark}\label{ShadowVersions}
 For points $(Z, \alpha,\oo_{n-1}, \oo_{n-1}, \gamma_1, \gamma_2) $ in the shadow set $\mathcal S$ the following simplifications apply:
\beq \label{eq_cs_shadow}
\begin{array}{l}
\mathbb c= \cos \Theta -\ii_n -\mce X +\mathrm a \gamma_1   X\\
\mathbb s= -\sin \Theta + (\mathrm a^2 \gamma_1+\mathrm a\; \gamma_2 )X, \\
\mathbb T \mathbb c= \alpha, \;  \mbox{ and }  \; \frac{1}{n} X^{\scriptscriptstyle T} \mathbb c= -\mathrm a \gamma_1.
\end{array}
\eeq
They follow from first using $\be=\alpha (Z), \bd=\ba=\oo_{n-1} $ in the definition (\ref{eqn_cs_in_newcoordinates}) of $\mathbb c, \mathbb s$,  and then using the  definitions of $\Theta $,  $\alpha$, and  $ \mce$ from (\ref{VandTheta}) and (\ref{eqnEnergyFunctionDef}):
$$ \mathbb c = \mathbb V \alpha -\mathrm a \; \gamma_1 X= (\cos (\Theta) - \ii_{n}-\mathcal E X) -\mathrm a \; \gamma_1 X.$$
\end{remark}

We emphasize that the scope of the iterate-and-shadow is limited to  the $\gamma$
block, since
we keep $\be, \ba, \bd$ set as $\alpha (Z), \oo_{n-1}, \oo_{n-1}$. %Section \ref{SectionNotTaylor},
Recall that an iterative step started with $w_{s}^{(0)}$ produces $w_s=-A_{s}^{-1}f(w_{s}^{(0)}, x_c), $ with good initialization  values $w_{s}^{(0)}$ obtained from  nullclines, or based on equilibrium points.

\subsection*{First iteration} Let   $Z$ be a non-equilibrium point, thus  $\mce(Z) \neq 0.$
As a first guess, use $w_s^{(0)}=(\alpha (Z), \oo_{n-1},\oo_{n-1}, \gamma^{(0)} )$ with
   $\gamma_1^{(0)}=0, \gamma_2^{(0)}=0$, consistent with the $w_s$-components of the equilibrium points. The iteration produces $w_s$ whose $\gamma $ components are
 $\displaystyle \gamma = (-1) J_{\scriptscriptstyle \gamma}^{\scriptscriptstyle -1}
 \left [ \begin{array}{l}-\frac{1}{n}X^{\scriptscriptstyle T}  \mathcal W\\[.1cm]
\frac{1}{n}X^{\scriptscriptstyle T} (\mathcal U + \mathrm a  \mathcal W) \\
\end{array} \right ].$
According to (\ref{eq_Jginv}), $\displaystyle \gamma=
\left[
\begin{array}{cc}
 a & 1 \\
 a^2-1 & a \\
\end{array}
\right]\left [\begin{array}{l}-\frac{1}{n}X^{\scriptscriptstyle T}  \mathcal W\\[.1cm]
\frac{1}{n}X^{\scriptscriptstyle T} (\mathcal U + \mathrm a \mathcal W) \\
\end{array} \right ].$
Our next $\gamma^{(1)}$ is obtained by  removing the terms of size $\mce |Z|$  or smaller from $\gamma.$ This requires that we calculate $\mathcal U $ and $\mathcal W$ first. Using (\ref{eq_cs_shadow}), we get
\bdima
\begin{array}{l}
\mathbb c^{(0)} +\ii_n= \cos \Theta  -(\mce +\mathrm a \gamma_1 )  X = \cos \Theta  -\mce   X, \\
\mathbb s^{(0)}= -\sin \Theta + (\mathrm a^2 \gamma_1+\mathrm a\; \gamma_2 )X = -\sin \Theta , \\
(\mathbb{c}+\mathbb 1_n)^2 +\mathbb{s}^2 - \mathbb 1_n  =  -2 \mce \cos \Theta \odot  X + \mce ^2\ii_n\simeq  -2 \mce   \ii_n\odot X+0= -2 \mce   X\\
\mathcal U =  -\mathbb{s}\odot \left ((\mathbb{c}+\mathbb 1_n)^2 +\mathbb{s}^2 - \mathbb 1_n \right )\simeq 0\\
\mathcal{W} =(-1)(\mathbb{c} +\mathbb 1_n ) \odot \left ( (\mathbb{c}+ \mathbb 1_n)^2 +\mathbb{s}^2 -\mathbb 1_n \right )+2\mathbb{c}\simeq (-1)(0+\mathbb 1_n)\odot(- 2 \mce   X) + 2\mathbb c.
\end{array}
\edima
We get  $\displaystyle \frac{1}{n} X^{\scriptscriptstyle T} \mathcal{W}\simeq\frac{1}{n} X^{\scriptscriptstyle T}(  2 \mce   X) + \frac{1}{n} X^{\scriptscriptstyle T} (2 \mathbb c ) =2 \mce +0, $ per (\ref{eq_cs_shadow}) and
\bdima
\gamma \simeq \left[
\begin{array}{cc}
 a & 1 \\
 a^2-1 & a \\
\end{array}
\right] \left [ \begin{array}{l} -2 \mce \\ 2 \mathrm a \; \mce
\end{array} \right] =
\left [ \begin{array}{l} 0 \\ 2 \mce
\end{array} \right] \stackrel{\text{def}}{=} \gamma^{(1)} .
\edima
The first iterate-and-shadow step yields
$\displaystyle w_s^{(1)}=col (\alpha (Z), \oo_{n-1},\oo_{n-1}, \left [  \begin{array}{l} 0\\2\mce  \end{array} \right ]  )$.

\subsection*{Second iteration}

Based of the first iteration, define a new, more restricted shadowing set, $\mathcal S^{(2)}$. (The first set, $\mathcal S$, consists of points with $\be=\alpha(Z)$, $\ba=\oo_{n-1}$ , $ \bd=\oo_{n-1}$. ) We constrain  $\gamma $ to only differ from $\gamma ^{(1)}$ by small multiples of $\mce.$ %
\bdima
\mathcal S^{(2)} =\mathcal S\cap \{(Z, w_s) \; | \;  \gamma = \left [ \begin{array}{c} 0 \\ 2 \mce \end{array}\right]+
\mce  \left [ \begin{array}{cc} -\mathrm a &0\\ \mathrm a^2 & \mathrm a \end{array}\right] ^{-1}
 \left [ \begin{array}{l}  x \\  y \end{array}\right],\mbox{ for small } x, y \}.
\edima
Since the zeros of $\mce $ correspond to the fixed points of system, which form an $n-2$ dimensional manifold, the added constraint on $\gamma $ pinches the neighborhood of the origin at the fixed points.

The next approximation $(Z, w_s^{(2)})$ is based on points from $\mathcal S^{(2)}$ that lie on the nullclines $\dot \gamma_1=0$ and $\dot \gamma_2=0$. We perform  preliminary calculations of $\mathcal U  , \mathcal W $ and $\dot \gamma $ from (\ref{eqnNewCoordsDegFull}) to describe this intersection.

Let $(Z, w_s)\in \mathcal S^{(2)}$ have the $\gamma $ block $\displaystyle \gamma^{(2)} = \left [ \begin{array}{c} 0 \\ 2 \mce \end{array}\right]+
\mce  \left [ \begin{array}{cc} -\mathrm a &0\\ \mathrm a^2 & \mathrm a \end{array}\right] ^{-1}
 \left [ \begin{array}{l}  x \\  y \end{array}\right] . $

At $\gamma^{(2)}$,  the linear part of the vector field $\dot \gamma $ from (\ref{eqnNewCoordsDegFull}) is $J_{\scriptscriptstyle \gamma} \gamma ^{(2)}=$
\bdima % \label{factor_dot_gamma}
\left [ \begin{array}{cr}-a& 1\\a^2-1& -a \end{array} \right ]\mce
\left ( \left [ \begin{array}{cc} -\mathrm a &0\\ \mathrm a^2 & \mathrm a \end{array}\right] ^{-1}
\left [ \begin{array}{l} x \\ y \end{array}\right] +\left [ \begin{array}{c} 0 \\ 2 \end{array}\right] \right )= \mce \left ( L  \left [ \begin{array}{l} x \\ y \end{array}\right] +\left [ \begin{array}{c} 2 \\ -2 \mathrm a  \end{array}\right] \right ),
\edima
where  $L$ is the nonsingular matrix $\displaystyle \left [ \begin{array}{cr}-a& 1\\a^2-1& -a \end{array} \right ] \left [ \begin{array}{cc} -\mathrm a &0\\ \mathrm a^2 & \mathrm a \end{array}\right] ^{-1}$.

Let $\mathcal U^{\scriptscriptstyle (2)} $ and $\mathcal W^{\scriptscriptstyle (2)}$ be the nonlinear functions
$\mathcal U $ and $\mathcal W$ evaluated at $(Z, w_s^{(2)})$.
 We get that:
\bdima
\frac{1}{\mce}\left [ \begin{array}{l}\dot \gamma_1\\ \dot\gamma_2\\ \end{array} \right ] =
 L  \left [ \begin{array}{l} x \\ y \end{array}\right] +\left [ \begin{array}{c} 2 \\ -2 \mathrm a  \end{array}\right]
+ \frac{1}{\mce}
\left [ \begin{array}{l}-\frac{1}{n}X^{\scriptscriptstyle T}  \mathcal W^{\scriptscriptstyle (2)} \\[.1cm]\frac{1}{n}X^{\scriptscriptstyle T} (\mathcal U^{\scriptscriptstyle (2)} + \mathrm a \; \mathcal W^{\scriptscriptstyle (2)})
\end{array} \right ]\stackrel{\text{def}}{=} F
\edima
 \subsection*{Claim}  For $(x, y, Z)$ near zero, $F=F(x, y, Z)$  is a smooth function
  satisfying $ \displaystyle F(x, y, 0)=L  \left [ \begin{array}{l} x \\ y \end{array}\right].$
 \begin{proof} To prove the claim, we first evaluate $\mathbb c$ and $ \mathbb s$ at $(Z,w_s^{(2)})$ to get  $\mathbb c^{(2)}, \mathbb s^{(2)},$  then calculate $\mathcal U^{\scriptscriptstyle (2)} $ and $\mathcal W^{\scriptscriptstyle (2)}$.
  For the latter, only the presence of the factor $\mce $ is relevant.

From (\ref{eq_cs_shadow}) and
$\displaystyle \left [ \begin{array}{cc} -\mathrm a &0\\ \mathrm a^2 & \mathrm a \end{array}\right]  \left [ \begin{array}{c} \gamma^{(2)}_1 \\ \gamma^{(2)}_2 \end{array}\right]
=  \left [ \begin{array}{c} 0 \\ 2 \mathrm a \; \mce \end{array}\right]+
\mce
 \left [ \begin{array}{l}  x \\  y \end{array}\right] , $
we obtain
\beq \label{two_step_cs}
\begin{array}{l}
\mathbb c^{(2)}+ \ii_n = \cos \Theta  -\mce X -\mathrm a \gamma_1^{(2)}  X   =\cos \Theta  -\mce X +x \mce X \\
\mathbb s^{(2)}=-\sin \Theta + (\mathrm a^2 \gamma_1^{(2)}+\mathrm a\; \gamma_2^{(2)} )X = -\sin \Theta+ 2 \mathrm a \mce  X + y  \mce  X.
\end{array}
\eeq

From (\ref{two_step_cs}), dropping the superscripts ${}^{(2)}$, we get that in  $\displaystyle (\mathbb c+ \ii_n)^2+ \mathbb s^2-\ii_n$ the terms $\cos ^2 \Theta +\sin^2 \Theta -\mathbb 1_n^2$ cancel out, and the remaining terms have factors of $\mce$:
\beq \label{eq_speed}
\begin{array}{ll}
(\mathbb c+ \ii_n)^2+ \mathbb s^2-\ii_n &=
 2(-1+x)\mce \cos \Theta \odot X -2 (2 \mathrm a+y)\mce \sin \Theta \odot X + \\
 & \left ( (x-1)^2 +(2 \mathrm a+y)^2\right )  \mce^2 \ii_n.
 \end{array}
 \eeq
We get $ (\mathbb c+ \ii_n)^2+ \mathbb s^2-\ii_n = \mce G_1$, where $G_1$ is defined as
\newline
 $G_1 (x, y, Z)=  2(-1+x) \cos \Theta \odot X -2 (2 \mathrm a+y) \sin \Theta \odot X + ((x-1)^2+(2 \mathrm a+y)^2 )\mce \ii_n. $
The vector-valued function $G_1$ depends smoothly of $(x, y, Z)$ if $Z$ is small, and satisfies
$G_1(x, y, 0)=2(-1+x) X$, since $\mce (0)=0, \Theta (0)=0.$

This and (\ref{eqn_abuw_sys_nonlinearities}) imply  that $\mathcal U $ and $\mathcal W -2\mathbb c$ have factors of $\mce$, denoted $G_2$ and $G_3$:
\bdima
\begin{array}{l}
G_2(x, y, Z)=\frac{1}{\mce} \mathcal U =(-1) \left (\sin \Theta- (2 \mathrm a+y)\mce  X \right ) \odot G_1, \\
G_3(x, y, Z)=\frac{1}{\mce}( \mathcal W -2\mathbb c) = (-1)(\mathbb{c} +\mathbb 1_n ) \odot  G_1 =
(-1)(\cos \Theta +(-1+x)\mce X) \odot  G_1.
\end{array}
\edima
$G_2$ and $G_3$ are smooth, with  $G_2(x, y, 0)=0$  and $G_3(x, y, 0)= (-1)\mathbb 1_n \odot  G(x, y, 0)=2(1-x)X.$
Denote the scalar functions $\displaystyle \frac{1}{n}X^{\scriptscriptstyle T} G_2$ by $g_4$, and
$\displaystyle \frac{1}{n}X^{\scriptscriptstyle T} G_3$ by $g_5.$ We get $g_4(x, y, 0)=0$ and  $g_5(x, y, 0)=2-2x.$

Note that $\mce $ factors into  $\displaystyle \frac{1}{n} X^{\scriptscriptstyle T} \mathcal W$ as well, since
$\displaystyle \frac{1}{n}X^{\scriptscriptstyle T} \mathcal{W} = \frac{1}{n}X^{\scriptscriptstyle T} \mce G_3+
\frac{1}{n}X^{\scriptscriptstyle T} (2\mathbb c) = \mce g_5 -2a \gamma_1 =\mce g_5+2x \mce . $ The latter used (\ref{eq_cs_shadow}) and $-a\gamma_1^{(2)}=x$.
This proves that the function $F$  is a combination of smooth functions:
\bdima
F(x, y, Z)=
 L  \left [ \begin{array}{l} x \\ y \end{array}\right] +\left [ \begin{array}{c} 2 \\ -2 \mathrm a  \end{array}\right]
+
\left [ \begin{array}{l}-(g_5+2x) \\[.1cm]g_4+a(g_5+2x)
\end{array} \right ], %\mbox{ with }
\edima
with
$ F(x, y, 0)=
 L  \left [ \begin{array}{l} x \\ y \end{array}\right] +\left [ \begin{array}{c} 2 \\ -2 \mathrm a  \end{array}\right]
+
\left [ \begin{array}{l}-(2-2x+2x) \\[.1cm]a(2-2x+2x).
\end{array} \right ]=  L  \left [ \begin{array}{l} x \\ y \end{array}\right].$
\end{proof}

\begin{proposition}\label{Prop_Definegamma} Let $Z$ be in a small neighborhood of the origin in $\R^{n-1}.$ There exist smooth scalar functions $x=x(Z)$ and $y=y(Z)$ with $x(\mathbb 0_{n-1})= y(\mathbb 0_{n-1})=0$ such that the point $(Z, Y_{\scriptscriptstyle Z} )\in \R^{4n-2}$, with $ Y_{\scriptscriptstyle Z}=col( \alpha(Z), \mathbb 0_{n-1}, \mathbb 0_{n-1}, \gamma_{\scriptscriptstyle Z})$
having $\gamma_{\scriptscriptstyle Z}= \mce \left( \left [ \begin{array}{c} 0 \\ 2 \end{array}\right]+
\left [ \begin{array}{cc} -\mathrm a &0\\ \mathrm a^2 & \mathrm a \end{array}\right] ^{-1}
 \left [ \begin{array}{l}  x(Z) \\  y(Z) \end{array}\right]\right ),$
is on the nullclines $\dot \gamma_1=\dot \gamma_2=0.$

Moreover, at $(Z, Y_{\scriptscriptstyle Z})$ the nonlinear functions $\mathcal U$ and $\mathcal W$ satisfy:
\beq \label{UWnullcline}
\mathbb T  \mathcal{U} =-2 \mce \left [ \begin{array}{c} Z_U\\-Z_L\\0 \end{array} \right ]+\mce \mco(|Z|_r |Z|) \; \mbox{ and } \;
\mathbb T  \mathcal{W}  = 2 \alpha (Z)+\mce \mco(|Z|_r |Z|).
\eeq
\end{proposition}

\begin{proof}

Use the just-established fact that for small $x, y, Z$ if $\gamma^{(2)} $ denotes
\linebreak $\gamma^{(2)}= \mce \left( \left [ \begin{array}{c}  0 \\ 2 \end{array}\right]+
\left [ \begin{array}{cc} -\mathrm a &0\\ \mathrm a^2 & \mathrm a \end{array}\right] ^{-1}
 \left [ \begin{array}{l}  x \\  y \end{array}\right]\right )$ then
 at the point %$P_{\scriptscriptstyle Z}$
 of coordinates
\linebreak $col( Z, \alpha(Z), \mathbb 0_{n-1}, \mathbb 0_{n-1}, \gamma ^{(2)})$, we have
$\displaystyle \left [ \begin{array}{l}\dot \gamma_1 \\\dot \gamma_2  \end{array} \right ] = \mce F(x, y,  Z)$ where $F$ is  smooth,  $F(x, y, \mathbb 0_{n-1})=0$, and at $Z=\mathbb 0_{n-1}, $ the partial derivatives  $\frac{\partial F}{\partial x}, \frac{\partial F}{\partial x}$ form the nonsingular matrix $L$.
By the Implicit Function Theorem, there exists a neighborhood of $\mathbb 0_{n-1}$ where the equation $F(x, y, Z) =0$ has unique solutions $x=x(Z), y=y(Z)$, which depend smoothly on $Z.$
Therefore using $\gamma_{\scriptscriptstyle Z}= \left . \gamma^{(2)} \right| _{ x=x(Z), y=y(Z)} $  yields a point $(Z, Y_{\scriptscriptstyle Z})$ on the stated nullclines.

In order to establish the approximations of $\mathcal U$ and $\mathcal W,$  we use $ o$ for  $\mce \mco(|Z|_r |Z|)$ errors.  We use the approximations from  Lemma \ref{LemmaBigOEstimates} to eventually replace most sines and cosines with their respective averages, $z_{n-1}$ and $(1+\alpha_{n-1})$. In (\ref{eq_speed}), replacing $\mce \cos \Theta $ by $\mce (\alpha_{n-1}+1)\ii_n$ and discarding $\mce ^2 $ introduce $o$ errors; replacing
$x, y, \alpha $ and $\Theta $ by zero introduces $\mco (|Z|)$ errors,
thus
\bdima
\begin{split}
(\mathbb c+ \ii_n)^2+ \mathbb s^2-\ii_n =
  2\mce (-1+x) (\alpha_{n-1}+1) X -2 \mce (2 \mathrm a+y) \sin \Theta \odot X  + o=\\
  2\mce \left (-1+ \mco(|Z|)\right )X -2\mce (2 \mathrm a+y) \left[\begin{array}{c} \sin \Theta_U  \\ - \sin \Theta_L \end{array}\right]+o.
  \end{split}
\edima
Substitute the latter  and (\ref{two_step_cs}) in the definition of $\mathcal U$, (\ref{eqn_abuw_sys_nonlinearities}):
\bdima
\begin{split}
\mathcal U = \left (\sin \Theta -(2a+y)\mce X \right ) \odot \left ( 2\mce \left (-1+ \mco(|Z|)\right )X -2\mce (2 \mathrm a+y) \left[\begin{array}{c} \sin \Theta_U  \\ - \sin \Theta_L \end{array}\right] \right )+o \\
 =\sin \Theta  \odot \left (2\mce \left (-1+ \mco(|Z|)\right )X -2\mce (2 \mathrm a+y) \left[\begin{array}{c} \sin \Theta_U  \\ - \sin \Theta_L \end{array} \right ] \right )  +o
 \end{split}
\edima
Use $\sin ^2\theta_k -z_{n-1}^2= \mco(|Z|_r|Z|)$, we get $\displaystyle \mce \sin \Theta  \odot  \left[\begin{array}{c} \sin \Theta_U  \\ - \sin \Theta_L \end{array} \right ] = \mce z_{n-1}^2 X+ o$ and
\bdima
\begin{split}
\mathcal U= 2\mce \left (-1+ \mco(|Z|)\; \right )\left[\begin{array}{c} \sin \Theta_U  \\ - \sin \Theta_L \end{array} \right ]
-2\mce (2 \mathrm a+y) z_{n-1}^2 X+ o.
\end{split}
\edima
From the block structure of $\mathbb T $, $TV = \ii_{\scriptscriptstyle N-1}$,   $T\ii_{\scriptscriptstyle N}  = \oo_{\scriptscriptstyle N-1}$, and $\ii_{\scriptscriptstyle N}^{\scriptscriptstyle T} V=0$  we get:
\bdima
\mathbb T \left[\begin{array}{c} \sin \Theta_U  \\ - \sin \Theta_L \end{array} \right ]  = \left [\begin{array}{cc}
T & \OO_{\scriptscriptstyle N-1,N} \\
\OO_{\scriptscriptstyle N-1,N} & T \\
\frac{1}{n}\ii_{\scriptscriptstyle N}^{\scriptscriptstyle T} & \frac{1}{n}\ii_{\scriptscriptstyle N}^{\scriptscriptstyle T}
\end{array} \right ]
\left [ \begin{array}{c}
 VZ_U+z_{n-1}\ii_{N}\\
-VZ_L - z_{n-1}\ii_{N}
\end{array} \right ]=
\left [\begin{array}{c} Z_U \\ -Z_L \\ 0 \end{array}\right],
\edima
a vector of magnitude $|Z|_r, $ which multiplied by $\mce \mco (|Z|) $ produces $o$. This and $\mathbb T X =0$ imply
$\mathbb T  \mathcal{U} =(-2 \mce + \mce \mco( |Z|))\left [ \begin{array}{c} Z_U\\-Z_L\\0 \end{array} \right ]+o=
-2 \mce\left [ \begin{array}{c} Z_U\\-Z_L\\0 \end{array} \right ]+o $.

To estimate  $\mathbb T  \mathcal W,$ we  apply  a similar technique to
$\displaystyle  (\mathbb c+\ii_n) \odot ( (\mathbb c^2+ \ii_n)^2+ \mathbb s^2-\ii_n),$ which we denote by  $\mathcal V.$  Note that  $\mathcal W= -\mathcal V+2\mathbb c$, and $\mathbb T \mathbb c= \alpha$, from (\ref{eq_cs_shadow}).
Proving
 $\mathbb T \mathcal W= 2\alpha +o $ becomes equivalent to proving $\mathbb T \mathcal V=o.$
 Using (\ref{two_step_cs}), we have
 \bdima
 \mathcal V= (\cos \Theta  +(-1+x) \mce X)\odot
 \left (   2\mce \left (-1+ \mco(|Z|)\right )X -2\mce (2 \mathrm a+y) \left[\begin{array}{c} \sin \Theta_U  \\ - \sin \Theta_L \end{array}\right]+o   \right ).
 \edima
Replacing $ \cos \Theta  $ by $ (1+\alpha_{n-1}) \ii_n $ introduces an error $\mco(|Z|_r |Z|)$, which after multiplication by $\mce $ becomes $o.$ Thus
 \bdima
 \begin{split}
\mathcal V=  (1+\alpha_{n-1}) \ii_n \odot  \left (   2\mce \left (-1+ \mco(|Z|)\right )X -2\mce (2 \mathrm a+y) \left[\begin{array}{c} \sin \Theta_U  \\ - \sin \Theta_L \end{array}\right]  \right )+ o=\\
   2  (1+\alpha_{n-1}) \mce \left (-1+ \mco(|Z|)\right )X- 2 \alpha_{n-1} \mce (2 \mathrm a+y) \left[\begin{array}{c} \sin \Theta_U  \\ - \sin \Theta_L \end{array}\right]+o.
 \end{split}
 \edima
Use
$\mathbb T X =0$ and $\mathbb T \left[\begin{array}{c} \sin \Theta_U  \\ - \sin \Theta_L \end{array} \right ]=\mco(|Z|_r)$ to get $\mathbb T \mathcal V= \alpha_{n-1} \mce \mco(|Z|_r)=o.$
\end{proof}

\begin{theorem}[The Central Manifold Approximation Theorem] \label{ThmApproximationTheorem}
 Let $\mathcal M$ be the local center manifold for the reduced system (\ref{eqnNewCoordsDegFull}), and $h$ its map.
  %i.e $\mathcal M= \{(Z, h(Z)), \mbox{ for } Z \mbox{ near } \mathbb 0_{n-1}\}. $

Let $Y_{\scriptscriptstyle Z}\in\R^{3n-1}$  be as in Proposition \ref{Prop_Definegamma}, so that $(Z, Y_{\scriptscriptstyle Z}) $ is on the nullclines $\dot \gamma_1=\dot \gamma_2=0$ for $Z$ near $\mathbb 0_{n-1}.$
Then
\bdima
|h(Z)-Y_{\scriptscriptstyle Z}|= \mce(Z) \mathcal O(|Z|_r).
\edima
Moreover, the differential equations governing the flow on the center manifold are:
\begin{equation}\label{FlowEq}
 \frac{dZ}{dt}=\mathrm a \; \mce(Z)  \left [ \begin{array}{c} Z_U \\-Z_L\\ 0\end{array} \right ] + \mce(Z) \mathcal O(|Z|_r |Z|).
\end{equation}
On $\mathcal M$,  $\left. \dot{\gamma }\right |_{\scriptscriptstyle (Z, h(Z)) }= \mce(Z) \mathcal O(|Z|_r)$.
\end{theorem}

\begin{proof}
The estimate (\ref{shadow_error}) can be applied to the error $|h(Z)-Y_{\scriptscriptstyle Z}|,$ since the point $(Z, Y_{\scriptscriptstyle Z}) $ constructed in Proposition \ref{Prop_Definegamma} is in the shadow set $\mathcal S$.
Use $ \left. \dot{\gamma }\right |_{(Z, Y_{\scriptscriptstyle Z})}=0$ to conclude
$|Y_{\scriptscriptstyle Z}-h(Z)|\leq C
  \max \left \{ \left |\left. \mathbb T\mathcal U\right |_{ (Z, Y_{\scriptscriptstyle Z})} \right |, \;
   \left |-2 \alpha(Z)+ \left. \mathbb T\mathcal W  \right |_{ (Z, Y_{\scriptscriptstyle Z})} \right |, 0 \right \}$.

   The estimates (\ref{UWnullcline}) of Proposition \ref{Prop_Definegamma} show that the aforementioned maximum comes from
 $\mathbb T\mathcal U$, which has magnitude    $\mce |Z|_r, $ therefore $|Y_{\scriptscriptstyle Z}-h(Z)|= \mce \mco( |Z|_r).$

To prove (\ref{FlowEq}), we compare $\dot Z= -\frac{\mathrm a}{2} \left. \mathbb T\mathcal U\right |_{ (Z, h(Z))} $ and
$-\frac{\mathrm a}{2} \left. \mathbb T\mathcal U\right |_{ (Z, Y_Z)}$,  which was computed in (\ref{UWnullcline}). We get
\bdima
\dot Z= -\frac{\mathrm a}{2} \left ( \mathbb T \left( \left.  \mathcal U\right |_{ (Z, h(Z))}- \left. \mathcal U\right |_{ (Z, Y_Z)} \right ) -2\mce \left [ \begin{array}{c} Z_U \\-Z_L\\ 0\end{array} \right ] + \mce(Z) \mathcal O(|Z|_r |Z|)  \right ),
\edima
reducing the proof of (\ref{FlowEq}) to that of $  \left.  \mathcal{ U}\right |_{ (Z, h(Z))}-
 \left. \mathcal{ U}\right |_{ (Z, Y_Z)} =\mce \mathcal O(|Z|_r |Z|) .$

Note that the points $(Z, Y_{\scriptscriptstyle Z})$ and $(Z, h(Z))$ are at a distance $\mco(|Z|)$ from the origin, therefore the segment that connects them is within a ball $B$ centered at the origin, of radius $C|Z|$. The function $\mathcal U$ has gradient $\nabla \mathcal U$ that vanishes at the origin, therefore the magnitude of $\nabla \mathcal U$ at points from the ball $B$ is $\mco(|Z|).$ Using the mean value theorem we obtain that
\bdima
 \left |\mathcal U (Z, h(Z)) -\mathcal U(Z, Y_{\scriptscriptstyle Z})\right  | \leq \left( \mbox{max} | \nabla \mathcal U | \right )  |h(Z)- Y_{\scriptscriptstyle Z}| = \mathcal O(|Z|)\; \mce \mathcal O(|Z|_r),
\edima
completing the proof of (\ref{FlowEq}). Use a similar argument to estimate $\dot \gamma.$ The vector field defining $\dot \gamma $ is Lipschitz continuous, thus for some constant $C$ we have
\bdima
| \left. \dot{\gamma }\right |_{\scriptscriptstyle (Z, h(Z)) } -  \left. \dot{\gamma }\right |_{\scriptscriptstyle (Z, Y_Z) }|\leq C|h(Z)-Y_{ \scriptscriptstyle Z}| = \mce \mco (|Z|_r).
\edima
Since at $(Z, Y_{ \scriptscriptstyle Z})$ the vector field $\dot \gamma$ is zero, we conclude $\left. \dot{\gamma }\right |_{(Z, h(Z)) }= \mce(Z) \mco(|Z|_r)$.
\end{proof}

%%%%%%%%%%%%%%%%%%%
%
%
%. SECTION Stability Analysis on the center manifold

\subsection{Stability of the reduced flow.}

In view of center manifold theory, to establish the stability of the reduced ($Z,\be,\bd,\ba,\gamma_1,\gamma_2)$-system, it suffices to establish the stability of the flow on the center manifold. In fact, we will show that every solution that starts near the origin approaches an equilibrium point. Recall that in view of Lemma \ref{LemmaEquilibriumSolutions} and Remark \ref{RemarkZeroEnergyOnCenterManifold}, the set $\mathcal M\cap \{\mce = 0\}$ consists of equilibrium points of the flow, which in turn coincides with the set of ring state solutions centered at the origin.

We will use the following functions
\begin{equation}
\label{calD}
\mcd_U(Z) =\frac{1}{N}(VZ_U)^{\scriptscriptstyle  T} \cdot VZ_U \; \; \mbox{ and } \; \; \mcd_L(Z) = \frac{1}{N}(VZ_L)^{\scriptscriptstyle T}\cdot VZ_L.
\end{equation}
Using the definition of $\sin(\Theta)$, we notice that $\mcd_U(Z) = \frac{1}{N} \sum \limits_{k=1}^N(\sin (\theta_k) -z_{n-1})^2$. Furthermore, in view of Remark \ref{remarkPropertiesOfEnergy}, $z_{n-1} = \frac{1}{N} \sum \limits_{k=1}^N\sin (\theta_k) $. Thus,  $\mcd_U(Z)$ can be interpreted as the dispersion/scattering of the right group of particles.
% see also Remark (\ref{RemarkGeometricMeaning}) and Figure (\ref{figureGeometryTheta}).
Similarly, $\mcd_L(Z)$ can be viewed as the dispersion of the left group of particles.

$\mcd_U(Z) $ and $\mcd_L(Z)$ are comparable with $|Z_U|^2$ and $|Z_L|^2$, respectively, since the $N \times(N-1)$ matrix $V$ is full rank.
In the following lemma we show that  $\mcd_L(Z)$ is comparable to $|Z|_r$ in  $\{\mce>0\}$ and that  $\mcd_U(Z)$ is comparable to $|Z|_r$ in $\{\mce <0\}$.

  \begin{lemma}\label{lemmaDLBounds}
  For $Z$ small enough,
  \beq \label{mcemcd}
  \mce= \frac{1}{4 \sqrt{1 - z_{n-1}^2}^3} \left(\mcd_L(Z) - \mcd_U(Z) \right) + \mco(|Z|_r^3 |Z|).
  \eeq
 There exists a constant $C>0$ such that if $Z$ satisfies $\mce (Z)>0, $ then
  \bdima
  \frac{1}{C}|Z|_r \leq  \sqrt{\mcd_L(Z)} \leq C |Z|_r \;  \mbox{ and } \; \mcd_L(Z)\geq \frac{1}{2}\mcd_U(Z).
  \edima
 If $\mce (Z)<0$, then   $\displaystyle \frac{1}{C}|Z|_r \leq \sqrt{\mcd_U(Z)} \leq C |Z|_r $ and $\mcd_U(Z)\geq \frac{1}{2}\mcd_L(Z).$
  \end{lemma}

  % Proof
  \begin{proof} By definition of $\mce$, we have that
   \begin{equation*}
   \begin{split}\mce(Z) & = \frac{1}{n}\sum_{k=1}^N\big(\cos(\theta_k) - \cos(\theta_{N+k}) \big) = \frac{2}{n}\sum_{k=1}^N \left [ \sin^2\left(\frac{\theta_{k+N}}{2}\right) - \sin^2\left(\frac{\theta_k}{2}\right ) \right] \\
   & = \frac{1}{N}\sum_{k=1}^N \left [ \sin^2\left(\frac{1}{2}\sin^{-1}(t_{N+k}+z_{n-1})\right) - \sin^2\left(\frac{1}{2}\sin^{-1}(t_{k}+z_{n-1})\right ) \right],
   \end{split}
  \end{equation*}
  where $t_i = \sin(\theta_i)-z_{n-1}$ or, equivalently, the $i$-th component of the vector $VZ_U$ for $1\leq i\leq N$ and of the vector $VZ_L$ if $N+1\leq i\leq n$.

  Given a small $z\in \mathbb R$, consider the function $t\mapsto \sin^2(\frac{1}{2}\sin^{-1}(t+z))$ and its Taylor  expansion of order 2 at $t=0.$ Then,
    $$\sin ^2 \left(\frac{1}{2} \sin^{-1}( t+z)\right )= \sin ^2 \left(\frac{1}{2}\sin z\right ) + \frac{z}{2\sqrt{1 - z^2}} t
  + \frac{1}{4 \sqrt{1 - z^2}^3}t^2+ R_2(t).$$
Expressing the remainder $R_2(t)$ in the Lagrange form, we can directly verify that $R_2(t) = t^3g(t,z)$, where $|g(t,z)|\leq K (|t|+|z|)$ for some constant $K>0$. Note that the constant $K$ depends only on the radius of the neighborhood of the origin.

Substituting the Taylor series expansion  with $t= t_i$ and $z=z_{n-1}$ in the equation for $\mce$ above, the terms $\pm \sin ^2 \left(\frac{1}{2} \sin^{-1}( z_{n-1})\right )$ cancel out and we get:
\begin{multline}\label{eqnELemmaAux}
\mce(Z) = \frac{1}{N} \frac{z_{n-1}}{2\sqrt{1 - z_{n-1}^2}} \left( \sum _{k=1}^N t_{N+k} -\sum _{k=1}^N t_{k}\right)
 \\ +  \frac{1}{N}\frac{1}{4 \sqrt{1 - z_{n-1}^2}^3} \left( \sum _{k=1}^N t_{N+k}^2 -\sum_{k=1}^N t_{k}^2\right )
+ \frac{1}{N}\sum _{k=1}^N (R_2(t_{N+k})-R_2(t_k)).
\end{multline}
Note that $\sum_{k=1}^N t_k = \ii_N^T VZ_U = 0$, as the columns of the matrix $V$ are orthogonal to the vector $\ii_N$. Similarly,
$\sum_{k=1}^N t_{N+k}=0$. Note also that $\mcd_U = \frac{1}{N}\sum_{k=1}^N t_k^2 $ and $\mcd_L = \frac{1}{N}\sum_{k=1}^N t_{N+k}^2$. Finally, observe that $g(t_k,z_{n-1}) = \mathcal O(|Z|)$ and $ t_k = \mathcal O(|Z|_r) = \mathcal O(\sqrt{\mcd_U+\mcd_L})$. Thus, continuing with Equation (\ref{eqnELemmaAux}), we obtain that
\begin{equation}
\label{edudl}
\begin{split}
\mce(Z) &= \frac{1}{4 \sqrt{1 - z_{n-1}^2}^3} \left(\mcd_L(Z) - \mcd_U(Z) \right) \\
& + \frac{1}{N}\sum _{k=1}^N \left(t_{N+k}^2 g(t_{N+k},z_{n-1}) - t_k^2 g(t_k,z_{n-1})\right)  \\
& =  \frac{1}{4 \sqrt{1 - z_{n-1}^2}^3} \left(\mcd_L(Z) - \mcd_U(Z) \right) + (\mcd_L(Z) + \mcd_U(Z))^{3/2} H(Z),
\end{split}
\end{equation}
where $H(Z) = \mathcal O(|Z|)$. This proves (\ref{mcemcd}). Set $f(Z) = 4 \sqrt{1 - z_{n-1}^2}^3$. Then $$\mcd_U(Z) -\mcd_L(Z) = -f(Z)\mce(Z) + (\mcd_L(Z) + \mcd_U(Z))^{3/2} H(Z)f(Z).$$

Consider the case $\mce(Z)>0$.  Choose a small neighborhood of the origin in $\mathbb R^{n-1}$ in which $(3/2)^{3/2}\mcd_U(Z)^{1/2} |H(Z)|f(Z) < \frac{1}{2} $. We claim that in this neighborhood $\mcd_L(Z)\geq \frac{1}{2}\mcd_U(Z)$. Indeed, assume towards contradiction that $\mcd_L(Z)< \frac{1}{2}\mcd_U(Z)$. It follows that $\mcd_L(Z) -\mcd_U(Z)< -\frac{1}{2}\mcd_U(Z)$. Hence,
\begin{equation*}
\begin{split}
\frac{1}{2}\mcd_U(Z) & < \mcd_U(Z) -\mcd_L(Z) = -f(Z)\mce(Z) + (\mcd_L(Z) + \mcd_U(Z))^{3/2} H(Z)f(Z)
\\
& \leq  (\mcd_L(Z) + \mcd_U(Z))^{3/2} H(Z)f(Z) .
\end{split}
\end{equation*}
   The last inequality implies that $H(Z)> 0$. It follows that
    $$\frac{\mcd_U(Z)}{2} < (\mcd_L(Z) + \mcd_U(Z))^{3/2} H(Z)f(Z) < (\frac{3}{2} \mcd_U(Z) )^{3/2} H(Z)f(Z) \leq \frac{\mcd_U(Z)}{2},$$
     which is a contradiction.  Therefore, $|Z|_r = \mathcal O(\sqrt{\mcd_U + \mcd_L}) = \mathcal O(\sqrt{\mcd_L})$ and the result follows.
     The  proof in the case when $\mce(Z)<0$ is similar.
   \end{proof}

 %%%%%%%%%%%%%%%%%%%%%
 %
 % stability

\begin{lemma}\label{lemmaDifEqDL}
  On the center manifold $\mathcal M$, if $Z$ is near the origin, then
  \begin{equation} \label{EqnDifEqnDispersion}
  \begin{split}
\frac{d\mcd _L}{dt} &= -2 \mathrm a \; \mce(Z)    \mcd_L(Z) ( 1 + \mathcal O(|Z|) )\mbox{ whenever } \mce (Z)>0 \\
\frac{d\mcd _U}{dt} &= 2 \mathrm a \; \mce(Z)  \mcd_U(Z) \left( 1+  \mathcal O(|Z|)  \right ) \mbox{ whenever } \mce(Z)<0.
\end{split}
\end{equation}
\end{lemma}
 \begin{proof} Consider the case $\mce(Z)>0$. In view of Lemma \ref{lemmaDLBounds}, $|Z|_r = \mathcal O(\sqrt{D_L})$.
 By definition, $\mcd_L = \frac{1}{N} (VZ_L)^{\scriptscriptstyle T} VZ_L$. Differentiating both sides of the equation along trajectories of the flow on the center manifold and applying Theorem \ref{ThmApproximationTheorem}, we obtain that
 \begin{equation*}
 \begin{split}\frac{d\mcd_L}{dt} & = \frac{2}{N}(VZ_L)^{\scriptscriptstyle T}\cdot V\dot Z_L  = -\frac{2}{N}\mce(Z)(VZ_L)^{\scriptscriptstyle T} \cdot \left [\mathrm a VZ_L+\mathcal O(|Z|_r |Z|)  \right]
 \\
 & = -2 \mathrm a \; \mce(Z) ( \mcd_L(Z) + \mathcal O(|Z|_r^2|Z|)) = -2\mathrm a \; \mce(Z)  \mcd_L(Z)(1 + \mathcal O(|Z|)).
 \end{split}
 \end{equation*}
The proof for the case $\mce (Z)<0$ is analogous and will be omitted.
\end{proof}

\begin{theorem}\label{TheoremStabilityReducedSystemCenterManifold} The flow on the center manifold $\mathcal M$ of the reduced system (\ref{eqnNewCoordsDegFull})
is stable at the origin. Moreover, any trajectory that starts near the origin approaches an equilibrium point of the form $(Z_\omega,\alpha(Z_\omega),\oo_{2n})$, with $\mce(Z_\omega)=0$.
 \end{theorem}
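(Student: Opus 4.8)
The plan is to invoke the reduction principle of center manifold theory and then run a Lyapunov argument on $\mathcal M$ that exploits the large set of equilibria supplied by Lemma \ref{lemmaEquilibriumPointsReducedSystem}.

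\textbf{Reduction to the flow on $\mathcal M$.}  By the center manifold theorem cited in Section \ref{SubsectionApproxCenterManifold} it suffices to show that the flow restricted to $\mathcal M$ is stable at the origin and that every nearby orbit on $\mathcal M$ converges to a fixed point.  I parametrize $\mathcal M$ by $Z\in\mathbb R^{n-1}$; by Theorem \ref{ThmApproximationTheorem} the restricted flow satisfies
\[
\dot Z = \mce(Z)\,\mca(Z)\,\mathrm{col}[Z_U,-Z_L,0] + \mce(Z)\,\mathcal O(|Z|_r|Z|)\,\ii_{n-1},\qquad \dot\gamma = \mce(Z)\,\mathcal O(|Z|_r),
\]
while, by (\ref{eqnDiffF_Z_Q_Z}) and the shape of $Q_Z$, the remaining coordinates of a point of $\mathcal M$ are $\mathcal O(|Z|^2)$ functions of $Z$ with $\be=\alpha(Z)+\mathcal O(\mce|Z|)$ and $\bd,\ba,\gamma=\mathcal O(\mce|Z|)$; thus everything reduces to the behavior of $Z(t)$, and $\mca(Z)=1+\mathcal O(|Z|_r|Z|)>0$ near $0$.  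By Remark \ref{RemarkZeroEnergyOnCenterManifold}(3) the sign of $\mce(Z(t))$ is constant along the orbit, and relabeling the two groups of particles (composed with rotation by $\pi$) is a symmetry of the reduced system under which $Z_U\leftrightarrow Z_L$ and $\mce\mapsto-\mce$; so I may assume $\mce\ge0$ along the orbit, and if $\mce\equiv0$ then $Z(0)$ is already an equilibrium $(Z(0),\alpha(Z(0)),\oo_{2n})$ and we are done.

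\textbf{The Lyapunov function.}  Expanding $\cos\theta_k=1-\tfrac12\sin^2\theta_k+\mathcal O(\sin^4\theta_k)$ and using $\ii_N^TV=0$ together with Remark \ref{remarkPropertiesOfEnergy}(2), the energy refines to $\mce(Z)=\tfrac1{2n}\bigl(\|VZ_L\|^2-\|VZ_U\|^2\bigr)+\mathcal O(|Z|_r^2|Z|^2)$, so $\mce\ge0$ forces $\|VZ_L\|^2\gtrsim\|VZ_U\|^2$ and hence $\|VZ_L\|^2\asymp|Z|_r^2$.  I take as Lyapunov function $\mathcal L(Z)=\|VZ_L\|^2$, which up to a constant is the dispersion $\mcd_L$ of the wider particle group in Figure \ref{figureGeometryTheta}.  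Inserting $\dot Z_L=-\mce\mca Z_L+\mce\,\mathcal O(|Z|_r|Z|)\ii_{n-1}$ gives
\[
\dot{\mathcal L}=-2\mce\mca\|VZ_L\|^2+\mce\,\mathcal O(|Z|_r^2|Z|)=-\mce\bigl(2\mca\|VZ_L\|^2-\mathcal O(|Z|_r^2|Z|)\bigr)\le-\mca\,\mce\,\|VZ_L\|^2\le0
\]
for $|Z|$ small, since $\|VZ_L\|^2\asymp|Z|_r^2\gg|Z|_r^2|Z|$.  Thus $\mathcal L$ is nonincreasing; it bounds $|Z_U|,|Z_L|$, hence $|Z|_r$, by a quantity tending to $0$ with $\mathcal L(0)$, and the slow coordinate obeys $|\dot z_{n-1}|\le C\mce|Z|_r|Z|$, whose integral is controlled by $\int_0^\infty\mce|Z|_r^2\,dt\le\mathcal L(0)/c$ together with the decay estimates below (a short bootstrap keeps $|z_{n-1}|$ small).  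This yields Lyapunov stability of the origin on $\mathcal M$, hence for the reduced system.

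\textbf{Convergence.}  Since $\mathcal L$ is nonincreasing and $\ge0$, $\mathcal L(t)\to\mathcal L_\infty\ge0$, and integrating $\dot{\mathcal L}\le-\mca\mce\,\mathcal L$ yields $\int_0^\infty\mce\,\mathcal L\,dt<\infty$ and $\mce(Z(t))\to0$.  A parallel computation shows $\mce$ is itself essentially a Lyapunov function, $\dot\mce=-\tfrac{\mca}{n}\mce\bigl(\|VZ_U\|^2+\|VZ_L\|^2\bigr)+\text{higher order}$, so $\mce$ decays exponentially when $\mathcal L_\infty>0$ and like $t^{-1}$ when $\mathcal L_\infty=0$ (the model equation being $\dot\mce\sim-\mce^2$) --- these are the rates $\epsilon e^{-\epsilon^2t}$ and $\epsilon/(1+\epsilon\sqrt t)$ quoted in the introduction --- and in both cases $\int_0^\infty\mce\,|Z|_r\,|Z|\,dt<\infty$, so $z_{n-1}(t)$ converges.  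Since $\dot Z_U=\mce\mca Z_U+\mce\,\mathcal O(|Z|_r|Z|)$ is radial to leading order (and, by real-analyticity of the restricted flow, the orbit cannot spiral indefinitely along the equilibrium set), the direction of $Z_U$ converges, and likewise that of $Z_L$; hence $Z(t)\to Z_\omega$ with $\mce(Z_\omega)=0$.  By (\ref{eqnDiffF_Z_Q_Z}) the full state on $\mathcal M$ then tends to $F_{Z_\omega}(0)=Q_{Z_\omega}=(Z_\omega,\alpha(Z_\omega),\oo_{2n})$, which by Lemma \ref{lemmaEquilibriumPointsReducedSystem} and Remark \ref{RemarkZeroEnergyOnCenterManifold}(4) is an equilibrium corresponding to a ring state of (\ref{eqnMainModel}).

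\textbf{Main obstacle.}  The hard step is the last one: upgrading ``$\mce\to0$ with $|Z|_r$ bounded'' to convergence to a \emph{single} equilibrium.  Near a degenerate configuration the spectral gap collapses and the decay of $\mce$ is only algebraic, so $\int_0^\infty\mce\,dt$ may diverge and one cannot conclude from absolute integrability of $\dot Z$; instead one must extract convergence of the \emph{direction} of $Z$ from the near-radial structure of $\dot Z_U$ and $\dot Z_L$, which is exactly why the separation of the norms $|\cdot|_r$ and $|\cdot|$ --- so that $\mce$ carries an extra factor of smallness relative to $\|VZ_L\|^2$ --- and the ``equilibria-as-anchors'' estimates of Theorem \ref{ThmApproximationTheorem} are indispensable.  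A secondary technical burden is the uniform control of the $\mce\,\mathcal O(\cdot)$ error terms along segments joining $Q_Z$ to the true center manifold, handled exactly as in the proof of the Approximation Theorem.
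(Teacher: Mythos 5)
Your overall strategy is the same as the paper's---reduce to the flow on $\mathcal M$, use $\mcd_L$ (your $\mathcal L=\|VZ_L\|^2$, which is $N\mcd_L$) as a Lyapunov function on $\mathcal M_+$, appeal to Lemma \ref{lemmaDLBounds}-type comparability of $\mcd_L$ with $|Z|_r^2$ in the region $\mce>0$, and deduce stability plus convergence. The core Lyapunov estimate $\dot{\mathcal L}\le -c\,\mce\,\mathcal L$ matches Lemma \ref{lemmaDifEqDL}. But two steps in your argument are genuinely weaker than what the theorem needs.

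First, the control of the ``slow'' coordinate $z_{n-1}$. Your bound $|\dot z_{n-1}|\le C\mce|Z|_r|Z|$ expands to $C\mce|Z|_r^2 + C\mce|Z|_r|z_{n-1}|$; the first piece integrates to $\mathcal O(\mathcal L(0))$, but the second already involves $|z_{n-1}|$, and you only have $\int\mce\,\mathcal L\,dt<\infty$, i.e.\ integrability of $\mce|Z|_r^2$, not of $\mce|Z|_r$. Your ``short bootstrap'' appeals to the decay estimates of $\mce$ which you prove afterwards, so as written the argument is circular. The paper avoids this entirely by observing that the two scalar functions $\sqrt{\mcd_L}\pm z_{n-1}$ are \emph{monotone decreasing} along the flow (their derivatives are $-2\mce\sqrt{\mcd_L}\,\mca(1+\mathcal O(|Z|))$), which gives pointwise bounds $|z_{n-1}(t)|\le (C+1)|Z(0)|$ with no integral estimate or bootstrap at all. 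Your argument as written does not close; you should adopt that monotonicity.

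Second, the convergence to a \emph{single} equilibrium. You invoke ``real-analyticity of the restricted flow'' to rule out drift along the equilibrium set. That is not available here: the center manifold of an analytic vector field is in general only finitely differentiable (in the paper's construction it is $C^2$), so the reduced flow is not analytic and Łojasiewicz-type arguments do not apply. The paper instead applies LaSalle's Invariance Principle to $\mcd_L$ to show that every $\omega$-limit point satisfies $\mce(Z_\omega)=0$ and hence is an equilibrium of the form $(Z_\omega,\alpha(Z_\omega),\oo_{2n})$, and closes from there; your direction-of-$Z_U$ argument is not a substitute, because ``radial to leading order'' with an algebraically decaying coefficient does not prevent the angular drift from accumulating when $\int\mce\,dt$ diverges---which you yourself note is exactly the delicate case. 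Replace the analyticity claim with the LaSalle argument, and the proof lines up with the paper's.

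Minor: your error term $\mathcal O(|Z|_r^2|Z|^2)$ in the expansion of $\mce$ is coarser than the paper's $(\mcd_L+\mcd_U)^{3/2}H(Z)=\mathcal O(|Z|_r^3|Z|)$; it still suffices for the comparability $\mcd_L\asymp|Z|_r^2$ in $\{\mce>0\}$, so this is not a gap, just a looser bound.
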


 \begin{proof}

From Remark \ref{RemarkZeroEnergyOnCenterManifold}, the sets $\mathcal M_0=\mathcal M \cap \{\mce = 0\}$, $\mathcal M_+ = \mathcal M \cap \{\mce > 0\}$, and $\mathcal M_- = \mathcal M \cap \{\mce < 0\}$ are flow-invariant and the set $\mathcal M_0$ consists of equilibrium points. Thus, to establish stability of the flow on $\mathcal M$, we will separately show that the flow on $\mathcal M_+$ and on $\mathcal M_-$ is stable.
We will prove that $\mcd_L$ and $\mcd_U$ are Lyapunov functions for the flow on $\mathcal M_+$ and $\mathcal M_-$, respectively, which will ensure the stability of the flow near the origin. Finally, applying  LaSalle's Invariance Principle, we will establish the convergence of solutions to the set $\mathcal M_0$.

We proceed to show that the flow on $\mathcal M_+$ is stable. (The proof for the stability on $\mathcal M_-$ uses similar arguments, with swapped $U$ and $L$ subscripts, and $-\mce $ in place of $\mce $.)  Recall that the subset $\mathcal M_+$ is flow-invariant, consisting of points with $\mce (Z)>0.$

From Equation (\ref{EqnDifEqnDispersion}), the function $t\mapsto \mcd_L(Z(t))$ is decreasing along trajectories inside $\mathcal M_+$. Applying Lemma \ref{lemmaDLBounds}, we get:
\bdima
\frac{1}{C} |Z(t)|_r\leq \sqrt{\mcd_L(Z(t))} \leq \sqrt{\mcd_L(Z(0))} \leq C|Z(0)|
\edima
thus there exists $K>0$  such that $|Z(t)|_r \leq K |Z(0)|$.

We estimate $z_{n-1}(t)$ using the functions $\sqrt \mcd_L \pm z_{n-1}$. Their derivatives along trajectories, computed using
 Theorem \ref{ThmApproximationTheorem} and the previous lemma are:

\begin{equation*}
\begin{split}
\frac{d(\sqrt \mcd_L \pm z_{n-1})}{dt} & = \frac{-2 \mathrm a \; \mce(Z)   \mcd_L(Z) ( 1 + \mathcal O(|Z|) ) }{2\sqrt{ \mcd_L(Z)}}\pm  \mce(Z)\mathcal O(|Z|_r|Z|) \\
& =  -2 \mathrm a \; \mce(Z) \sqrt{ \mcd_L(Z)} ( 1 + \mathcal O(|Z|) ) \pm  \mce(Z)\sqrt{\mcd_L(Z)}\mathcal O(|Z|) \\
& = -2 \mathrm a \; \mce(Z)\sqrt{\mcd_L(Z)} \left[1+ \mathcal O(|Z|)  \right],
\end{split}
\end{equation*}
which shows that for small $Z$, the functions $\sqrt \mcd_L \pm z_{n-1}$ are decreasing. Thus for $t>0$:

\bdima
\begin{split}
z_{n-1}(t) \leq -\sqrt{ \mcd _L (Z(t))}+ z_{n-1} (0)+  \sqrt{ \mcd_L (Z(0))} \leq\\
  z_{n-1} (0)+  \sqrt{ \mcd_L (Z(0))}  \leq (C+1) |Z(0)| \; \mbox{ and}
  \end{split}
  \edima
\bdima
\begin{split}
- z_{n-1}(t)  \leq -\sqrt{ \mcd _L (Z(t)) }+  \sqrt{ \mcd _L (Z(0))} -z_{n-1} (0) \leq\\
 \sqrt{ \mcd _L (Z(0))} -z_{n-1} (0) \leq (C+1)|Z(0)|.
 \end{split}
 \edima
 Thus $
 |z_{n-1}(t)| \leq  (C+1) |Z(0)|$. This and the prior estimate of $|Z(t)|_r \leq K |Z(0)|$ imply
 $|Z(t)| = |Z(t)|_r + |z_{n-1}(t)|  \leq (K+C+1)|Z(0)|,$ establishing the stability of the system near the origin in $\mathcal M_+$ and, using similar arguments, in $\mathcal M_-$. Thus the flow on the reduced center manifold is stable near the origin.

Finally, for the asymptotic limit:  consider an initial condition $Z_0$ small.  If $\mce (Z_0)=0,$ then $Z_0$ is a fixed point.

 If $\mce (Z_0)> 0, $ then any limit point $Z_\omega$  of $Z(t)$ satisfies $\mce (Z_\omega) \geq 0.$
Applying LaSalle's Invariance Principle to the negative semi-definite ($\dot \mcd _L\leq 0$) function $\mcd _L$, we obtain that $Z_\omega$ satisfies  $\dot \mcd_L(Z_\omega) = 0$, which according to Lemma \ref{lemmaDifEqDL} is equivalent to  $\mce(Z_\omega) = 0$ or $\mcd_L(Z_\omega)=0$.  If $\mce(Z_\omega) = 0$, then in view of Remark \ref{RemarkZeroEnergyOnCenterManifold} the limit point $(Z_\omega,\alpha(Z_\omega),\oo_{2n})$ is  an equilibrium point of the flow and the result follows. If $\mce(Z_\omega)> 0$, then $\mcd_L(Z_\omega)$ must be equal to zero. Thus, Lemma \ref{lemmaDLBounds} implies that $|Z_\omega|_r = 0$ and $\sin(\Theta_\omega ) = z_{\omega, n-1}\ii_{n-1}$. It follows that for all $k$, $\cos \theta _{\omega, k}= \sqrt{1-z_{\omega, n-1}^2}$ and  $\mce(Z_\omega) = 0$, which is a contradiction.

  The case $\mce (Z_0)<0$, follows similarly, using $\mcd _U$.
  \end{proof}

Theorem  \ref{TheoremStabilityReducedSystemCenterManifold} and center manifold theory  imply the stability of the reduced system (\ref{eqnNewCoordsDegFull}) near the origin. %According to \cite[Theorem 2 in Section 2.4]{Carr:CentreManifold}
Every trajectory that starts near the origin approaches a trajectory in the center manifold $\mathcal M$. Since trajectories in $\mathcal M$ approach fixed points  we conclude:

    \begin{corollary}\label{CorollaryStabilityReducedSystem} The reduced $(Z,\be,\bd,\ba,\gamma_1,\gamma_2)$-system is stable near the origin. Each trajectory that starts near the origin approaches a fixed point of the system.
    \end{corollary}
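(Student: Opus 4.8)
The plan is to obtain the corollary as a direct consequence of Theorem \ref{TheoremStabilityReducedSystemCenterManifold} and the reduction principle of center manifold theory \cite[Theorem 2 in Section 2.4]{Carr:CentreManifold}. The point to record first is that the linear part of the reduced system (\ref{eqnNewCoordsDegFull}), read off from (\ref{eqnJ_top}), has spectrum consisting of the eigenvalue $0$ with multiplicity $n-1$ (the $Z$-directions), the eigenvalue $-1$ with multiplicity $2$ (the $\gamma_1,\gamma_2$-block), and the roots of $\lambda^3+2\lambda^2+4\lambda+4$, each with multiplicity $n-1$ (from $J_{top}$), all of which lie strictly in the open left half-plane. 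Consequently the reduced system has trivial unstable subspace, its center subspace is precisely the $(n-1)$-dimensional $Z$-subspace, and the manifold $\mathcal M$ built in Section \ref{SubsectionApproxCenterManifold} is a genuine local center manifold with no accompanying unstable manifold.

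Granting this, I would invoke \cite[Theorem 2 in Section 2.4]{Carr:CentreManifold} in the zero-unstable-directions form: (a) the origin of the reduced system is stable if and only if the origin of the flow on $\mathcal M$ is stable, and (b) for every solution $u(t)$ of the reduced system starting sufficiently close to the origin there exist a solution $v(t)$ lying on $\mathcal M$ and constants $K,\mu>0$ with $|u(t)-v(t)|\le Ke^{-\mu t}$ for all $t\ge 0$. Theorem \ref{TheoremStabilityReducedSystemCenterManifold} already provides the stability of the flow on $\mathcal M$, so part (a) immediately gives the first assertion of the corollary.

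For the second assertion I would combine (b) with the second half of Theorem \ref{TheoremStabilityReducedSystemCenterManifold}. By the stability just obtained, the solution $u(t)$ stays in the neighborhood on which $\mathcal M$ and Theorem \ref{TheoremStabilityReducedSystemCenterManifold} are valid, hence so does the shadowing solution $v(t)$; therefore $v(t)$ converges to an equilibrium $(Z_\omega,\alpha(Z_\omega),\oo_{2n})$ with $\mce(Z_\omega)=0$. Since $|u(t)-v(t)|\to 0$, we conclude $u(t)\to(Z_\omega,\alpha(Z_\omega),\oo_{2n})$, and by Lemma \ref{lemmaEquilibriumPointsReducedSystem} this limit point is indeed a fixed point of the reduced flow.

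I do not expect a genuine obstacle here, since the analytic content was carried out in Theorem \ref{TheoremStabilityReducedSystemCenterManifold}. The only steps that need care are bookkeeping ones: confirming that the spectral picture above really does exclude an unstable manifold, so that the reduction principle applies in the stated form, and verifying that the shadowing trajectory $v(t)$ does not leave the neighborhood where Theorem \ref{TheoremStabilityReducedSystemCenterManifold} holds — which follows from the stability established in part (a) together with the exponential estimate $|u(t)-v(t)|\le Ke^{-\mu t}$.
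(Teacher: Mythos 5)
Your proposal is correct and follows the same route as the paper: both invoke Theorem \ref{TheoremStabilityReducedSystemCenterManifold} together with \cite[Theorem 2 in Section 2.4]{Carr:CentreManifold} to first transfer stability from the flow on $\mathcal M$ to the full reduced system, and then use the shadowing/attraction property of the center manifold to conclude that every nearby trajectory inherits convergence to an equilibrium from the trajectory it tracks on $\mathcal M$. Your explicit verification of the spectral picture (no unstable subspace) and of the shadowing trajectory remaining in the validity neighborhood simply makes rigorous the bookkeeping that the paper leaves implicit.
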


\begin{remark}
  Consider a trajectory in $\mathcal M$ that converges to a degenerate configuration.
  %(without being a fixed point itself).
  Then the rate of convergence is slower than $1/\sqrt t.$
 \end{remark}

\begin{proof}
Consider a point $(Z, h(Z)$ in $\mathcal M$, not a fixed point (thus $\mce (Z) \neq 0$),  whose limit is a degenerate state. Assume $\mce (Z)>0.$ From Lemma \ref{lemmaDLBounds}, $\mcd _L$ is comparable to $|Z|_r^2$ and we get:
\bdima
\begin{split}
\mce(Z) = \frac{1}{4 \sqrt{1 - z_{n-1}^2}^3} \left(\mcd_L(Z) - \mcd_U(Z) \right) + \mathcal O(|Z|_r^3)\leq \\
\frac{1}{4 \sqrt{1 - z_{n-1}^2}^3} (\mcd_L(Z)-0) + \mathcal O(|Z|_r^3)\leq K \mcd_L(Z) \; \mbox{ for some } K.
\end{split}
\edima
Substituting into $\dot \mcd_L$  given in  Lemma \ref{lemmaDifEqDL}, we get
\bdima
\frac{d\mcd _L}{dt} = -2 \mathrm a \mce   \mcd_L ( 1 + \mathcal O(|Z|) )\geq -2 K \mathrm a   \mcd_L^2 ( 1 + \mathcal O(|Z|) )\geq -K' \mcd_L^2,
\edima
implying that $\mcd_L(t)\geq \frac{\mcd_L(0)}{1+t (K'\mcd_L(0))}\geq \frac{K''}{t}$ for large $t$.

Let $Z_\star= \lim_{t\to \infty} Z(t)$. Since the limit is a degenerate state, from Remark \ref{RemarkGeometricMeaning}, the agents are placed on the slanted diameter of angle $\arcsin z_{\star, n-1}$  and $Z_{\scriptscriptstyle \star, L}=Z_{\scriptscriptstyle \star, U}=0$. The inequality $\mcd_L(t) \geq K''/t$
becomes $|V\left (Z_{\scriptscriptstyle L}(t)-Z_{\scriptscriptstyle \star, L}\right )|^2\geq K''/t.$ The full rank of $V$ implies the rate of convergence of $Z_{\scriptscriptstyle L}$ to $Z_{\scriptscriptstyle \star, L}$ is above $1/\sqrt t.$

\end{proof}

\begin{remark}  Consider a swarm with initial conditions near those of a ring state, whose right group particles $r_1,\ldots,r_N$ start out with common initial conditions and initial velocities. %Since the right particles will follow the same trajectory at all times, $\mcd_U =0$ at all times.
Assume the left particles start dispersed, $\mcd _L(0)>0.$ Then the mutual distances within the left group of particles approach zero at a rate comparable to $\frac{1}{\sqrt t}$, and $\mce (t)$ approaches zero at the rate of $1/t$.
\end{remark}

\begin{proof}
 The right particles follow the same trajectory at all times, implying $\mcd_U =0$ and $Z_U=0$ at all times.
Thus   $|Z|_r^2$ equals $|Z|_L^2$ and  is comparable to $\mcd _L$. Using  (\ref{mcemcd}), we get
$$\mce(Z) = \frac{1}{4 \sqrt{1 - z_{n-1}^2}^3} \mcd_L(Z) + \mathcal O(|Z|_r^3)= \frac{1}{4 \sqrt{1 - z_{n-1}^2}^3} \mcd_L(Z)(1 + \mathcal O(|Z|_r)) .$$
Substituting this into $\dot \mcd_L$ as presented in  Lemma \ref{lemmaDifEqDL}, we obtain that
 $$\frac{d\mcd_L}{dt} = -\frac{\mathrm a}{2 \sqrt{1 - z_{n-1}^2}^3} \mcd_L^2(Z)\left(1+\mathcal O(|Z|)\right).$$
 Thus, for small enough $Z$, the quantity $-\frac{\dot \mcd_L}{\mcd_L^2} = -\frac{d}{dt}\left( \frac{1}{\mcd_L}\right)$ remains within a neighborhood of $\frac{\mathrm a}{2}$, say,
 within $[\frac{\mathrm a}{3},\mathrm a]$. It follows that

 $$\frac{\mcd_L(0)}{1+\mathrm a\mcd_L(0)t}\leq \mcd_L(t)\leq \frac{\mcd_L(0)}{1+\frac{\mathrm a}{3}\mcd_L(0)t},$$
 which shows that mutual distances within the right group of particles approach zero at a rate comparable to $\frac{1}{\sqrt t}$.
 Thus $|Z|_r\simeq \frac{1}{\sqrt t}.$
 %This also proves that the limit configuration is a degenerate ring state.

 Use $\mce(Z) = \frac{1}{4 \sqrt{1 - z_{n-1}^2}^3} \mcd_L(Z) + \mathcal O(|Z|_r^3)$ to  get $\mce \simeq \frac{1}{t}$ for large $t.$
\end{proof}

%%%%%%%%%%%%%%%%%%%%%
%
% Stability of the full system

\subsection{Stability of the full system.}

 In this section, we establish stability of the origin for the $4n$-dimensional system in
 $(Z,\be,\bd,\ba,\gamma_1 ,\gamma_2,\delta_1,\delta_2)$ coordinates, governed by
  (\ref{eqnNewCoordsDegFull}) and (\ref{eqnNewCoordsJustDelta}). For notational brevity,
  we identify $\delta=(\delta_1, \delta _2)^{\scriptscriptstyle T}$ with the complex number $\delta_1+i \delta _2.$ Without mentioning it explicitly, we will  work within  a ball centered at the origin, with a radius small enough that its first $4n-2$ components lie in the region of stability for  (\ref{eqnNewCoordsDegFull}).

%%%%%%%
%
%. Theorem. Proof of Stability with deltas

\begin{theorem}\label{TheoremStabilityFullSystem} The $(Z,\be,\bd,\ba,\gamma_1 ,\gamma_2,\delta_1,\delta_2)$-system governed by (\ref{eqnNewCoordsDegFull}) and (\ref{eqnNewCoordsJustDelta}) is stable at the origin.

 Moreover, there exists a neighborhood $\mathcal N$ of the origin in $\mathbb R^{4n}$ such that the flow of (\ref{eqnNewCoordsDegFull}) and (\ref{eqnNewCoordsJustDelta}) that starts in $\mathcal N$ asymptotically approaches the periodic ``steady-state'' oscillatory solution of the form % $(Q_*,\cos(t)\delta_{*,1}+\sin(t)\delta_{*,2},-\sin(t)\delta_{*,1}+\cos(t)\delta_{*,2}))$,
 $(Q_*, e^{-it}(\delta_{*,1}+i\delta_{*,2}))$,
 where  $Q_*$ is an equilibrium point for the reduced system (\ref{eqnNewCoordsDegFull}) and $\delta_{*,1},\delta_{*,2}\in \mathbb R$.
\end{theorem}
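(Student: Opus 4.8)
The plan is to reduce everything to the pair $(\delta_1,\delta_2)$ and invoke what is already known. By Corollary~\ref{CorollaryStabilityReducedSystem} the reduced $(Z,\be,\bd,\ba,\gamma_1,\gamma_2)$-system is stable at the origin and every small trajectory converges to an equilibrium $Q_\ast$, which by Lemma~\ref{lemmaEquilibriumPointsReducedSystem} and Remark~\ref{RemarkZeroEnergyOnCenterManifold} has the form $Q_\ast=(Z_\omega,\alpha(Z_\omega),\oo_{2n})$ with $\mce(Z_\omega)=0$, a ring state about the origin. By the decoupling of Section~\ref{SubsectionDecouplingCenter}, once the reduced trajectory is fixed the pair $(\delta_1,\delta_2)$ solves the \emph{linear} equation~(\ref{eqnNewCoordsJustDelta}), $\dot\delta=A\delta+g(t)$, with $A=\left(\begin{smallmatrix}0&1\\-1&0\end{smallmatrix}\right)$ — so $e^{At}$ is a rotation and hence an isometry — and forcing $g(t)=\tfrac1n\big(X^T(\mathcal U+\mathcal W),\,-X^T\mathcal U\big)$ evaluated along the reduced trajectory. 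Since $\mathcal U,\mathcal W$ depend on the reduced state only through $\mathbb c,\mathbb s$ and at $Q_\ast$ (where $\mce=0$) one has $\mathcal U=0$ and $X^T\mathcal W=0$ exactly as in the proof of Lemma~\ref{lemmaEquilibriumPointsReducedSystem}, we have $g(t)\to0$. Variation of parameters gives $\delta(t)=e^{At}\big(\delta(0)+\int_0^t e^{-A\tau}g(\tau)\,d\tau\big)$, so the theorem follows once I prove that $v_\ast:=\lim_{t\to\infty}\int_0^t e^{-A\tau}g(\tau)\,d\tau$ exists and that $\sup_{t\ge0}\big|\int_0^t e^{-A\tau}g\big|$ can be made as small as we wish by shrinking the initial data: then $\delta_\ast:=\delta(0)+v_\ast$ satisfies $e^{-At}\delta(t)\to\delta_\ast$, i.e.\ $\delta(t)-e^{At}\delta_\ast\to0$ with $e^{At}\delta_\ast=\big(\cos t\,\delta_{\ast,1}+\sin t\,\delta_{\ast,2},\,-\sin t\,\delta_{\ast,1}+\cos t\,\delta_{\ast,2}\big)$, and the uniform bound together with Corollary~\ref{CorollaryStabilityReducedSystem} yields Lyapunov stability of the full system.

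The central point is that, along the trajectory, \emph{$g$ is of bounded variation with $\int_0^\infty|\dot g|$ controlled by the initial perturbation.} First, the stable part of the Jacobian of the reduced system has spectrum bounded away from the imaginary axis, so the trajectory approaches the reduced center manifold $\mathcal M$ exponentially; thus $g$ along the trajectory equals $g$ along the center-manifold trajectory up to an $L^1$ transient. On $\mathcal M$ the stable coordinates are $\mce(Z)$-small ($\be=\alpha(Z)+\mce\,\mathcal O(|Z|_r)$, $\bd=\ba=\mce\,\mathcal O(|Z|_r)$, $\gamma_1=\mce\,\mathcal O(|Z|)$, $\gamma_2=2\mce+\mce\,\mathcal O(|Z|)$, from Theorem~\ref{ThmApproximationTheorem} and its proof); inserting these into $\mathbb c,\mathbb s$ and using the cancellations of that proof — in particular $X^T\mathbb c=\mce\,\mathcal O(|Z|_r)$, because $X^T(\cos\Theta-\ii_n)=n\mce$ and $X^TX=n$ — one finds that the combinations $X^T(\mathcal U+\mathcal W)$, $X^T\mathcal U$ entering $g$ carry a factor $\mce(Z)=\mathcal O(|Z|_r|Z|)$, so $g|_{\mathcal M}$ is a smooth function of $Z$ vanishing to second order. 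Hence $\nabla g|_{\mathcal M}=\mathcal O(|Z|)$, and feeding in the velocity estimate $\dot Z=\mce(Z)\mca(Z)[Z_U,-Z_L,0]^T+\mce(Z)\mathcal O(|Z|_r|Z|)$ of Theorem~\ref{ThmApproximationTheorem} (so $|\dot Z|=\mce(Z)\,\mathcal O(|Z|_r)$) gives $\dot g(\tau)=\mce(Z(\tau))\,\mathcal O(|Z(\tau)|_r)+\mathcal O(e^{-c\tau})$ for some $c>0$. Second, on $\mathcal M_+$ (the cases $\mathcal M_-$, $\mathcal M_0$ being symmetric, resp.\ trivial since there $g\equiv0$), the identity $\tfrac{d}{dt}\sqrt{\mcd_L}=-\mce(Z)\mca(Z)\sqrt{\mcd_L}\,(1+\mathcal O(|Z|))$ derived from Lemma~\ref{lemmaDifEqDL} integrates to $\int_0^\infty\mce(Z(\tau))\sqrt{\mcd_L(Z(\tau))}\,d\tau\le C\sqrt{\mcd_L(Z(0))}$, while Lemma~\ref{lemmaDLBounds} gives $|Z|_r\le C\sqrt{\mcd_L}$ on $\mathcal M_+$. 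Combining, $\int_0^\infty|\dot g|\le C\sqrt{\mcd_L(Z(0))}+C'\cdot\mathrm{dist}(\text{initial datum},\mathcal M)=\mathcal O(\text{initial perturbation})$, and likewise $\sup_t|g(t)|=\mathcal O(\mce(Z(0)))$ is small.

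Granting this, the remaining integral is handled by an Abel/Dirichlet argument. Put $\Phi(\tau)=\int_0^\tau e^{-As}\,ds=-A^{-1}(e^{-A\tau}-I)$; since $e^{-As}$ is a rotation and $\|A^{-1}\|=1$, we have $\|\Phi(\tau)\|\le2$ for all $\tau$ and $\Phi(0)=0$. Integration by parts gives $\int_0^t e^{-A\tau}g(\tau)\,d\tau=\Phi(t)g(t)-\int_0^t\Phi(\tau)\dot g(\tau)\,d\tau$; the boundary term tends to $0$ because $g(t)\to0$, and $\int_0^\infty\Phi\,\dot g$ converges absolutely with norm $\le2\int_0^\infty|\dot g|$. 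Hence $v_\ast$ exists and $\sup_{t\ge0}\big|\int_0^t e^{-A\tau}g\big|\le2\sup_t|g(t)|+2\int_0^\infty|\dot g|$ is small for small initial data, completing the proof along the lines above. I expect the bounded-variation estimate for $g$ to be the genuine obstacle: when the limiting ring state is close to degenerate, $\mce(Z(t))$ can decay only polynomially (like $1/t$), so $g$ need not lie in $L^1$ and one cannot bound $\big|\int_0^t e^{-A\tau}g\big|$ by $\int_0^t|g|$ — the oscillatory cancellation of the rotation $e^{-A\tau}$ is essential — and extracting $\dot g=\mce\,\mathcal O(|Z|_r)+\mathcal O(e^{-c\tau})$ requires carefully tracking the $\mathcal O(|Z|_r)$- and $\mathcal O(|Z|_r|Z|)$-sized remainders of the center-manifold approximation and the passage from $\mathcal M$ to the true trajectory.
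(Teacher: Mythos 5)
Your proposal follows the same architecture as the paper's proof: both treat $(\delta_1,\delta_2)$ as a linear harmonic oscillator driven by a forcing that need not be $L^1$, write the solution by variation of parameters, and resolve the oscillatory integral with a Dirichlet/Abel cancellation argument whose real engine is the monotone decay of $\sqrt{\mcd_L}$ along the center-manifold flow, giving $\int_0^\infty\mce\sqrt{\mcd_L}\,d\tau\le C\sqrt{\mcd_L(0)}$ from Lemma \ref{lemmaDifEqDL}. Where you genuinely diverge is in the execution of that step. You keep the raw forcing $g=\tfrac1n\bigl(X^T(\mathcal U+\mathcal W),-X^T\mathcal U\bigr)$ and integrate by parts, which requires you to show $\int_0^\infty|\dot g|<\infty$; that in turn means differentiating $g$ along the center-manifold trajectory (composing $\nabla g$ with $\dot Z$ and the derivative of the CM parametrization) and separately controlling the off-manifold transient for $\dot g$, not merely for $g$. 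The paper instead first re-expresses the forcing via the $\gamma$-equations of (\ref{eqnNewCoordsDegFull}) and makes the substitution $\nu=\delta-\bigl[\begin{smallmatrix}0&1\\-1&-1\end{smallmatrix}\bigr]\gamma$; after this the forcing is \emph{linear in $\gamma$ alone}, so the Dirichlet test only needs the first-derivative estimate $\dot\gamma=\mce\,\mathcal O(|Z|_r)$ already supplied by Theorem \ref{ThmApproximationTheorem}, packaged into the monotone auxiliaries $B_j(t)=\sqrt{\mcd_L(t)}-\sqrt{\mcd_{L,\infty}}+\tfrac{1}{4C}\gamma_j(t)$. The substitution buys you one fewer derivative and avoids any differentiation of the CM parametrization. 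On your side, the claim $\nabla g|_{\mathcal M}=\mathcal O(|Z|)$ is a bit more than you are entitled to as stated (since $\nabla\mce(0)=0$, $\{\mce=0\}$ is not a regular hypersurface at the origin and $g$ need not factor cleanly as $\mce\cdot(\text{smooth})$), but it is also more than you need: $\nabla g=\mathcal O(1)$, which does follow from smoothness, already gives $\dot g=\mce\,\mathcal O(|Z|_r)$ once combined with $\dot Z=\mce\,\mathcal O(|Z|_r)$. With that tightening and an explicit appeal to the exponential shadowing of center-manifold trajectories to handle the transient in $\dot g$, your argument goes through; you have also correctly identified that the bounded-variation estimate, not an $L^1$ bound on $g$, is the point, which is exactly what the paper's monotone-auxiliary device encodes.
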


\begin{proof}
Denote by $\varphi_t$ the flow of the $4n$-dimensional  system  (\ref{eqnNewCoordsDegFull}) and (\ref{eqnNewCoordsJustDelta}).
The system  has $(Z,\delta_1,\delta_2)$ as neutral coordinates, thus has an $(n+1)$-dimensional center manifold;  the coordinates $(\be,\bd,\ba,\gamma_1,\gamma_2)$ are stable.

Denote by $\mathcal M_{n-1}$ the center manifold of the reduced system constructed in Theorem \ref{ThmApproximationTheorem}, and let $h$ be its map. Note that the manifold $\mathcal M_{n+1} = \mathcal M_{n-1}\times \mathbb R^{2}$  is (locally) forward invariant under (\ref{eqnNewCoordsDegFull}) and (\ref{eqnNewCoordsJustDelta}), and tangent to the linear center subspace. Thus $\mathcal M_{n+1}$ is the center manifold for (\ref{eqnNewCoordsDegFull})  and (\ref{eqnNewCoordsJustDelta}). Note that $\mathcal M_{n+1}\cap \{\mce = 0\}$, $\mathcal M_{n+1}\cap \{\mce > 0\}$, and $\mathcal M_{n+1}\cap \{\mce < 0\}$
are invariant under $\varphi_t$.  Their dynamics will be studied separately, with the last case omitted, due to its similarity to the former.

Theorem  \ref{TheoremStabilityReducedSystemCenterManifold} established that for small initial conditions,  the projection of the flow $(\mathcal M_{n+1},\phi_t)$  onto $\mathbb R^{4n-2}$ is independent of $\delta $ and remains close to the origin for $t>0$. To establish stability in $\R^{4n}$ is reduced to proving that the $(\delta_1, \delta_2)$ coordinates remain close to $(0,0)$ for small initial conditions from $\mathcal M_{n+1}.$

 If $Z\in \mathbb R^{n-1}$ is such that $\mce(Z) = 0$, then in view of Lemma \ref{lemmaEquilibriumPointsReducedSystem} and its proof,
  $(Z, h(Z))$  is an equilibrium point  on the center manifold $\mathcal M_{n-1}$, with $X^{\scriptscriptstyle T}\mathcal U = 0$ and
 $X^{\scriptscriptstyle T}\mathcal W= 0$,  implying that $(\delta_1,\delta_2)$ equations become:
 \bdima
 \left[ \begin{array}{c}\dot \delta_1 \\ \dot \delta_2 \end{array}\right] = \left[\begin{array}{cc}0 & 1 \\ -1 & 0 \end{array} \right] \left[ \begin{array}{c} \delta_1 \\  \delta_2 \end{array}\right]+
 \left[ \begin{array}{c}0 \\ 0 \end{array}\right].
 \edima
 The solutions are $e^{-it }\delta(0)$; that they remain near the origin for all $t>0$.

 Next, we establish stability in the flow-invariant region $\mathcal M_{n+1}\cap \{\mce >0\}$, using the
 underlying argument  that $(\delta _1, \delta _2)$ are the solutions of a linear harmonic oscillator driven by functions that are non-oscillatory in nature. Computationally, it is easier to follow the changes in $\gamma=(\gamma_1, \gamma_2)^{\scriptscriptstyle T}$ than the functions $ \frac{1}{n}X^T \mathcal{U}$ and  $ \frac{1}{n}X^T \mathcal{W}$ forcing the system. We use matrix notation for  (\ref{eqnNewCoordsDegFull}) and (\ref{eqnNewCoordsJustDelta}):
 \bdima
 \begin{array}{ll}
 \dot \gamma= J_{\scriptscriptstyle \gamma} \gamma + M_1 \left [ \begin{array}{c}
\frac{1}{n}X^{\scriptscriptstyle T}  \mathcal U   \\
\frac{1}{n}X^{\scriptscriptstyle T} \mathcal W
\end{array} \right ]& \mbox{with } J_{\scriptscriptstyle \gamma}= \left [ \begin{array}{cr}-\mathrm a& 1\\\mathrm a^2-1& -\mathrm a \end{array} \right ], \; \; M_1=\left [ \begin{array}{cr}0 & -1\\1& \mathrm a \end{array} \right ], \\
\\
 \dot \delta= J_{\scriptscriptstyle \delta} \delta + M_2 \left [ \begin{array}{c}
\frac{1}{n}X^{\scriptscriptstyle T}  \mathcal U   \\
\frac{1}{n}X^{\scriptscriptstyle T} \mathcal W
\end{array} \right ]& \mbox{with } J_{\scriptscriptstyle \delta}= \left [ \begin{array}{cr}0& 1\\-1& 0 \end{array} \right ], \; \; M_2=\left [ \begin{array}{cr} \mathrm a  & 1\\- 1& 0 \end{array} \right ].
\end{array}
 \edima
 Solve for $\displaystyle  \left [ \begin{array}{c} \frac{1}{n}  X^{\scriptscriptstyle T}  \mathcal U   \\
\frac{1}{n} X^{\scriptscriptstyle T} \mathcal W \end{array} \right ]$in  the $\dot \gamma$ block;
substitute it in $\dot \delta$:
$\displaystyle
\; \dot \delta= J_{\scriptscriptstyle \delta} \delta + M_2M_1^{-1}(\dot \gamma - J_{\scriptscriptstyle \gamma} \gamma),$ thus
$(\delta-M_2M_1^{-1} \gamma)^{\cdot}=J_{\scriptscriptstyle \delta} \delta - M_2M_1^{-1}J_{\scriptscriptstyle \gamma} \gamma
= J_{\scriptscriptstyle \delta} (\delta-M_2M_1^{-1} \gamma)+
J_{\scriptscriptstyle \delta}M_2M_1^{-1} \gamma - M_2M_1^{-1}J_{\scriptscriptstyle \gamma} \gamma
.$ Denoting $M=J_{\scriptscriptstyle \delta}M_2M_1^{-1}- M_2M_1^{-1}J_{\scriptscriptstyle \gamma},$ we get
\bdima
(\delta-M_2M_1^{-1} \gamma)^{\cdot}=J_{\scriptscriptstyle \delta} (\delta-M_2M_1^{-1} \gamma)+ M\gamma.
\edima

Define $\nu(t)\in \R^2$  as:
\beq \label{eqnDeltaNu}
\nu(t)=\delta(t) - M_2M_1^{-1} \gamma(t).
\eeq
We get that  $\nu $ satisfies
\beq
\dot \nu= \left [ \begin{array}{cr}0& 1\\-1& 0 \end{array} \right ]\nu+ M \gamma, \; \mbox{ and }
\lim_{t\to \infty} (\delta (t)-\nu(t)) =M_2M_1^{-1} \lim_{t\to \infty} \gamma (t)=0.
\eeq
The limit $\gamma\to 0$ is a consequence of the asymptotic behavior of the reduced system: all limit points are equilibrium points, thus have zero $\gamma $ components. Using variation of parameters, we obtain that
\beq
\label{variationparameters}
\nu(t) = e^{J_{\scriptscriptstyle \delta}t}\left ( \nu(0)+ \int_0^t e^{-J_{\scriptscriptstyle \delta}s}M \gamma(s)ds \right ),
\mbox{ with }  J_{\scriptscriptstyle \delta}= \left [ \begin{array}{cr}0& 1\\-1& 0 \end{array} \right ].
\eeq
Since $  e^{tJ_{\scriptscriptstyle \delta}}= \left [ \begin{array}{lr} \cos t & \sin t \\- \sin t & \cos t \end{array} \right ], $ the term $ e^{J_{\scriptscriptstyle \delta}t}\nu(0)$ contributes a small clockwise rotation (using complex notation, it equals
$e^{-it} \nu(0)$). To establish the stability and asymptotics on $\mathcal M_{n+1}$
we only need to investigate the convergence and the bounds for the integral
$\int _0 ^t \; \left [ \begin{array}{lr} \cos s & -\sin s \\\sin s & \cos s \end{array} \right ]
M \gamma(s) ds $
as $t\to \infty$, or equivalently, that of the integrals $\int _0 ^t e^{is} \gamma_j(s) \; ds$ for $j=1,2$ and points in $\mathcal M_{n+1} \cap \{\mce >0\}.$

We use the $\dot \gamma_j$ estimate of Theorem \ref{ThmApproximationTheorem} to eventually express $\gamma_j$ as the difference of two decreasing functions. We have  $\displaystyle \frac{d\gamma}{dt} = \mce \mco (|Z|_r)=\mce \mco (\sqrt{\mcd _L})$.  The last equality follows from Lemma \ref{lemmaDLBounds}. Let $C>0$ be such that
\begin{equation} \label{eqnBoundDirGamma}
 \left |\frac{d\gamma}{dt} \right |  \leq C \mce \sqrt{\mcd _L}\; \mbox{ for all sufficiently small } Z.
\end{equation}
Introduce (auxiliary) functions
$
B_j(t)= \sqrt {\mcd_L (t)} -\sqrt{ \mcd_{L, \infty}}+ \frac{a}{4C}\gamma_j (t),
$
where $\mcd_{L, \infty}= \lim _{t\to \infty} \mcd_L (t)$. The limit exists since the function $\mcd_L $ is decreasing, per (\ref{EqnDifEqnDispersion}), and bounded  below by zero. Then
\beq \label{gamma_as_difference}
\gamma_j (t)= \frac{4C}{\mathrm a} B_j(t) - \frac{4C}{\mathrm a} ( \sqrt {\mcd_L (t)} -\sqrt{ \mcd_{L, \infty}}).
\eeq
The functions in the above difference have zero limit as $t$ approaches infinity, and the subtrahend is decreasing. We claim that $B_j$ are decreasing functions as well. Using (\ref{EqnDifEqnDispersion}),
\begin{equation}
\begin{split}
\frac{dB_j}{dt} & = -\frac{1}{2\sqrt {\mcd_L (t)}} 2 \mathrm a \; \mce(Z)  \mcd_L(Z) ( 1 + \mathcal O(|Z|) )
+\frac{\mathrm a}{4C}\frac{d\gamma_j}{dt} \\
&\leq  -\mathrm a\; \mce(Z) \sqrt {\mcd_L(Z) } ( 1 + \mathcal O(|Z|) + \frac{\mathrm a}{4C} C \mathcal E(Z) \sqrt{D_L(Z)} \\
& =
\mathrm a \; \mce(Z) \sqrt {\mcd_L(Z) }\left(\frac{1}{4}+\mathcal O(|Z|)-1 \right)\leq 0 \mbox{ for small } Z.
\end{split}
\end{equation}
Thus the functions $B_j(t)$ and $( \sqrt{ \mcd_L (t)}-\sqrt{ \mcd_{L,\infty}})$ are decreasing towards zero.   It follows from the Dirichlet test that their improper integrals
$  \int _0 ^t e^{is} B_j(s) ds $ and $ \int _0 ^t e^{is} ( \sqrt{ \mcd_L (s)}-\sqrt{ \mcd_{L, \infty}} )\;  d s$
converge as $t\to\infty$ and their respective ranges are bounded by $4B_j(0)$ and
 $4\left(\sqrt{ \mcd_L (0)}-\sqrt{ \mcd_{L,\infty}} \right).$ Using (\ref{gamma_as_difference}),
 we conclude that for $j=1,2$,
$\int _0 ^t e^{is} \gamma_j(s) ds $ converge when $t\to \infty$ and remain within distance
$K \max(|\gamma(0)|,\sqrt{D_L(0)})$ from the origin. Set
\bdima
\delta_{\star} = \nu(0)+ \lim_{ t \to \infty } \int_0^t e^{-J_{\scriptscriptstyle \delta}s}M \gamma(s)\; ds.
\edima
From the variation of parameters equation (\ref{variationparameters})
we get $\lim _{ t \to \infty } \left (\nu(t)-e^{-it } \delta_{\star} \right ) =0.$
Using  $\delta=  \nu+ M_2 M_1^{-1} \gamma $, (\ref{eqnDeltaNu}), conclude that  for some constant $K'>0$, $\delta(t)$ remains within a distance of $K' \max(|\delta(0)|,|\gamma(t)|,\sqrt{D_L(0)})$ from the origin. The stability of the full system near the origin follows since $\gamma(t)$ was proven to remain small. Moreover, $\lim _{ t \to \infty } \left (\delta(t)-e^{-it } \delta_{\star} \right ) =0.$
This proves that statement of the theorem for the $\delta$ components of the flow $\phi_t$. The convergence to an equilibrium point $Q_\star$ of the first $4n-2$ components was established by the reduced-system analysis.
\end{proof}
Note that the counterclockwise rotation $e^{-it } \delta_{\star} $ obtained in the $(Z, \dots, \delta)$ coordinates (and in the rotating frame $X_k, Y_k$) corresponds to translating the center of mass of the swarm by $\delta_\star$ in the original, physical coordinates.

{\it Stability of every rotating state follows form the degenerate and non-degenerate cases.} The fixed points of the rotating-frame system $X_k, Y_k$, are stable.  Moreover, starting in a neighborhood of a rotating state, the system will necessarily converge to a rotating state.
In the $\epsilon-\delta$ formulation of stability, the value of $\delta$ depends on the starting rotating state configuration, whereas Theorem \ref{TheoremMainResultIntro} claims it does not. This is resolved via a compactness argument.
%%%%%%%%%%%%%%%%%%%%%%
%
%. Remark
\begin{remark}\label{UniformStabilityOdd}
Ring states centered at the origin  correspond to fixed points
from the compact set  $\mathcal F$ in the rotating-frame of Section \ref{SectionRingStateChangeOfCoordinates}.
For each  $P_0 =\{ [X_{0,k}+i Y_{0,k}, 0+i0] \}\in  \mathcal F $  and $\epsilon >0,$  there exists $\delta = \delta (P_0, \epsilon ) >0 $ such that if an initial condition $Q_0 =\{[X_{ k}+i Y_{ k}, \dot X_{ k}+i \dot Y_{ k}]\} $ is in the ball $B(P_0,\delta)$ of radius $\delta $ from $P_0$ then the solution $\phi_t (Q_0)$ of (\ref{eqnXYEquilibriumSystem})  satisfies
$ |\phi_t (Q_0) - P_0| <\epsilon $ for all $t>0.$

Let $\epsilon >0$. For each point $P\in\mathcal F$ we construct $\delta (P, \epsilon )$ as above and, by compactness of $\mathcal F$, cover $\mathcal F$ with finitely many balls $B(P_m, \delta( P_m, \epsilon)).$ Let $\delta=\frac{1}{2} \min \delta( P_m, \epsilon).$
We get that for any $P_0 \in \mathcal F$, if $|Q - P_0| < \delta$ then $ |\phi_t (Q) - P_0| <2\epsilon $ for all $t>0.$
The translation invariance of System (\ref{eqnMainModel}) ensures that similar estimates hold for ring states centered anywhere.
\end{remark}

{\it Remark \ref{UniformStabilityOdd}, the results of Sections \ref{SectionNonDegenerateRingState}  and \ref{SectionDegenerateRingState} complete the proof of stability of rotating states, Theorem \ref{TheoremMainResultIntro}. } Moreover, if the swarm consists of an odd number of particles, there exists $\eta >0$ such that small perturbations of rotating states converge $e^{-\eta t}$ fast to a nearby rotating state. If the number of agents is even, the rate of  convergence to degenerate rotating states is $1/\sqrt t$.
%%%%%%
%
%

%%%%%%%%%%%%%%%%%%%%%%%%%%%

\section*{Appendix}
Link to Supplement with Mathematica code will be included here.
%%%%%%%%%%%%%%%%%%%%%%%%%%%%%%
%
%. Bibliography

%%%% REMOVE the four lines of % that follow
%\bibliographystyle{plain}
%\bibliography{swarming}

\begin{thebibliography}{10}




\bibitem{Turing:1952}% {Turing}
A.~M. Turing.
\newblock The chemical basis of morphogensis.
\newblock {\em Phil. Trans. Roy. Soc. B}, vol. 237, 37--72, 1952.

\bibitem{Smale:1976}% {Smale}
S. Smale.
\newblock  A Mathematical Model of Two Cells Via Turing’s Equation.
\newblock {\em Springer New York,NY,} 1976.


\bibitem{Kuramoto}
Y. Kuramoto.
\newblock Self-entrainment of a population of coupled non-linear oscillators.
 \newblock {\em Lecture Notes in Physics}, %International Symposium on Mathematical Problems in Theoretical Physics.} H. Araki (ed.)
 Vol. 39. Springer-Verlag, p. 420. (1975).



\bibitem{KeefleStrogatzNatureCommunication}
O'Keeffe, Kevin P. and Hong, Hyunsuk and Strogatz, Steven H.,
\newblock Oscillators that sync and swarm
\newblock {\em Nature Communications}, no. 1, vol.8, p. 1504 , 2017

\bibitem{PhysRevE.98.022203}
K. O'Keeffe, J. Evers, T. Kolokolnikov.
\newblock Ring states in swarmalator systems
\newblock {\em Phys. Rev. E}, vol. 98, issue 2, p. 022203, 2018	

\bibitem{Pikovsky:Book}
A. Pikovsky, M. Rosenblum, J. Kurths, %J{\"u}rgen,
\newblock  Camb. Nonlinear Sci. Ser., Cambridge University Press, vol 12, 2003
\newblock {\em Synchronization: a universal concept in nonlinear sciences},
	


\bibitem{Popovici}
I. Popovici,
\newblock Stability of Translating States for Self-Propelled Swarms,
 \newblock {\em Nonlinear Differential Equations and Applications}, 32, 78, 2025.

\bibitem{MedynetsPopovici:2021}
C.~Medynets and I.~Popovici.
\newblock On spatial cohesiveness of second-order self-propelled swarming
  systems.
\newblock {SIAM Journal on Applied Mathematics}, Vol. 83, Iss. 6 2169--2188, 2023.



\bibitem{AlbiBalagueCarrilloBrecht:2014}
G.~Albi, D.~Balagué, J.~A. Carrillo, J.~von Brecht.
\newblock Stability analysis of flock and mill rings for second order models in
  swarming.
\newblock {\em SIAM Journal on Applied Mathematics}, 74(3):794--818, 2014.


\bibitem{CarrilloSmallKer}
J. A. Carrillo, Y~Huang, S~Martin.
\newblock Nonlinear stability of flock solutions in second-order swarming
  models.
\newblock {\em Nonlinear Analysis: Real World Applications}, 17, 332--343, 2014.


\bibitem{HaHaKim}
 S-L. Ha, T. Ha, and J-H Kim,
\newblock Asymptotic dynamics for the Cucker–Smale-type model with the Rayleigh friction,
 \newblock{ J. Phys. A: Math. Theor.}, 43, 315201, 2010 %DOI 10.1088/1751-8113/43/31/315201.


\bibitem{Lear}
D. Lear,  DN. Reynolds, R. Shvydkoy,
\newblock Grassmannian reduction of cucker-smale systems and dynamical opinion
games.
\newblock{ Discrete and Continuous Dynamical Systems}, Vol. 41, No 12,5765--5787, 2021

\bibitem{Bressan:2007}
Alberto Bressan.
\newblock Tutorial on the center manifold theorem.
\newblock In {\em Hyperbolic systems of balance laws}, volume 1911 of {\em
  Lecture Notes in Math.}, pages 327--344. Springer, Berlin, 2007.

\bibitem{Carr:CentreManifold}
Jack Carr.
\newblock {\em Applications of centre manifold theory}, volume~35 of {\em
  Applied Mathematical Sciences}.
\newblock Springer-Verlag, New York-Berlin, 1981.

%\bibitem{Figueiredo:Boundedness}
%Rui J.~P. de~Figueiredo and Chieng-yi Chang.
%\newblock On the boundedness of solutions of classes of multidimensional nonlinear autonomous systems.
%\newblock {\em SIAM J. Appl. Math.}, 17:672--680, 1969.

\bibitem{DorsognaChuangBertozziChayes:2006}
M. Dorsogna, Yao-Li Chuang, A. Bertozzi, L~Chayes.
\newblock Self-propelled particles with soft-core interactions: Patterns,
  stability, and collapse.
\newblock {\em Physical review letters}, 96:104302, 04 2006.


\bibitem{Finechel:1979}
Neil Fenichel.
\newblock Geometric singular perturbation theory for ordinary differential
  equations.
\newblock {\em J. Differential Equations}, 31(1):53--98, 1979.

\bibitem{GantmacherMatrices}
F.~R. Gantmacher.
\newblock {\em The theory of matrices. {V}ols. 1, 2}.
\newblock Translated by K. A. Hirsch. Chelsea Publishing Co., New York, 1959.

%\bibitem{GearKaper:2005}
%C.~W. Gear, T.~J. Kaper, I.~G. Kevrekidis, and A.~Zagaris.
%\newblock Projecting to a slow manifold: Singularly perturbed systems and legacy codes.
%\newblock {\em Siam Journal on Applied Dynamical Systems}, 4(3):711--732, 2005.

\bibitem{Hale:1997}
Jack~K. Hale.
\newblock Diffusive coupling, dissipation, and synchronization.
\newblock {\em J. Dynam. Differential Equations}, 9(1):1--52, 1997.

\bibitem{Haraux:1998}
A.~Haraux M.~A. Jendoubi.
\newblock Convergence of solutions of second-order gradient-like systems with
  analytic nonlinearities.
\newblock {\em Journal of Differential Equations}, 144(DE973393):313--320,
  1998.

\bibitem{Haraux:book}
A. Haraux, M.~A. Jendoubi.
\newblock {\em The convergence problem for dissipative autonomous systems}.
\newblock SpringerBriefs in Mathematics. Springer, Cham; BCAM Basque Center for
  Applied Mathematics, Bilbao, 2015.
\newblock Classical methods and recent advances, BCAM SpringerBriefs.

\bibitem{Rybicki:2003}
Norimichi Hirano and S\l~awomir Rybicki.
\newblock Existence of limit cycles for coupled van der {P}ol equations.
\newblock {\em J. Differential Equations}, 195(1):194--209, 2003.

\bibitem{Kolokolnikov:2011}
T. Kolokolnikov, H. Sun, D. Uminsky, A.~L. Bertozzi.
\newblock Stability of ring patterns arising from two-dimensional particle
  interactions.
\newblock {\em Phys. Rev. E}, 84:015203, Jul 2011.

\bibitem{CHUANG200730}
Yao li~Chuang, Maria~R. D’Orsogna, Daniel Marthaler, Andrea~L. Bertozzi, and
  Lincoln~S. Chayes.
\newblock State transitions and the continuum limit for a 2d interacting,
  self-propelled particle system.
\newblock {\em Physica D: Nonlinear Phenomena}, 232(1):33--47, 2007.

\bibitem{Lienard:1928}
A.~Li{\'{e}}nard.
\newblock {\em Etude des oscillations entretenues}.
\newblock Revue g{\'{e}}n{\'{e}}rale de l'{\'{e}}lectricit{\'{e}}, 1928.

\bibitem{Lins:1977}
A.~Lins, W.~de~Melo, and C.~C. Pugh.
\newblock On li{\'{e}}nard's equation.
\newblock In {\em Lecture Notes in Mathematics}, pages 335--357. Springer
  Berlin Heidelberg, 1977.

\bibitem{Mach:2006}
Robert Mach and Frank Schweitzer.
\newblock Modeling vortex swarming in daphnia.
\newblock {\em Bulletin of Mathematical Biology}, 69(2):539--562, aug 2006.


\bibitem{Szwaykowska:2016}
Klementyna Szwaykowska, Ira~B. Schwartz, Luis~Mier y~Teran~Romero,
  Christoffer~R. Heckman, Dan Mox, and M.~Ani Hsieh.
\newblock Collective motion patterns of swarms with delay coupling: Theory and
  experiment.
\newblock {\em Physical Review E}, 93(3), mar 2016.


\bibitem{Ebeling:2001}
W.~Ebeling and F.~Schweitzer.
\newblock Swarms of particle agents with harmonic interactions.
\newblock {\em Theory in Biosciences}, 120/3-4:207--224, 2001.

\bibitem{Verhulst:Book}
Ferdinand Verhulst.
\newblock {\em Nonlinear differential equations and dynamical systems}.
\newblock Universitext. Springer-Verlag, Berlin, second edition, 1996.
\newblock Translated from the 1985 Dutch original.



\end{thebibliography}
%\vspace{1cm}

%\end{document}

%%%%%%%%%%%%%%%%%%%%%%%%%%%%%%%%%%%%%%%%%%%%%%%%%%%%%
%%%%%%%%%%%%%%%%%%%%%%%%%%%%%%%%%%%%%%%%%
%%%%%%%%%%%%%%%%%%%%%%%%%%%%%%%%%%%%%%%%%%%%%%

The views expressed in this article are those of the authors and do not reflect the official policy or position of the United States Naval Academy, Department of the Navy, Department of Defense, or the U.S. Government

%%%%%%%%%%%%%%%%%%%%%%%%%%%%%%%%%%%%%%%%%%%%%%%
\end{document}